\newtheorem{theorem}{Theorem}[section]
\newtheorem{lemma}[theorem]{Lemma}
\newtheorem{proposition}[theorem]{Proposition}
\newtheorem{corollary}[theorem]{Corollary}
\newtheorem{problem}{Problem}[section]
\newtheorem{remark}[theorem]{Remark}
\newtheorem{definition}[theorem]{Definition}
\newtheorem{assumption}{Assumption}[section]
\date{\displaydate{date}}
\numberwithin{equation}{section}
\newcommand\reallywidehat[1]{%
\savestack{\tmpbox}{\stretchto{%
  \scaleto{%
    \scalerel*[\widthof{\ensuremath{#1}}]{\kern.1pt\mathchar"0362\kern.1pt}%
    {\rule{0ex}{\textheight}}
  }{\textheight}%
}{2.4ex}}%
\stackon[-6.9pt]{#1}{\tmpbox}%
}
\begin{document}

\title{\LARGE \bf A phase-field approach for detecting cavities via a Kohn-Vogelius type functional}

\author[1]{Andrea Aspri}
\affil[1]{Department of Mathematics, Università degli Studi di Milano}

\vspace{5mm}

\date{}

\maketitle

\thispagestyle{plain}
\pagestyle{plain}

\let\thefootnote\relax\footnotetext{
AMS 2020 subject classifications: 35R30, 65N21, 74G75

Key words and phrases: Kohn-Vogelius functional, cavity, phase-field, linear elasticity, primal dual active set method

\thanks{}

}
\begin{abstract}
We deal with the geometrical inverse problem of the shape reconstruction of cavities in a bounded linear isotropic medium by means of boundary data. The problem is addressed from the point of view of optimal control: the goal is to minimize in the class of Lipschitz domains a Kohn-Vogelius type functional with a perimeter regularization term which penalizes the perimeter of the cavity to be reconstructed. To solve numerically the optimization problem, we use a phase-field approach, approximating the perimeter functional with a Modica-Mortola relaxation and modeling the cavity as an inclusion with a very small elastic tensor. We provide a detailed analysis showing the robustness of the algorithm through some numerical experiments.     
\end{abstract}

\maketitle


\section{Introduction}\label{sec:introduction}
The main focus of this paper is to propose an efficient and robust algorithm, based on a phase-field approach, to address the geometrical inverse problem of identification of cavities contained in an elastic isotropic body, utilizing tractions and displacement boundary measurements. We work in the framework of linear elasticity, representing the medium by a bounded domain $\Omega\subset \mathbb{R}^d$, where $d=2,3$. 
These kinds of inverse problems appear in non-destructive testing techniques used by industry to detect defects, voids, cracks in a medium, which can appear during manufacturing processes, and to evaluate properties of materials and structures without causing damage to the medium (\cite{AmmBreEliGarJossKan15,BouMulSahTan21,DizLauBen16,JavHol19,Lang19}). For instance, non-destructive testing methods are particularly important in the fields where the new techniques of additive manufacturing are replacing traditional methods of metal manufacture \cite{EilUrb18,KurMarIya17,NgoKasImbNguHui18,TroWelElv18}. \\
Let $\Omega$ be a bounded domain, with $\partial\Omega:=\Sigma_N\cup\Sigma_D$, where $\Sigma_D$ is closed. Given a bounded Lipschitz domain $C$, with $C\subset \Omega$, we consider the following boundary value problem
\begin{equation}\label{eq:neumann_intro}
    \begin{cases}
    \textrm{div}(\mathbb{C}_0\widehat{\nabla}u_N)=0 & \textrm{in}\ \Omega\setminus C\\
    (\mathbb{C}_0\widehat{\nabla}u_N)n=0 & \textrm{on}\ \partial C\\
    (\mathbb{C}_0\widehat{\nabla}u_N)\nu=g & \textrm{on}\ \Sigma_N\\
    u_N=0 &\textrm{on}\ \Sigma_D,
    \end{cases}
\end{equation}
where $n, \nu$ are the unit outer normal vectors to $C, \Sigma_N$, respectively, $\mathbb{C}_0$ is a fourth order elastic tensor, uniformly bounded, strongly convex and satisfying minor and major symmetries, and $\widehat{\nabla} u_N$ represents the deformation tensor. We assume that $g\in L^2(\Sigma_N)$.

Given $\mathbb{C}_0$, $C$, and $g$, the forward or direct problem corresponds to find the solution $u_N$ in $\Omega$.
On the contrary, the inverse problem consists in the identification of the cavity $C$ given $\mathbb{C}_0$, and $g$, and making use of the additional boundary measurements represented by the displacement vector $f=u_N\lfloor_{\Sigma_N}$. It has been proved that uniqueness for cavities detection holds in the class of Lipschitz domains \cite{MorRos04,ABR18} while stability estimates (of logarithmic type) have been proved for more regular cavities, precisely assuming a-priori $C^{1,\alpha}$ regularity, with $0<\alpha\leq 1$ (\cite{MorRos04}). Similar stability estimates hold also in the case of elastic inclusions (\cite{MR16}).\\
Due to the very weak stability estimates, identification of cavities (and also inclusions) from boundary measurements is an ill-posed problem which needs of regularization techniques to be solved. In \cite{Rondi11,RinRon11} a phase-field method has been applied for the reconstruction of cracks and cavities in the case of the conductivity equation.
Recently, a phase-field approach has been proposed in \cite{DecEllSty16} and then applied also in \cite{BerRatVer18} for the identification of inclusions in the framework of a linear and a semilinear elliptic equation, respectively. The same approach has been also extended to the detection of cavities in the case of a semilinear elliptic equation in \cite{BCP2021} and of linear elasticity in \cite{AspBerCavRocVer22}. All these papers propose an algorithm rephrasing the inverse problem as an optimization procedure, where the goal is to minimize a suitable misfit functional, defined on the boundary of $\Omega$, with the addition of a regularization term which involves a relaxation of the perimeter of the domain to be reconstructed. 

A similar point of view is utilized in this paper, i.e., we apply again a phase-field method but this time for the minimization of a Kohn-Vogelius type functional (\cite{KohVog87}), that is an energy-gap functional, regularized with a penalization on the perimeter of the cavity. To the author's knowledge, phase field methods have never been applied, in the inverse problems context, to Kohn-Vogelius type functionals. More precisely, we consider the minimization of the following functional
\begin{equation*}
     J_{reg}(C)=J_{KV}(C)+\alpha\ \textrm{Per}(C),
\end{equation*}
where $\textrm{Per}(C)$ is the perimeter of the set $C$, $\alpha$ is the so-called regularization parameter, and $J_{KV}$ is a Kohn-Vogelius type functional defined as
\begin{equation*}
J_{KV}(C)=\frac{1}{2}\int_{\Omega\setminus C}\mathbb{C}_0\widehat{\nabla}{(u_N(C)-u_D(C))}:\widehat{\nabla}{(u_N(C)-u_D(C))}\, dx.
\end{equation*}
The states $u_N(C)$ and $u_D(C)$ are, respectively, solutions to the problem \eqref{eq:neumann_intro} and
\begin{equation*}
     \begin{cases}
    \textrm{div}(\mathbb{C}_0\widehat{\nabla}u_D)=0 & \textrm{in}\ \Omega\setminus C\\
    (\mathbb{C}_0\widehat{\nabla}u_D)n=0 & \textrm{on}\ \partial C\\
    u_D=f & \textrm{on}\ \Sigma_N\\
    u_D=0 &\textrm{on}\ \Sigma_D.
    \end{cases} 
\end{equation*}
The first part of the paper is devoted to prove the existence of minima for the functional $J_{reg}$. This result follows by showing the continuity of the functional $J_{KV}$ with respect to perturbations of $C$ in the Hausdorff metric which is obtained by means of the Mosco convergence \cite{BucBut05,BHSZ01,Gia04,HenPie18}.   

The second part of the paper concerns with the phase-field relaxation of the functional $J_{reg}$ in order to obtain a continuous and Frech\'et differentiable functional on a convex subset of $H^1(\Omega)$. More precisely, we adopt the same strategy applied in the optimization field (see, for example, \cite{BouCha03}): assuming $\mathbb{C}_0$ extended in the whole domain $\Omega$, we fill the cavity with a fictitious material with a very small elastic tensor, that is we define $\mathbb{C}_1=\delta \mathbb{C}_0$, where $\delta>0$ is a small parameter. Introducing a phase field variable $v$ which belongs to $H^1$ and takes values in the interval $[0,1]$, and using the Modica-Mortola relaxation of the perimeter, see \cite{Mod87}, we study the following functional
\begin{equation*}
J_{\delta,\varepsilon}(v):= J^{\delta}_{KV}(v)
+ \gamma \!\int_{\Omega}\Big( \varepsilon|\nabla v|^2 + \frac{1}{\varepsilon}v(1-v)\Big)\, dx,
\end{equation*}
where $\gamma$ is a suitable rescaling parameter, and $J^{\delta}_{KV}(v)$ is defined as
\begin{equation*}
J^{\delta}_{KV}(v)=\frac{1}{2}\int_{\Omega}\mathbb{C}^{\delta}(v)\widehat{\nabla}{(u^{\delta}_N(v)-u^{\delta}_D(v))}:\widehat{\nabla}{(u^{\delta}_N(v)-u^{\delta}_D(v))}\, dx,
\end{equation*}
where $\mathbb{C}^{\delta}(v)= \mathbb{C}_0 +  (\mathbb{C}_1- \mathbb{C}_0)v$, and the states $u^{\delta}_N(v)$ and $u^{\delta}_D(v)$ are solutions to 
\begin{equation*}
    \begin{cases}
    	\textrm{div}(\mathbb{C}^{\delta}(v) \widehat{\nabla} u^{\delta}_N(v)) &= 0 \qquad \text{in}\ \Omega,	\\
		(\mathbb{C}^{\delta}(v) \widehat\nabla u^{\delta}_N(v)) \nu &= g \qquad \text{on}\ \Sigma_N,\\
		u^{\delta}_N(v)&=0 \qquad \text{on}\ \Sigma_D,
    \end{cases}
\quad \textrm{and}\quad 
\begin{cases}
    	\textrm{div}(\mathbb{C}^{\delta}(v) \widehat{\nabla} u^{\delta}_D(v)) &= 0 \qquad \text{in}\ \Omega,	\\
		u^{\delta}_D(v) &= f \qquad \text{on}\ \Sigma_N,\\
		u^{\delta}_D(v)&=0 \qquad \text{on}\ \Sigma_D.
    \end{cases}
\end{equation*}
Note that, as $\varepsilon \to 0$, the phase-field variable $v$ attains mainly values close to $0$ and $1$, due to the fact that $\frac{1}{\varepsilon}\int_{\Omega}$v(1-v)\, dx prevails, with a smooth change between the two values in the zone around the interface of the cavity. 
The thickness of the interface is of order $\varepsilon$.
We show the existence of minima for the functional $J_{\delta,\varepsilon}$ and then we find the first necessary optimality condition for the relaxed optimization problem on which the reconstruction algorithm is based. In fact, we derive a robust iterative method similar to the one in \cite{DecEllSty16}, providing some numerical experiments. Numerically, we observe that minima of the functional $J_{\delta,\varepsilon}$ give an accurate approximation of the minima of $J_{reg}$, for $\delta$ and $\varepsilon$ sufficiently small. Analytical justifications of the convergence of the minima of $J_{\delta,\varepsilon}$ to those of $J_{reg}$ have not been studied in this paper. However, they will be subject of future researches.   

Kohn-Vogelius type functionals are widely applied in reconstruction algorithms for detection of cavities and inclusions, and for identification of unknown parameters \cite{EbeHarMefRez21,Kal18}. For instance, the following two groups of papers analyse the minimization of Kohn-Vogelius functionals, see \cite{BelMef13,BenJaiKhaZin17,BoucPeichSayeTouz17,CauDamKatTim13,DamHarPui19,MefZol15,Boch19} and \cite{CauConGod16,DeFLes15,GheHas21,HriHasAbdCho19,MenHriNov21}, making use of shape gradient and topological derivative techniques. We finally mention that the mathematical literature on reconstruction methods for elastic inclusions and cavities is always of remarkable interest thanks to the intimate connection with the industrial applications. Among the vast literature
on the subject, we refer the reader to \cite{AmeBurHac07,AmmBreEliGarJossKan15,Ammari,BC,AspBerSchMus20,CarRap08,DouCara20,EbeHar21,Ikehata11,Ikehata12,Kang03,Lesnic,CasFarGal12} to have an idea of the reconstruction techniques applied in this context.

The paper is organized as follows. 
In Section \ref{Notation.}, we recall some of the preliminaries definitions and results needed in the paper. In Section \ref{sec:prob formulation}, we introduce the mathematical problem and investigate continuity properties of the solution of the forward problems with respect to perturbations of the cavity in the Haussdorff topology. Then, we show the existence of minima for the Kohn-Vogelius functional $J_{reg}(C)$. In Section \ref{sec: phase field}, we approximate the cavity with an inclusion of small elastic tensor, studying the properties of the corresponding Kohn-Vogelius functional. Then, we introduce its phase-field relaxation, analyzing its differentiability properties and deriving the necessary optimality conditions related to the phase-field minimization problem. In Section \ref{sec:discretization}, we 
introduce the discretization of the forward problems and we propose the iterative reconstruction algorithm based on the optimality condition derived in the previous section, proving its convergence properties. In Section \ref{sec:algorithm}, we show the efficiency and robustness of our approach through some numerical experiments. In Section \ref{sec:conclusions}, we give some conclusions and provide some mathematical open problems.

\section{Notation, geometrical setting, and preliminaries}
\label{Notation.}
We introduce the needed notation and the functional setting for the analysis addressed in the paper.
From now on, we concentrate on the space dimensions $d=2,3$.

\subsubsection*{Notation.} We denote scalar quantities, points, and vectors in italics, e.g.  ${x}, {y}$ and ${u}, {v}$, 
and fourth-order tensors in blackboard face, e.g.  $\mathbb{A}, \mathbb{B}$. 

We denote with $\widehat{{A}}:=\tfrac{1}{2}\left({A}+{A}^T\right)$ the symmetric part of a second-order tensor ${A}$, where
${A}^T$ is the transpose matrix. 
Standard notation  is utilized for inner products for vectors and matrices, that is,
${u}\cdot {v}=\sum_{i} u_{i} v_{i}$, and  ${A}:
{B}=\sum_{i,j}a_{ij} b_{ij}$ ($B$ is a second-order tensor).
$|{A}|$ denotes the norm induced by the inner product on matrices:
	\begin{equation*}
		|{A}|=\sqrt{{A}:{A}}.
	\end{equation*}
\subsubsection*{Domains.} We need to represent locally a boundary as a graph of functions, hence we adopt the notation:  $\forall\, x\in\mathbb{R}^d$, we set $x=(x',x_d)$, where $x'\in\mathbb{R}^{d-1}$, $x_d\in\mathbb{R}$.
Given $r>0$, we denote by $B_{r}({x})\subset\mathbb{R}^d$ the set $B_{r}({x}):=\{(x',x_d)/\ |x'|^2+x_d^2<r^2\}$ and by $B'_{r}({x'})\subset\mathbb{R}^{d-1}$ the set $B'_{r}({x'}):=\{x'\in\mathbb{R}^{d-1}/\, |x'|^2<r^2\}$.
\begin{definition}[Lipschitz regularity of domains]\ \\
Let $\Omega$ be a bounded domain in $\mathbb{R}^d$. We say that a portion $\Sigma$ of $\partial \Omega$ is of Lipschitz class with constant $r_0$, $L_0$, if for any ${p}\in \Sigma$, there exists a rigid transformation of coordinates under which we have that ${p}$ is mapped to the origin  and
	\begin{equation*}
		\Omega\cap B_{r_0}({0})=\{{x}\in B_{r_0}({0})\, :\, x_d>\psi({x}')\},
		\end{equation*}
		where ${\psi}$ is a $C^{0,1}$ function on $B'_{r_0}({0})\subset \mathbb{R}^{d-1}$, such that
		\begin{equation*}
		\begin{aligned}
		{\psi}({0})&=0,\\
		\|{\psi}\|_{C^{0,1}(B'_{r_0}({0}))}&\leq L_0.
		\end{aligned}
		\end{equation*}
	\end{definition}
Given a bounded domain $\Omega$, we define 
\begin{equation}\label{eq:omega_d0}
\Omega^{d_0}=\{x\in\Omega\ / \ {dist}(x,\partial\Omega)\leq d_0\}. 
\end{equation}
In the sequel, we deal with the Hausdorff distance between two sets $\Omega_1$ and $\Omega_2$. For reader's convenience, we recall its definition:
    \begin{equation*}
        d_H(\Omega_1,\Omega_2)=\max\{\sup\limits_{x\in\Omega_1}\inf\limits_{y\in\Omega_2}\ dist(x,y),\sup\limits_{x\in\Omega_2}\inf\limits_{y\in\Omega_1}\ dist(x,y)\}.
    \end{equation*}

\subsubsection*{Functional setting.} Let $\Omega$ be a bounded domain. Given a function $v\in L^1(\Omega)$, we recall the definition of the total variation of $v$, that is  
\begin{equation}\label{eq:TV}
TV(v) = \sup \left\{ \int_\Omega v \text{div}(\varphi); \quad \varphi \in C^1_0(\Omega), \,  \|{\varphi}\|_{{L^\infty}(\Omega)}\leq 1 \right\},
\end{equation}
and of the $BV$ space, i.e., 
\begin{equation*}
BV(\Omega)= \{ v \in L^1(\Omega) \, : \, TV(v) < \infty \}.
\label{BV}
\end{equation*}
The BV space is endowed with its natural norm $\|{v}\|_{BV(\Omega)} = \|{v}\|_{L^1(\Omega)} + TV(v)$. \\
The perimeter of $\Omega$ is defined as
\begin{equation}\label{def:perimeter}
\textrm{Per}(\Omega)= TV(\chi_{\Omega}),
\end{equation}
where $\chi_{\Omega}$ is the characteristic function of the set $\Omega$.\\ Let $\partial\Omega:=\partial\Omega_0\cup\partial\Omega_1$. For the well-posedness of the boundary value problems involved in the paper, we need to utilize the following classical Sobolev spaces: 
\begin{equation*}
H^1_{0}(\Omega):=\{\upsilon\in H^1(\Omega): \upsilon\lfloor_{\partial\Omega}=0\},\qquad \textrm{and}\qquad H^1_{\partial\Omega_0}(\Omega):=\{\upsilon\in H^1(\Omega): \upsilon\lfloor_{\partial\Omega_0}=0\}, 
\end{equation*}
Finally, we recall the following inequalities, see for example \cite{AleMorRos08}.
\begin{proposition}\label{proposition1}
Let $\Omega$ be a bounded Lipschitz domain. For every $\upsilon\in H^1_{0}(\Omega)$ (or $\upsilon\in H^1_{\partial\Omega_0}(\Omega)$), there exists a positive constant $\overline{c}$, depending only on the Lipschitz constants of $\Omega$, such that
\begin{equation}\label{poincare}
\|\upsilon\|_{H^1(\Omega)} \leq \overline{c}\  \|\nabla \upsilon\|_{L^2(\Omega)}\qquad (\textrm{Poincar\'e inequality}).
\end{equation}
\begin{equation}\label{korn}
\|\nabla \upsilon \|_{L^2(\Omega)} \leq \overline{c}\ \|\widehat \nabla \upsilon\|_{L^2(\Omega)}\qquad (\textrm{Korn inequality}).
\end{equation}
\end{proposition}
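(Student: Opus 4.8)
The plan is to establish the two inequalities separately, in each case reducing the full-norm estimate to an $L^2$-control and then closing the argument by a compactness–contradiction scheme based on the Rellich–Kondrachov theorem, which applies because $\Omega$ is a bounded Lipschitz domain. For the Poincaré inequality \eqref{poincare} I would first note that it suffices to prove $\|\upsilon\|_{L^2(\Omega)} \leq C\,\|\nabla\upsilon\|_{L^2(\Omega)}$, since then $\|\upsilon\|_{H^1(\Omega)}^2 = \|\upsilon\|_{L^2(\Omega)}^2 + \|\nabla\upsilon\|_{L^2(\Omega)}^2 \leq (C^2+1)\|\nabla\upsilon\|_{L^2(\Omega)}^2$. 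To obtain the $L^2$ bound, argue by contradiction: if no constant works, there is a sequence $\upsilon_n$ in the relevant space (either $H^1_0(\Omega)$ or $H^1_{\partial\Omega_0}(\Omega)$) with $\|\upsilon_n\|_{L^2(\Omega)}=1$ and $\|\nabla\upsilon_n\|_{L^2(\Omega)}\to 0$. The sequence is bounded in $H^1(\Omega)$, so along a subsequence it converges weakly in $H^1(\Omega)$ and strongly in $L^2(\Omega)$ to some $\upsilon$ with $\|\upsilon\|_{L^2(\Omega)}=1$. Since $\nabla\upsilon_n\to 0$ in $L^2(\Omega)$, the weak limit satisfies $\nabla\upsilon=0$, so $\upsilon$ is constant on each connected component; stability of the trace under this convergence forces $\upsilon$ to vanish on the boundary portion (of positive measure), hence $\upsilon\equiv 0$, contradicting $\|\upsilon\|_{L^2(\Omega)}=1$.

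For the Korn inequality \eqref{korn}, the case $\upsilon\in H^1_0(\Omega)$ is elementary: by density it is enough to treat $\upsilon\in C^\infty_c(\Omega)$, expand $|\widehat\nabla\upsilon|^2 = \tfrac12|\nabla\upsilon|^2 + \tfrac12\sum_{i,j}\partial_i\upsilon_j\,\partial_j\upsilon_i$, and integrate the cross term by parts twice to rewrite it as $\tfrac12\int_\Omega(\textrm{div}\,\upsilon)^2\,dx\geq 0$, which yields $\|\nabla\upsilon\|_{L^2(\Omega)}\leq\sqrt{2}\,\|\widehat\nabla\upsilon\|_{L^2(\Omega)}$. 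For $\upsilon\in H^1_{\partial\Omega_0}(\Omega)$ the boundary terms no longer vanish, and I would instead invoke Korn's second inequality on the bounded Lipschitz domain $\Omega$, namely $\|\upsilon\|_{H^1(\Omega)}^2 \leq C\big(\|\widehat\nabla\upsilon\|_{L^2(\Omega)}^2 + \|\upsilon\|_{L^2(\Omega)}^2\big)$, and remove the lower-order term by a second compactness–contradiction argument. Assuming \eqref{korn} fails, take $\upsilon_n$ with $\|\nabla\upsilon_n\|_{L^2(\Omega)}=1$ and $\|\widehat\nabla\upsilon_n\|_{L^2(\Omega)}\to 0$; the Poincaré inequality already proved bounds $\upsilon_n$ in $H^1(\Omega)$, so along a subsequence $\upsilon_n\to\upsilon$ in $L^2(\Omega)$. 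Applying Korn's second inequality to the differences $\upsilon_n-\upsilon_m$ upgrades this to strong convergence in $H^1(\Omega)$; the limit satisfies $\widehat\nabla\upsilon=0$, so it is a rigid motion $a+Bx$ with $B$ skew-symmetric, which must vanish because $\upsilon=0$ on $\partial\Omega_0$. This contradicts $\|\nabla\upsilon\|_{L^2(\Omega)}=\lim_n\|\nabla\upsilon_n\|_{L^2(\Omega)}=1$.

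The main obstacle is Korn's second inequality itself on a general Lipschitz domain: unlike the $H^1_0(\Omega)$ case it is genuinely nontrivial, requiring either the Ne\v{c}as negative-norm estimate or a localization and boundary-flattening argument, and I would cite it from \cite{AleMorRos08} rather than reprove it. A minor point worth recording is the triviality of the kernel used above: a rigid motion $a+Bx$ vanishing on the positive-measure portion $\partial\Omega_0$ forces $B=0$ and $a=0$, since a skew-symmetric matrix has even rank while annihilating the tangential directions of such a boundary piece leaves its rank equal to zero.
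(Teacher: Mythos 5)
Your argument is correct, but note that the paper does not prove this proposition at all: it is stated as a recalled classical fact with the single line ``see for example \cite{AleMorRos08}'', so there is no internal proof to compare against. What you supply is the standard textbook route, and it holds up: the reduction of \eqref{poincare} to an $L^2$-bound followed by the Rellich compactness--contradiction argument, the integration-by-parts identity $\int_\Omega|\widehat\nabla\upsilon|^2 = \tfrac12\int_\Omega|\nabla\upsilon|^2+\tfrac12\int_\Omega(\mathrm{div}\,\upsilon)^2$ for the $H^1_0$ case of \eqref{korn}, and the removal of the lower-order term in Korn's second inequality by a second compactness argument with the rigid-motion kernel. Two small points deserve explicit mention if this were to replace the citation: the contradiction step for \eqref{poincare} in $H^1_{\partial\Omega_0}(\Omega)$ needs $\Omega$ connected (or $\partial\Omega_0$ meeting every component) and $\partial\Omega_0$ of positive surface measure --- consistent with the paper's standing hypothesis $|\Sigma_D|>0$ --- and the dependence of $\overline c$ ``only on the Lipschitz constants of $\Omega$'' is not delivered by a bare compactness argument, which yields a constant depending on the particular domain; uniformity over a Lipschitz class requires the quantitative versions in \cite{AleMorRos08}, which is presumably why the author cites that reference. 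You correctly flag that Korn's second inequality on a Lipschitz domain is the genuinely nontrivial ingredient and cite it rather than reprove it; that is the same division of labor the paper makes, just one layer deeper.
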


\section{Problem formulation - a Kohn-Vogelius approach}\label{sec:prob formulation}
In this paper, we deal with the geometrical inverse problem of identification of cavities in an elastic body $\Omega\subset \mathbb{R}^d$, with $d=2,3,$ by boundary measurements, given by tractions and displacements. The reconstruction procedure is based on a phase field approach applied to a Kohn-Vogelius type functional. \\
We assume that $\Omega$ is a bounded domain with Lipschitz boundary, with constants $r_0$ and $L_0$, and $\partial\Omega:=\Sigma_N\cup \Sigma_D$, where $|\Sigma_N|$, $|\Sigma_D|>0$ and $\Sigma_D$ closed. \\
In the presence of a cavity $C$, we consider the following mixed boundary value problem 
\begin{equation}\label{eq:neumann_problem}
    \begin{cases}
    \textrm{div}(\mathbb{C}_0\widehat{\nabla}u_N)=0 & \textrm{in}\ \Omega\setminus C\\
    (\mathbb{C}_0\widehat{\nabla}u_N)n=0 & \textrm{on}\ \partial C\\
    (\mathbb{C}_0\widehat{\nabla}u_N)\nu=g & \textrm{on}\ \Sigma_N\\
    u_N=0 &\textrm{on}\ \Sigma_D,
    \end{cases}
\end{equation}
where $n, \nu$ are the unit outer normal vectors to $C, \Sigma_N$, respectively, $\mathbb{C}_0$ is a fourth order elastic tensor, and $\widehat{\nabla} u_N$ represents the deformation tensor.
We introduce the needed assumptions on the elastic tensor, the cavity and the boundary data.

\begin{assumption}\label{ass:elasticity_tensor}
$\mathbb{C}_0=\mathbb{C}_0(x)$ is a fourth-order uniformly bounded tensor which satisfies minor and major symmetries, that is  
	\begin{equation*}
	(\mathbb{C}_0)_{ijkh}(x)=(\mathbb{C}_0)_{jikh}(x)=(\mathbb{C}_0)_{khij}(x),\qquad  \forall 1\leq i,j,k,h\leq d,\ \textrm{and}\ \ {x}\in\Omega.
	\end{equation*}
As usual, we also assume that $\mathbb{C}_0$ is uniformly strongly convex, that is, $\mathbb{C}_0$ defines a positive-definite quadratic form on symmetric matrices: for $\xi_0>0$
	\begin{equation*}
	\mathbb{C}_0(x)\widehat{A}:\widehat{A}\geq \xi_0 
	|\widehat{A}|^{2}, \qquad \textrm{a.e in}\,\, \Omega.
	\end{equation*}
\end{assumption}
\begin{remark}
The tensor $\mathbb{C}_0$ is assumed to be defined in $\Omega$, and not only in $\Omega\setminus C$, because we develop a reconstruction algorithm based on the strategy of filling the cavity with a fictitious elastic material, as is often applied in the context of optimization problems (see, for example, \cite{BouCha03}).
\end{remark}
\begin{assumption}\label{ass:neumann_data} 
The Neumann boundary data
\begin{equation}
g\in L^2(\Sigma_N).    
\end{equation}
\end{assumption}

The cavity, denoted by $C$, satisfies the following properties.
\begin{assumption}\label{ass:cavity}
Let $C\in\mathcal{C}$, where 
\begin{center}
$\mathcal{C}$:=\{$C \subset \overline{\Omega}:$ \hbox{ compact, simply connected} $\partial C\in C^{0,1}$ with constant $r_0$, $L_0$\, and ${dist}(C,\partial\Omega)\geq 2d_0>0$\}.
\end{center}
\end{assumption}
\begin{remark}\label{rem:compactness_sets}
The class $\mathcal{C}$ is compact with respect to the Hausdorff topology, see for example \cite[Theorem 2.4.10]{HenPie18}, and also \cite{DalMaso93,MeRon13}.
\end{remark}

We emphasize that the choice of Lipschitz regularity is a standard assumption in geometrical inverse problems related to identification of cavities, see for example \cite{MorRos03,MorRos04}. In fact, in this setting, it is possible to show uniqueness for the inverse problem. 
\begin{remark}
From now on, we will denote with $c$ any constant possibly depending on $\Omega$, $r_0$, $L_0$, $d$, $\xi_0$, $d_0$, $\overline{c}$, and on the uniform bounds of the elasticity tensor.
\end{remark}
Existence and uniqueness of a weak solution in $H^1_{\Sigma_D}(\Omega\setminus C)$ for the problem \eqref{eq:neumann_problem} is a classical result and follows from an application of the Lax-Milgram theorem to the following weak formulation of \eqref{eq:neumann_problem}: Find $u_N\in H^1_{\Sigma_D}(\Omega\setminus C)$ solution to
    \begin{equation}\label{eq:weak_form_prob_cavity}
        \int_{\Omega\setminus C} \mathbb{C}_0\widehat{\nabla}{u_N}:\widehat{\nabla}{\varphi}\, dx=\int_{\Sigma_N}g\cdot \varphi\, d\sigma(x), \qquad \forall \varphi\in H^1_{\Sigma_D}(\Omega\setminus C).
    \end{equation}
With the choice $\varphi=u_N$ in \eqref{eq:weak_form_prob_cavity}, and with an application of the strong convexity of the elastic tensor, {and the use of} Korn and Poincar\'e inequalities (see Proposition \ref{proposition1}) is simple to find that 
\begin{equation}\label{eq:coercivity}
\int_{\Omega\setminus C} \mathbb{C}_0\widehat{\nabla}{u_N}:\widehat{\nabla}{u_N}\, dx\geq c \|\widehat{\nabla}u_N\|^2_{L^2(\Omega\setminus C)}\geq c \|\nabla u_N\|^2_{L^2(\Omega\setminus C)}\geq c \|u_N\|^2_{H^1(\Omega\setminus C)}.
\end{equation}
At the same time, using a Cauchy-Schwarz inequality, we get
\begin{equation}\label{eq:continuity_functional}
    \Bigg|\int_{\Sigma_N}g\cdot u_N\, d\sigma(x)\Bigg| \leq \|g\|_{L^2(\Sigma_N)}\|u_N\|_{L^2(\Sigma_N)}\leq c \|g\|_{L^2(\Sigma_N)}\|u_N\|_{H^1(\Omega\setminus C)}.
\end{equation}
Putting together the estimates \eqref{eq:coercivity} and \eqref{eq:H1_estimate}, we find the standard $H^1-$estimate of the solution of \eqref{eq:weak_form_prob_cavity}, that is 
\begin{equation}\label{eq:H1_estimate}
    \|u_N\|_{H^1(\Omega\setminus C)}\leq c \|g\|_{L^2(\Sigma_N)}.
\end{equation}
Note that $u_N\lfloor_{\partial\Omega}\in H^{1/2}(\partial\Omega)$, with $u_N\lfloor_{\Sigma_D}=0$ (by hypothesis) and $u_N\lfloor_{\Sigma_N}=f$.
\\
In this paper, we address the following problem.
\begin{problem}\label{pb:problem}
    \textit{Let Assumptions \ref{ass:elasticity_tensor}, \ref{ass:neumann_data}, and \ref{ass:cavity} hold. Given the Neumann datum $g$ and the measured displacement $f$ on the boundary $\Sigma_N$, identify and reconstruct the cavity $C$.}
\end{problem}
To this aim, we transform Problem \ref{pb:problem} into the following optimization problem
\begin{equation}\label{eq:optmization_pb_KV}
    \min\limits_{C\in\mathcal{C}}\ J_{KV}(C):=\frac{1}{2}\int_{\Omega\setminus C}\mathbb{C}_0\widehat{\nabla}{(u_N(C)-u_D(C))}:\widehat{\nabla}{(u_N(C)-u_D(C))}\, dx,
\end{equation}
where $J_{KV}$ is a Kohn-Vogelius type functional, and the states $u_N(C)$ and $u_D(C)$ are, respectively, solutions to \eqref{eq:neumann_problem} and
\begin{equation}\label{eq:dirichlet_problem}
     \begin{cases}
    \textrm{div}(\mathbb{C}_0\widehat{\nabla}u_D)=0 & \textrm{in}\ \Omega\setminus C\\
    (\mathbb{C}_0\widehat{\nabla}u_D)n=0 & \textrm{on}\ \partial C\\
    u_D=f & \textrm{on}\ \Sigma_N\\
    u_D=0 &\textrm{on}\ \Sigma_D.
    \end{cases} 
\end{equation}
\begin{remark}
The Kohn-Vogelius functional \eqref{eq:optmization_pb_KV} can be rewritten as 
\begin{equation*}
    J_{KV}(C)=J_N(C)+J_D(C)+J_{ND}(C),
\end{equation*}
where 
\begin{align}
    J_N(C)&=\frac{1}{2}\int_{\Omega\setminus C}\mathbb{C}_0\widehat{\nabla}{u_N(C)}:\widehat{\nabla}{u_N(C)}\, dx,\label{def:JN}\\ 
    J_D(C)&=\frac{1}{2}\int_{\Omega\setminus C}\mathbb{C}_0\widehat{\nabla}{u_D(C)}:\widehat{\nabla}{u_D(C)}\, dx\label{def:JD},\\
    J_{ND}(C)&=-\int_{\Omega\setminus C}\mathbb{C}_0\widehat{\nabla}{u_N(C)}:\widehat{\nabla}{u_D(C)}\, dx=-\int_{\Sigma_N}g\cdot f\, d\sigma(x)=:\overline{J}_{ND},\label{def:JND}
\end{align}
where in the last functional an integration by parts to $J_{ND}$ has been applied. Note that $\overline{J}_{ND}$ is a constant term independent on $C$. Therefore
\begin{equation}\label{eq:decomposition_KV}
    J_{KV}(C)=J_N(C)+J_D(C)+\overline{J}_{ND}.
\end{equation}

\end{remark}
\begin{remark}
Note that the weak variational solution $u_D$ of the problem \eqref{eq:dirichlet_problem} is the minimizer of the following energy functional
\begin{equation}\label{eq:energy_dirichlet}
    E(u,C)=\frac{1}{2}\int_{\Omega\setminus C}\mathbb{C}_0\widehat{\nabla}{u}:\widehat{\nabla}{u}\, dx,  
\end{equation}
that is 
\begin{equation}\label{eq:minimum_energy_dirichlet}
    J_D(C)=\min\limits_{\substack{u\in H^1(\Omega\setminus C)\\
    u=f\  \textrm{on}\  \Sigma_N, u=0\  \textrm{on}\  \Sigma_D}} E(u,C).
\end{equation}
\end{remark}
As a standard approach in inverse problems, we add to the functional in \eqref{eq:optmization_pb_KV} a regularization term (Tikhonov regularization). In this context, we introduce a penalization on the perimeter of the cavity $C$. Therefore, given a regularization parameter $\alpha>0$, we consider
\begin{equation}\label{eq:optmization_pb_reg}
    \min\limits_{C\in\mathcal{C}}\ J_{reg}(C):=J_{KV}(C)+\alpha\ \textrm{Per}(C),
\end{equation}
where $\textrm{Per}(C)$ is the perimeter of the set $C$ (see definition \eqref{def:perimeter}).

\subsection{Continuity of $J_{KV}$ with respect to $C$}
Thanks to the decomposition of $J_{KV}$ as in \eqref{eq:decomposition_KV}, in this section we show that the functionals $J_N(C)$ and $J_D(C)$ are continuous with respect to perturbations of the cavity $C$ in the Hausdorff distance. For, we apply the Mosco convergence which is one of the techniques applied in the optimization context to show continuity of solutions with respect to perturbations of domains. The continuity property of these functionals is the key step to prove the existence of a minimum for problem \eqref{eq:optmization_pb_reg}.\\  
First, we recall the definition of Mosco convergence with some of its properties. For more details, we refer the reader to \cite{BucBut05,HenPie18,Gia04,MeRon13} and references therein. 
\begin{definition}\label{def:Mosco conv}
Let $H$ be  a  
Hilbert space,  and $G_k$ a  sequence  of  closed subspaces of $H$, and $G$ a subset of $H$. It is said that $G_k$ converges in the sense of Mosco to $G$ if the following assertions hold
\begin{enumerate}[(i)]
    \item If $u_{k_j}\in G_{k_j}$ is such that $u_{k_j}\rightharpoonup u$ in $H$, then $u\in G$;\label{eq:Mosco_condition1}
    \item $\forall u\in G$, $\exists u_k\in G_k$ such that $u_k\to u$ in $H$.\label{eq:Mosco_condition2}
\end{enumerate}
\end{definition}
Given $\Omega$ and $\Omega\setminus C$, we can identify the Sobolev space $H^1(\Omega\setminus C)$ with a closed subspace of $L^2(\Omega,\mathbb{R}^{d+d^2})$ through the map
    \begin{equation}\label{eq:identification}
    \begin{aligned}
        H^{1}(\Omega\setminus C) &\hookrightarrow L^2(\Omega,\mathbb{R}^{d+d^2})\\
       u &\to (u,{\partial_{l}u_i}),\qquad \forall i,l=1,\cdots,d,
    \end{aligned}
    \end{equation}
where ${u}$ and ${\nabla u}$ are the extension to zero in $C$ of $u$ and $\nabla u$, respectively. 
Denoting by $C_k$ a sequence of sets in $\mathcal{C}$ {(see Assumption \ref{ass:cavity})} and with $u_k$ a sequence of functions in $H^1(\Omega\setminus C_k)$, we have that the same identification holds for $\Omega\setminus C_k$, extending $u_k$ and $\nabla u_k$ to zero in $C_k$. \\
Mosco convergence holds in the class of uniform Lipschitz domains, see for example \cite{BucBut05,Che75}, in fact we have the following result.
\begin{proposition}\label{th:Mosco_conv}
Let us assume that $C_k, C\subset \Omega$ belong to the class $\mathcal{C}$. If $C_k\to C$ in the Hausdorff metric, then $H^1(\Omega\setminus C_k)$ converges to $H^1(\Omega\setminus C)$ in the sense of Mosco. 
\end{proposition}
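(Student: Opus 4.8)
The plan is to verify directly the two defining properties of Mosco convergence in Definition \ref{def:Mosco conv}, with $H=L^2(\Omega,\mathbb{R}^{d+d^2})$ and $G_k,G$ the images of $H^1(\Omega\setminus C_k)$ and $H^1(\Omega\setminus C)$ under the identification \eqref{eq:identification}. The whole argument rests on two geometric consequences of Hausdorff convergence inside the class $\mathcal{C}$ (Assumption \ref{ass:cavity}), whose uniform Lipschitz character is precisely what makes them available (see Remark \ref{rem:compactness_sets} and \cite{Che75,HenPie18}): first, $C_k\to C$ in $d_H$ upgrades to convergence in measure, i.e.\ $\chi_{C_k}\to\chi_C$ in $L^1(\Omega)$, equivalently $|C_k\triangle C|\to 0$; second, every compact set $K\subset\Omega\setminus C$ is eventually contained in $\Omega\setminus C_k$, since $\operatorname{dist}(K,C)>0$ forces $K\cap C_k=\emptyset$ as soon as $d_H(C_k,C)<\operatorname{dist}(K,C)$. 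I would isolate these two facts at the outset and use them repeatedly.

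For condition (i), suppose $u_{k_j}\in H^1(\Omega\setminus C_{k_j})$, extended by zero to $C_{k_j}$, satisfies $(u_{k_j},\nabla u_{k_j})\rightharpoonup (u,U)$ in $L^2(\Omega,\mathbb{R}^{d+d^2})$. I would first identify $U$ with $\nabla u$ on $\Omega\setminus C$: for any $\varphi\in C^\infty_c(\Omega\setminus C)$ the compact-inclusion property gives $\operatorname{supp}\varphi\subset\Omega\setminus C_{k_j}$ for $j$ large, so the integration-by-parts identity $\int_\Omega u_{k_j}\,\partial_l\varphi_i=-\int_\Omega(\partial_l u_{k_j,i})\varphi$ holds, and passing to the weak limit yields $\int_\Omega u\,\partial_l\varphi_i=-\int_\Omega U_{il}\varphi$; hence $U=\nabla u$ on $\Omega\setminus C$ and $u|_{\Omega\setminus C}\in H^1(\Omega\setminus C)$. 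It then remains to check that $u$ and $U$ vanish a.e.\ on $C$: for $\psi\in C^\infty_c(\operatorname{int} C)$, since $u_{k_j}=0$ on $C_{k_j}$ one has $\int_\Omega u_{k_j}\psi=\int_{(\Omega\setminus C_{k_j})\cap\operatorname{int}C}u_{k_j}\psi$, and Cauchy--Schwarz together with $|(\Omega\setminus C_{k_j})\cap\operatorname{int}C|\le|C_k\triangle C|\to 0$ and the $L^2$-boundedness of $(u_{k_j})$ forces this to tend to $0$; by weak convergence $\int_\Omega u\psi=0$, so $u=0$ a.e.\ on $C$, and the same argument applies to each $U_{il}$. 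Thus $(u,U)$ is exactly the image of $u|_{\Omega\setminus C}\in H^1(\Omega\setminus C)$, i.e.\ $u\in G$.

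For the recovery property (ii), I would exploit that no boundary constraint on $\Sigma_D$ is built into these spaces. Given $u\in H^1(\Omega\setminus C)$, let $w=Eu\in H^1(\Omega)$ be a Sobolev extension (available for the \emph{fixed} Lipschitz set $C$, so no uniformity is needed), and set $u_k:=w|_{\Omega\setminus C_k}\in H^1(\Omega\setminus C_k)$. Under \eqref{eq:identification} the zero-extensions of $u_k$ and $u$ are $w\chi_{\Omega\setminus C_k}$ and $w\chi_{\Omega\setminus C}$, whence
\[
\|u_k-u\|_{L^2(\Omega)}^2+\|\nabla u_k-\nabla u\|_{L^2(\Omega)}^2=\int_{C_k\triangle C}\big(|w|^2+|\nabla w|^2\big),
\]
and absolute continuity of the integral together with $|C_k\triangle C|\to 0$ gives $u_k\to u$ strongly in $L^2(\Omega,\mathbb{R}^{d+d^2})$, proving (ii) and completing both Mosco conditions. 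The main obstacle is not the functional-analytic passage to the limit but the geometric input: establishing rigorously that Hausdorff convergence in $\mathcal{C}$ yields convergence in measure and the compact-inclusion property, which is exactly where the uniform Lipschitz bounds (rather than bare Hausdorff convergence) are essential; everything else reduces to weak/strong convergence bookkeeping and a standard extension operator.
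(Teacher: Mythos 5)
Your argument is correct, but it is worth noting that the paper does not actually prove Proposition \ref{th:Mosco_conv} at all: it states the result and defers entirely to the literature on Mosco convergence for uniformly Lipschitz (uniform cone condition) domains, citing \cite{BucBut05,Che75}. You instead give a direct, self-contained verification of the two conditions of Definition \ref{def:Mosco conv} under the identification \eqref{eq:identification}, and your reduction is sound: the compact-exhaustion property follows from bare Hausdorff convergence of the compact sets $C_k$ (if $x\in C_k\cap K$ then $\operatorname{dist}(x,C)\le d_H(C_k,C)$), the localization of the weak limit via integration by parts against $\varphi\in C^\infty_c(\Omega\setminus C)$ is standard, the vanishing of $(u,U)$ on $C$ uses only $|C_k\triangle C|\to 0$ together with the uniform $L^2$ bound (and $|\partial C|=0$ for Lipschitz $C$), and the recovery sequence built from a Sobolev extension across the fixed Lipschitz boundary $\partial C$ converges strongly by absolute continuity of the integral. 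What your proof buys is transparency about where the hypotheses enter: everything except one step is elementary, and the one nontrivial geometric input --- that Hausdorff convergence within the uniformly Lipschitz class $\mathcal{C}$ of Assumption \ref{ass:cavity} upgrades to convergence in measure, $|C_k\triangle C|\to 0$ --- is exactly the content of the compactness theorems the paper cites (cf.\ Remark \ref{rem:compactness_sets} and \cite[Theorem 2.4.10]{HenPie18}); this implication fails for general compact sets, so your explicit flagging of it as the essential non-elementary ingredient is the right emphasis. In short, your route makes explicit, at the cost of one borrowed geometric lemma, what the paper treats as a black box.
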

\begin{remark}
Mosco convergence holds also for Sobolev subspaces of $H^1(\Omega\setminus C)$, such as $H^1_0(\Omega\setminus C)$ and $H^1_{\Sigma_D}(\Omega\setminus C)$, see for example \cite{BucBut05}.
\end{remark}
We can now prove the continuity of the functionals $J_N(C)$ and $J_D(C)$ with respect to perturbations of the cavity $C$. 
\begin{proposition}\label{prop:continuity_neumann}
Consider a sequence $C_k\in\mathcal{C}$ converging to $C$ in the Hausdorff metric (cf. Remark \ref{rem:compactness_sets}). 
Let $u_{N,k}:=u_N(C_k)\in H^1_{\Sigma_D}(\Omega\setminus C_k)$ and $u_N:=u_N(C)\in H^1_{\Sigma_D}(\Omega\setminus C)$ be solutions of \eqref{eq:weak_form_prob_cavity} in $\Omega\setminus C_k$ and $\Omega\setminus C$, respectively. Then
\begin{equation}\label{eq:continuity_neumann}
    \int_{\Omega\setminus C}\mathbb{C}_0{\widehat{\nabla}{u_{N,k}}}:{\widehat{\nabla}{u_{N,k}}}\, dx \longrightarrow \int_{\Omega\setminus C}\mathbb{C}_0{\widehat{\nabla}{u_N}}:{\widehat{\nabla}{u_N}}\, dx\,\qquad \textrm{as}\ k\to+\infty  
\end{equation}
\end{proposition}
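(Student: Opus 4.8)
The plan is to run the classical Mosco-convergence argument, exactly in the spirit alluded to after the statement, combining weak compactness with the variational characterization of the Neumann state to upgrade weak convergence to convergence of the energies.

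First I would establish uniform a priori bounds: by the estimate \eqref{eq:H1_estimate} the sequence satisfies $\|u_{N,k}\|_{H^1(\Omega\setminus C_k)}\leq c\|g\|_{L^2(\Sigma_N)}$ with $c$ independent of $k$. Via the identification \eqref{eq:identification}, extending $u_{N,k}$ and $\nabla u_{N,k}$ by zero inside $C_k$, the extended pairs form a bounded sequence in $L^2(\Omega,\mathbb{R}^{d+d^2})$, so after passing to a subsequence we may assume $u_{N,k}\rightharpoonup w$ weakly there. By the first Mosco condition and Proposition \ref{th:Mosco_conv} (together with the remark that Mosco convergence persists for the subspace $H^1_{\Sigma_D}$), the weak limit $w$ belongs to $H^1_{\Sigma_D}(\Omega\setminus C)$.

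Next I would identify $w$ with $u_N$. Fix a test function $\varphi\in H^1_{\Sigma_D}(\Omega\setminus C)$; by the second Mosco condition there is a recovery sequence $\varphi_k\in H^1_{\Sigma_D}(\Omega\setminus C_k)$ with $\varphi_k\to\varphi$ strongly. Inserting $\varphi_k$ into the weak formulation \eqref{eq:weak_form_prob_cavity} on $\Omega\setminus C_k$ and passing to the limit---using weak convergence of $\widehat\nabla u_{N,k}$ tested against the strongly convergent $\widehat\nabla\varphi_k$ on the left, and the fact that $\Sigma_N$ stays at distance at least $2d_0$ from every cavity so that the boundary term is stable on the right---shows that $w$ solves \eqref{eq:weak_form_prob_cavity} on $\Omega\setminus C$. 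By uniqueness $w=u_N$, and since the limit does not depend on the extracted subsequence, the whole sequence converges weakly.

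The heart of the matter, and the step I expect to be the main obstacle, is passing from this weak convergence to convergence of the quadratic energies, since weak convergence alone does not transfer to a quadratic quantity. Here I would exploit the variational characterization: $u_{N,k}$ minimizes $\mathcal{E}_N^{(k)}(u):=\tfrac12\int_{\Omega\setminus C_k}\mathbb{C}_0\widehat\nabla u:\widehat\nabla u-\int_{\Sigma_N}g\cdot u$ over $H^1_{\Sigma_D}(\Omega\setminus C_k)$. For the lower bound, the quadratic term is convex and hence weakly lower semicontinuous (using the strong convexity in Assumption \ref{ass:elasticity_tensor}), while the linear boundary term passes to the limit, giving $\liminf_k\mathcal{E}_N^{(k)}(u_{N,k})\geq\mathcal{E}_N(u_N)$. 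For the matching upper bound, I would take the strong recovery sequence $\widetilde u_k\to u_N$ furnished again by the second Mosco condition and use minimality, $\mathcal{E}_N^{(k)}(u_{N,k})\leq\mathcal{E}_N^{(k)}(\widetilde u_k)\to\mathcal{E}_N(u_N)$. Combining the two yields convergence $\mathcal{E}_N^{(k)}(u_{N,k})\to\mathcal{E}_N(u_N)$; since along the Neumann state the weak formulation with test function $u_{N,k}$ reduces the quadratic form to the boundary pairing $\int_{\Sigma_N}g\cdot u_{N,k}$, and the traces on $\Sigma_N$ converge strongly in $L^2$ (the measurements sit away from the cavities), one concludes the desired convergence \eqref{eq:continuity_neumann}. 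The discrepancy between integrating over $\Omega\setminus C_k$ and over $\Omega\setminus C$ is harmless because $|C_k\triangle C|\to0$ within the uniform Lipschitz class while the integrands stay uniformly $L^2$-bounded.
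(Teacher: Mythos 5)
Your proposal is correct and follows essentially the same route as the paper: uniform $H^1$ bounds, extraction of a weak limit via the identification with a closed subspace of $L^2(\Omega,\mathbb{R}^{d+d^2})$, identification of the limit with $u_N$ through the two Mosco conditions, and finally the reduction of the quadratic form to the boundary pairing $\int_{\Sigma_N}g\cdot u_{N,k}$ combined with strong $L^2(\Sigma_N)$ convergence of the traces (which the paper justifies by weak continuity of the trace operator and compactness of $H^{1/2}(\Sigma_N)\hookrightarrow L^2(\Sigma_N)$, a cleaner justification than your parenthetical remark, though your localization near $\partial\Omega$ also works). The only real difference is that your intermediate liminf/limsup argument via minimality of the energy is redundant: once you have the identity $\int_{\Omega\setminus C_k}\mathbb{C}_0\widehat{\nabla}u_{N,k}:\widehat{\nabla}u_{N,k}=\int_{\Sigma_N}g\cdot u_{N,k}$ and the trace convergence, the conclusion follows directly, which is exactly the paper's argument.
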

\begin{proof}
The first part of the proof of this proposition is completely analogous to the one in \cite[Theorem 2.5]{AspBerCavRocVer22}. After proving that $u_{N,k}\to u_N$ in $L^2(\Sigma_N)$, it follows that, as $k\to +\infty$, 
\begin{equation*}
\begin{aligned}
    \int_{\Omega\setminus C_k} \mathbb{C}_0\widehat{\nabla}{u_{N,k}}:\widehat{\nabla}{u_{N,k}}\, dx&=\int_{\Sigma_N}g\cdot u_{N,k}\, d\sigma(x)\longrightarrow\\ 
    &\longrightarrow \int_{\Sigma_N}g\cdot u_{N}\, d\sigma(x)=\int_{\Omega\setminus C} \mathbb{C}_0\widehat{\nabla}{u_{N}}:\widehat{\nabla}{u_{N}}\, dx,
\end{aligned}
\end{equation*}
that is the assertion.
\end{proof}

To prove the continuity of the functional $J_D(C)$, we use the fact that, by hypothesis, the cavity $C$ does not touch the boundary of $\Omega$, see Assumption \ref{ass:cavity}. Therefore, we define a partition of the unity of $\Omega$, that is two functions $\phi, \psi\in C^{\infty}(\overline{\Omega})$, such that $\phi(x)+\psi(x)=1$, for all $x\in\overline{\Omega}$, and 
\begin{equation*}
    \phi=
    \begin{cases}
    1 & \textrm{in}\ \Omega^{d_0/2}\\
    0 & \textrm{in}\ \Omega\setminus \Omega^{d_0} 
    \end{cases}
\qquad\textrm{and}\qquad 
    \psi=
    \begin{cases}
    1 & \textrm{in}\ \Omega\setminus \Omega^{d_0}\\
    0 & \textrm{in}\ \Omega^{d_0/2}. 
    \end{cases}
\end{equation*}
We need to consider a lifting operator of the trace of the solution of the Dirichlet problem on the boundary of $\Omega$. Specifically, since $u_{D{\lfloor_{\partial\Omega}}}\in H^{1/2}(\partial\Omega)$ (by construction) and the trace operator has a right continuous inverse on Lipschitz domains (see \cite{Gri11}), we construct
\begin{equation}\label{eq:uf}
u^f\in H^1(\Omega)\quad \textrm{such that}\quad u^f_{\lfloor_{\partial\Omega}}:=u_{D{\lfloor_{\partial\Omega}}}.   
\end{equation}
\begin{proposition}\label{prop:continuity_dirichlet}
Let $C_k, C\in\mathcal{C}$ such that $C_k\to C$ in the Hausdorff metric. Then, for all $u\in H^1(\Omega\setminus C)$ such that $u_{\lfloor_{\partial\Omega\setminus \partial C}}=u^f_{\lfloor_{\partial\Omega}}$ there exists a sequence $u_k\in H^1(\Omega\setminus C_k)$ such that $u_{k{\lfloor_{\partial\Omega\setminus \partial C_k}}}=u^f_{\lfloor_{\partial\Omega}}$ and 
\begin{equation*}
    \int_{\Omega}\mathbb{C}_0{\widehat{\nabla}{u_k}}: {\widehat{\nabla}{u_k}}\, dx \longrightarrow \int_{\Omega}\mathbb{C}_0{\widehat{\nabla}{u}}: {\widehat{\nabla}{u}}\, dx,\qquad \textrm{as}\ k\to +\infty.
\end{equation*}
\end{proposition}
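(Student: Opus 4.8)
The plan is to produce the recovery sequence by combining the Mosco recovery furnished by Proposition \ref{th:Mosco_conv} with the partition of unity $\phi,\psi$, the latter serving only to restore the prescribed boundary trace $u^f$ on $\partial\Omega$. First I would apply the second Mosco condition \eqref{eq:Mosco_condition2} of Definition \ref{def:Mosco conv} to $u\in H^1(\Omega\setminus C)$ itself, through the identification \eqref{eq:identification}: since $C_k\to C$ in the Hausdorff metric, Proposition \ref{th:Mosco_conv} yields a sequence $\tilde u_k\in H^1(\Omega\setminus C_k)$ with $\tilde u_k\to u$ strongly in $L^2(\Omega,\mathbb{R}^{d+d^2})$, that is $\tilde u_k\to u$ and $\nabla\tilde u_k\to\nabla u$ in $L^2(\Omega)$, with the usual zero extensions inside $C_k$ and $C$.

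The obstruction is that this $\tilde u_k$ need not satisfy $\tilde u_k\lfloor_{\partial\Omega}=u^f$: Proposition \ref{th:Mosco_conv} controls only the $H^1$ norm and does not preserve the inhomogeneous Dirichlet datum. To repair this I would set
\begin{equation*}
u_k:=\phi\,u+\psi\,\tilde u_k .
\end{equation*}
The decisive use of Assumption \ref{ass:cavity} is that $\mathrm{dist}(C_k,\partial\Omega)\geq 2d_0$, so every cavity lies in $\{\phi=0\}=\Omega\setminus\Omega^{d_0}$; hence $\phi u$ is supported in the fixed collar $\Omega^{d_0}$, which meets neither $C$ nor any $C_k$, and is therefore a cavity-independent $H^1$ function. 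Consequently $u_k\in H^1(\Omega\setminus C_k)$, and since $\phi\equiv 1$ while $\psi\equiv 0$ near $\partial\Omega$, one gets $u_k\lfloor_{\partial\Omega}=u\lfloor_{\partial\Omega}=u^f$, so $u_k$ is admissible.

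It then remains to verify the energy convergence. I would check directly, case by case on $\Omega\setminus(C\cup C_k)$, $C\setminus C_k$, $C_k\setminus C$ and $C\cap C_k$, keeping track of the zero extensions, that the identity $u_k-u=\psi(\tilde u_k-u)$ holds a.e.\ on $\Omega$. Then, by the Leibniz rule,
\begin{equation*}
\widehat\nabla(u_k-u)=\widehat{\nabla\psi\otimes(\tilde u_k-u)}+\psi\,\widehat\nabla(\tilde u_k-u),
\end{equation*}
and both terms tend to $0$ in $L^2(\Omega)$ because $\psi$ and $\nabla\psi$ are bounded while $\tilde u_k-u\to 0$ and $\nabla(\tilde u_k-u)\to 0$ in $L^2(\Omega)$ by the Mosco convergence. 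Hence $\widehat\nabla u_k\to\widehat\nabla u$ in $L^2(\Omega)$, and the uniform boundedness of $\mathbb{C}_0$ (Assumption \ref{ass:elasticity_tensor}) upgrades this to the convergence of the quadratic energies, which is the claim. The main obstacle is precisely the passage described in the second paragraph: recognizing that Mosco's recovery sequence alone cannot carry the boundary datum, and that the gluing through $\phi,\psi$ — legitimate only thanks to the separation $\mathrm{dist}(C_k,\partial\Omega)\geq 2d_0$ — is what reconciles the correct trace with the energy convergence. Once the algebraic identity $u_k-u=\psi(\tilde u_k-u)$ is in hand, the remaining estimates are routine.
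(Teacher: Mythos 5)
Your proof is correct and follows essentially the same route as the paper: a Mosco recovery sequence combined with the partition of unity $\phi,\psi$ to restore the boundary trace $u^f$, followed by strong $L^2$ convergence of the (zero-extended) gradients and the uniform boundedness of $\mathbb{C}_0$. The only difference is cosmetic --- the paper inserts a second Mosco recovery $v_k\to(u-u^f)\phi$ in $H^1_0(\Omega\setminus C_k)$ and sets $u_k=\psi u_k^*+v_k+u^f\phi$, whereas you use the cavity-independent piece $\phi u$ directly; since $\mathrm{dist}(C_k,\partial\Omega)\geq 2d_0$ keeps the collar $\Omega^{d_0}$ clear of every cavity, both constructions are legitimate and lead to the same conclusion.
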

\begin{proof}
The proof of the proposition is based on the application of the Mosco convergence. 
First, note that, thanks to Definition \ref{def:Mosco conv}, point \eqref{eq:Mosco_condition2}, of Mosco convergence, for all $u\in H^1(\Omega\setminus C)$ there exists a sequence $u^*_k\in H^1(\Omega\setminus C_k)$ such that $u^*_k \to u$ in $L^2(\Omega)$. At the same time, for all $u\in H^1(\Omega\setminus C)$, we have that $(u-u^f)\phi\in H^1_0(\Omega\setminus C)$, then there exists, applying again the Mosco convergence, a sequence $v_k\in H^1_0(\Omega\setminus C_k)$ such that $v_k\to (u-u^f)\phi$ in $L^2(\Omega)$. Therefore, we define
\begin{equation*}
    u_k:=\psi u^*_k + v_k + u^f\phi.
\end{equation*}
Note that $u_{k\lfloor_{\partial\Omega}}=u^f_{\lfloor_{\partial\Omega}}$. Moreover, thanks to the convergence of $u^*_k$ and $v_k$, we get that 
\begin{equation*}
    u_k \longrightarrow u\ \textrm{in}\ L^2(\Omega),\qquad \textrm{as}\ k\to +\infty.
\end{equation*}
Therefore, by construction, we have that ${\nabla u_k} \to {\nabla u}$ in $L^2(\Omega)$ (strongly), as $k\to +\infty$. Hence, thanks to the definition of the deformation tensor, it follows that ${\widehat{\nabla}{u_k}} \to {\widehat{\nabla}{u}}$ in $L^2(\Omega)$ (strongly). Finally, using the boundedness of the elastic tensor, see Assumption \ref{ass:elasticity_tensor}, and the strong convergence of the deformation tensor, the assertion of the proposition follows. 
\end{proof}
We can now prove the existence of a minimum for the functional \eqref{eq:optmization_pb_reg}.
\begin{theorem}
For all $\alpha>0$, the minimum problem \eqref{eq:optmization_pb_reg} related to the regularized functional $J_{reg}$ has at least one solution.
\end{theorem}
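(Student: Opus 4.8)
The plan is to argue by the direct method of the calculus of variations. First I would observe that $J_{reg}$ is bounded from below on $\mathcal{C}$: by strong convexity of $\mathbb{C}_0$ both $J_N(C)$ and $J_D(C)$ are nonnegative, the cross term $\overline{J}_{ND}$ in the decomposition \eqref{eq:decomposition_KV} is a fixed constant independent of $C$, and $\textrm{Per}(C)\ge 0$, so that $J_{reg}(C)\ge \overline{J}_{ND}$ for every $C\in\mathcal{C}$. Let $m:=\inf_{C\in\mathcal{C}}J_{reg}(C)\in\mathbb{R}$ and pick a minimizing sequence $C_k\in\mathcal{C}$ with $J_{reg}(C_k)\to m$. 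Since the class $\mathcal{C}$ is compact in the Hausdorff topology (Remark \ref{rem:compactness_sets}), up to a subsequence we have $C_k\to C$ in the Hausdorff metric with $C\in\mathcal{C}$. It then suffices to show that $J_{reg}$ is sequentially lower semicontinuous along this sequence, i.e. $J_{reg}(C)\le\liminf_k J_{reg}(C_k)=m$; since $m$ is the infimum, this forces $J_{reg}(C)=m$ and exhibits $C$ as a minimizer.

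Using the decomposition \eqref{eq:decomposition_KV}, it is enough to handle the three terms separately. The Neumann term is continuous along $C_k\to C$ by Proposition \ref{prop:continuity_neumann}, so in particular $J_N(C)=\lim_k J_N(C_k)$. For the Dirichlet term I would prove the full continuity $J_D(C_k)\to J_D(C)$ by a two-sided argument. The upper estimate $\limsup_k J_D(C_k)\le J_D(C)$ is exactly the content of Proposition \ref{prop:continuity_dirichlet}: applying it to $u=u_D(C)$, the minimizer in \eqref{eq:minimum_energy_dirichlet}, produces admissible competitors $u_k$ on the domains $\Omega\setminus C_k$ whose energies converge to $E(u_D(C),C)=J_D(C)$, whence $J_D(C_k)\le E(u_k,C_k)\to J_D(C)$. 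For the matching lower estimate $\liminf_k J_D(C_k)\ge J_D(C)$ I would take the minimizers $u_{D,k}:=u_D(C_k)$, which satisfy a uniform $H^1$ bound as in \eqref{eq:H1_estimate}; extracting a weak limit $u^*$ and invoking the first Mosco condition \eqref{eq:Mosco_condition1} gives $u^*\in H^1(\Omega\setminus C)$, while a trace argument as in the proof of Proposition \ref{prop:continuity_neumann} (here the hypothesis $\textrm{dist}(C,\partial\Omega)\ge 2d_0$ keeps the boundary data away from the interface) shows that $u^*$ carries the correct Dirichlet datum $u^f$. Weak lower semicontinuity of the quadratic energy $E(\cdot,C)$ then yields $J_D(C)\le E(u^*,C)\le\liminf_k J_D(C_k)$, and combining the two estimates gives $J_D(C_k)\to J_D(C)$.

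It remains to treat the perimeter term. Since the domains $C_k$ lie in the uniformly Lipschitz class $\mathcal{C}$, Hausdorff convergence $C_k\to C$ upgrades to $L^1$ convergence of the characteristic functions $\chi_{C_k}\to\chi_{C}$, and the lower semicontinuity of the total variation under $L^1$ convergence (recall $\textrm{Per}(C)=TV(\chi_C)$ from \eqref{def:perimeter}) gives $\textrm{Per}(C)\le\liminf_k\textrm{Per}(C_k)$. Putting the three facts together and using superadditivity of $\liminf$,
\begin{equation*}
J_{reg}(C)=J_N(C)+J_D(C)+\overline{J}_{ND}+\alpha\,\textrm{Per}(C)\le\liminf_k\big(J_N(C_k)+J_D(C_k)+\overline{J}_{ND}+\alpha\,\textrm{Per}(C_k)\big)=m,
\end{equation*}
which concludes the argument.

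I expect the main obstacle to be the lower-semicontinuity half of the continuity of $J_D$: one must verify that the weak $H^1$ limit $u^*$ of the Dirichlet minimizers not only lands in $H^1(\Omega\setminus C)$ via Mosco but actually retains the prescribed boundary value $u^f$ on $\partial\Omega$, so that it is an admissible competitor in \eqref{eq:minimum_energy_dirichlet}. A secondary technical point is justifying that Hausdorff convergence in $\mathcal{C}$ implies $L^1$ convergence of the indicator functions, which is what upgrades the perimeter term from merely bounded to lower semicontinuous.
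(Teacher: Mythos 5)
Your proposal is correct and follows the same architecture as the paper's proof: direct method, compactness of $\mathcal{C}$ in the Hausdorff topology (Remark \ref{rem:compactness_sets}), the decomposition \eqref{eq:decomposition_KV}, continuity of $J_N$ via Proposition \ref{prop:continuity_neumann}, Mosco convergence for the Dirichlet term, and lower semicontinuity of the perimeter. The one genuine difference is your treatment of $J_D$, and it is to your credit. The paper deduces $J_D(C)\leq\liminf_k J_D(C_k)$ directly from Proposition \ref{prop:continuity_dirichlet}; but that proposition only produces, for the minimizer $u=u_D(C)$ of \eqref{eq:minimum_energy_dirichlet}, a sequence of \emph{admissible competitors} $u_k$ on $\Omega\setminus C_k$ with $E(u_k,C_k)\to E(u,C)$, and since $J_D(C_k)\leq E(u_k,C_k)$ this yields only $\limsup_k J_D(C_k)\leq J_D(C)$ --- the opposite inequality from the one needed for lower semicontinuity. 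Your two-sided argument supplies the missing half: taking the actual minimizers $u_D(C_k)$, using their uniform $H^1$ bound, the first Mosco condition \eqref{eq:Mosco_condition1} to place the weak limit $u^*$ in $H^1(\Omega\setminus C)$, a trace argument (valid because $\mathrm{dist}(C_k,\partial\Omega)\geq 2d_0$) to confirm that $u^*$ is admissible, and weak lower semicontinuity of the quadratic energy. This closes a step the paper's proof glosses over, and as you note it is the only delicate point; the remaining ingredients (boundedness below, $L^1$ convergence of the characteristic functions of uniformly Lipschitz sets under Hausdorff convergence, superadditivity of $\liminf$) are handled exactly as in the paper.
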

\begin{proof}
Let $C_k\in\mathcal{C}$ be a minimizing sequence for $J_{reg}$. Thanks to Remark \ref{rem:compactness_sets}, there exists a subsequence that converges to $C\in\mathcal{C}$. We show that $C$ is in fact a minimum for $J_{reg}$. \\
Firstly, from Proposition \ref{prop:continuity_dirichlet}, given $u$ solution of \eqref{eq:minimum_energy_dirichlet}, with cavity $C$, there exists a sequence, that we denote by $u_{k}$ such that
\begin{equation*}
    E(u,C)\leq \liminf\limits_{k\to+\infty} E(u_{k},C_k),
\end{equation*}
where $E$ is defined in \eqref{eq:energy_dirichlet}, hence, $J_D(C)\leq \liminf\limits_{k\to+\infty}J_D(C_k)$.
Secondly, by the lower semicontinuity of the perimeter functional (see, for example, \cite[Section 5.2.1, Theorem 1]{EvaGar15}), it holds
\begin{equation*}
    \textrm{Per}(C)\leq \liminf_{k\rightarrow\infty}\textrm{Per}(C_k).
\end{equation*}
Then, by Propositions \ref{prop:continuity_neumann}, we get
\begin{equation*}
\begin{aligned}
    J_{reg}(C)&=\overline{J}_{ND}+ J_N(C)+J_D(C)+\alpha\textrm{Per}(C) \\
    &\leq \overline{J}_{ND}+ \liminf_{k\rightarrow\infty}J_N(C_k)+ \liminf_{k\rightarrow\infty}J_D(C_k)+\alpha \liminf_{k\rightarrow\infty}\textrm{Per}(C_k)\\
    &\leq \liminf_{k\rightarrow\infty}(\overline{J}_{ND}+J_N(C_k)+J_D(C_k)+\alpha \textrm{Per}(C_k))=\lim_{k\rightarrow\infty}J_{reg}(C_k)=\inf_{C^{\sharp}\in \mathcal{C}}J_{reg}(C^{\sharp}). 
\end{aligned}
\end{equation*}
\end{proof}
\section{A phase field approach}\label{sec: phase field}
The aim of this section is to describe a phase-field relaxation of the functional $J_{reg}$, see \eqref{eq:optmization_pb_reg}, introduced in the previous section in order to overcome, from a numerical point of view, the non-differentiability of the functional.\\
To be more specific, we consider in \eqref{eq:optmization_pb_reg} an approximation of the perimeter by a Ginzburg-Landau type functional (\cite{BouCha03}). This approach is widely applied in optimization procedures. We refer the reader to \cite{AS21,ABCHDRR20,BonCavFreRiv21,BGHFS14,BGHHR16,CRBHRA19,GHHHK15,GLNS21} and references therein for some recent papers on phase-field approaches. 

In the inverse problem context, applications of a phase-field approach have been proposed in \cite{DecEllSty16,RinRon11,Rondi11} for a linear elliptic equation, in \cite{BerRatVer18,BCP2021} for a semilinear elliptic equation, and very recently in \cite{AspBerCavRocVer22} for the Lam\'e system and in \cite{LamYou20} for a quasilinear Maxwell system.   

Firstly, we introduce the space
\begin{equation*}
X_{0,1}:=\{v\in BV(\Omega)\,:\, v=\chi_{C} \, \hbox{ a.e. in }\Omega, \,C\in {\mathcal C}\},    
\end{equation*}
where $\chi_C$ is the indicator function of $C$. The space $X_{0,1}$ is endowed with its natural norm $\|{v}\|_{BV(\Omega)} = \|{v}\|_{L^1(\Omega)} + TV(v)$ {(see Section \ref{Notation.})}.
Using this setting, Problem \eqref{eq:optmization_pb_reg} can be rephrased in the following way 
\begin{equation}\label{eq:optmization_pb_v}
    \min\limits_{v\in X_{0,1}}\ J(v):=J_{KV}(v)+\alpha\ TV(v),
\end{equation}

In the sequel, we often use the following result related to compactness properties of the space $BV$. 
\begin{remark}\label{compactness}
As a consequence of compactness properties of $BV(\Omega)$, \cite[Theorem 3.23]{AFP2000}, any uniformly bounded sequence in $X_{0,1}$ has a subsequence  converging in $L^1(\Omega)$ to an element in $X_{0,1}$. In fact, let $v_k$ a sequence uniformly bounded in $X_{0,1}$, then there exists, possibly up to a subsequence, $v\in BV(\Omega)$ such that 
\begin{equation*}
    v_k\to v\ \ \textrm{in}\ \ L^1(\Omega) \Rightarrow v_k\to v \ \ \textrm{a.e. in}\ \ \Omega.
\end{equation*}
Using the fact that $v_k$ attains values $0$ and $1$ only, it follows that $v\in X_{0,1}$.
\end{remark}

We can now regularize the problem by using a common approach in optimization procedures, that is of filling the voids (cavities) with a fictitious material with a small elastic tensor: Let 
$\delta>0$ be sufficiently small. We define
\begin{equation}\label{eq:elasticity_tensor_inclusion}
\mathbb{C}^{\delta}(v)= \mathbb{C}_0 +  (\mathbb{C}_1 - \mathbb{C}_0)v,\quad \textrm{with}\quad \mathbb{C}_1:=\delta\mathbb{C}_0.
\end{equation}
Tensors $\mathbb{C}_0$ and $\mathbb{C}_1$ correspond to the elastic tensors of $\Omega \setminus C$ and $C$, respectively. Moreover, $\mathbb{C}^{\delta}(v)$ is strongly convex by using the Assumption \ref{ass:elasticity_tensor}, and the fact that $\delta$ is positive and small.
 
Then, we consider the following optimization problem.
\begin{problem}
Given $\delta>0$, find
\begin{equation}\label{eq:optmization_pb_delta}
    \min\limits_{v\in X_{0,1}}\ J_{\delta}(v):=J^{\delta}_{KV}(v)+\alpha\ TV(v),
\end{equation}
\end{problem}
\noindent
where, recalling the definition \eqref{def:JND} of $\overline{J}_{ND}$,
\begin{equation}\label{def:JN and JD delta}
    \begin{aligned}
    J^{\delta}_{KV}(v)=\overline{J}_{ND}&+J^{\delta}_N(v)+J^{\delta}_D(v),\quad \textrm{and}\\
    J^{\delta}_N(v)=\frac{1}{2}\int_{\Omega}\mathbb{C}^{\delta}(v)\widehat{\nabla}{u^{\delta}_N(v)}:\widehat{\nabla}{u^{\delta}_N(v)}\, dx,
    &\qquad
    J^{\delta}_D(v)=\frac{1}{2}\int_{\Omega}\mathbb{C}^{\delta}(v)\widehat{\nabla}{u^{\delta}_D(v)}:\widehat{\nabla}{u^{\delta}_D(v)}\, dx.
\end{aligned}    
\end{equation}
Functions $u^{\delta}_N$ and $u^{\delta}_D$ are solutions to the following problems (similarly to \eqref{eq:neumann_problem} and \eqref{eq:dirichlet_problem})
\begin{equation}\label{pb:neumann_incl}
    \begin{cases}
    	\textrm{div}(\mathbb{C}^{\delta}(v) \widehat{\nabla} u^{\delta}_N(v)) &= 0 \qquad \text{in}\ \Omega,	\\
		(\mathbb{C}^{\delta}(v) \widehat\nabla u^{\delta}_N(v)) \nu &= g \qquad \text{on}\ \Sigma_N,\\
		u^{\delta}_N(v)&=0 \qquad \text{on}\ \Sigma_D,
    \end{cases}
\end{equation}
and
\begin{equation}\label{pb:dirichlet_incl}
    \begin{cases}
    	\textrm{div}(\mathbb{C}^{\delta}(v) \widehat{\nabla} u^{\delta}_D(v)) &= 0 \qquad \text{in}\ \Omega,	\\
		u^{\delta}_D(v) &= f \qquad \text{on}\ \Sigma_N,\\
		u^{\delta}_D(v)&=0 \qquad \text{on}\ \Sigma_D.
    \end{cases}
\end{equation}

Similarly to \eqref{eq:neumann_problem} and \eqref{eq:weak_form_prob_cavity}, the Neumann problem \eqref{pb:neumann_incl} has the following weak formulation: Find $u^{\delta}_N(v)\in H^1_{\Sigma_D}(\Omega)$ solution to
\begin{equation}\label{eq:weak_form_neumann_incl}
        \int_{\Omega} \mathbb{C}^{\delta}(v)\widehat{\nabla}{u^{\delta}_N(v)}:\widehat{\nabla}{\varphi}\, dx=\int_{\Sigma_N}g\cdot \varphi\, d\sigma(x), \qquad \forall \varphi\in H^1_{\Sigma_D}(\Omega).
    \end{equation}
Well-posedness in $H^1_{\Sigma_D}(\Omega)$ of the Neumann problem \eqref{eq:weak_form_neumann_incl} follows by the Lax-Milgram theorem similarly as showed for Problem \eqref{eq:weak_form_prob_cavity}, and in addition, analogously to \eqref{eq:H1_estimate}, we have
\begin{equation}\label{eq:H1est extended pb}
    \|u^{\delta}_N(v)\|_{H^1(\Omega)}\leq c \|g\|_{L^2_{\Sigma_N}}.
\end{equation}
The weak formulation of the Dirichlet problem can be obtained by using the lifting term $u^f$ defined in \eqref{eq:uf}. In fact, we can define $w^{\delta}_D(v):=u^{\delta}_D(v)-u^f$ and consider the following weak formulation: find $w^{\delta}_D(v)\in H^1_0(\Omega)$ solution to
\begin{equation}\label{eq:weak_form_dirichlet_incl}
    \int_{\Omega} \mathbb{C}^{\delta}(v)\widehat{\nabla}{w^{\delta}_D(v)}:\widehat{\nabla}{\psi}\, dx=-\int_{\Omega} \mathbb{C}^{\delta}(v)\widehat{\nabla}{u^{f}}:\widehat{\nabla}{\psi}\, dx,\qquad \forall\psi\in H^1_0(\Omega).
\end{equation}
Well-posedness in $H^1_0(\Omega)$ of the Dirichlet problem \eqref{eq:weak_form_dirichlet_incl} follows by the Lax-Milgram theorem,  analogously to \eqref{eq:weak_form_neumann_incl}, and in addition
\begin{equation*}
    \|w^{\delta}_D(v)\|_{H^1(\Omega)}\leq c \|f\|_{H^{1/2}(\Sigma_N)},\quad \textrm{hence}\quad \|u^{\delta}_D(v)\|_{H^1(\Omega)}\leq c \|f\|_{H^{1/2}(\Sigma_N)}.
\end{equation*}

The proof of the existence of a minimum for \eqref{eq:optmization_pb_delta} is based on a continuity result of the functional $J^{\delta}_{KV}$ in $X_{0,1}$.

\begin{remark}
We often use the following simplified notation similar to the one applied in the previous section to denote sequences: $u^{\delta}_{N,k}:= u^{\delta}_N(v_k), u^{\delta}_N := u^{\delta}_N(v), u^{\delta}_{D,k}:= u^{\delta}_D(v_k), \,  u^{\delta}_D := u^{\delta}_D(v), \mathbb{C}^{\delta}_k := \mathbb{C}^{\delta}(v_k), \,\mathbb{C}^{\delta} := \mathbb{C}^{\delta}(v)$.
\end{remark}
\begin{proposition}\label{continuity}
The maps $v\to J^{\delta}_N(v)$ and $v\to J^{\delta}_D(v)$ are continuous in the $L^1$ topology.
\end{proposition}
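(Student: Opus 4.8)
The plan is to fix a sequence $v_k \to v$ in $L^1(\Omega)$ with $v_k, v \in X_{0,1}$ and prove sequential continuity; since $L^1$ is metric this suffices. The decisive structural simplification compared with Section \ref{sec:prob formulation} is that the boundary value problems \eqref{eq:weak_form_neumann_incl} and \eqref{eq:weak_form_dirichlet_incl} are now posed on the \emph{fixed} domain $\Omega$, so no Mosco machinery is needed: only the coefficient $\mathbb{C}^\delta_k = \mathbb{C}_0 + (\mathbb{C}_1-\mathbb{C}_0)v_k$ varies. First I would pass to a subsequence so that $v_k \to v$ a.e.\ in $\Omega$; since $v_k, v$ take values in $\{0,1\}$ this yields $\mathbb{C}^\delta_k \to \mathbb{C}^\delta$ a.e., while Assumption \ref{ass:elasticity_tensor} together with $0<\delta<1$ gives a uniform bound and a uniform coercivity constant $\delta\xi_0$ for the whole family $\{\mathbb{C}^\delta_k\}$. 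Consequently the estimate \eqref{eq:H1est extended pb} and its Dirichlet analogue hold with $k$-independent constants, so $u^\delta_{N,k}$ and $u^\delta_{D,k}$ are bounded in $H^1(\Omega)$ and, up to a further subsequence, converge weakly in $H^1(\Omega)$ to some $u^*_N$ and $u^*_D$.

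Second, I would identify these weak limits with $u^\delta_N(v)$ and $u^\delta_D(v)$ by passing to the limit in the weak formulations. The one nontrivial point, and the main obstacle, is the coefficient term: writing $\mathbb{C}^\delta_k = \mathbb{C}^\delta + (\mathbb{C}_1-\mathbb{C}_0)(v_k-v)$, the part tested against the fixed function $\mathbb{C}^\delta\widehat\nabla\varphi \in L^2(\Omega)$ passes to the limit by weak convergence, whereas the remainder $\int_\Omega (\mathbb{C}_1-\mathbb{C}_0)(v_k-v)\widehat\nabla u^\delta_{N,k}:\widehat\nabla\varphi\,dx$ is the product of a \emph{merely weakly} convergent gradient with a coefficient tending to zero. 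I would control it by Hölder's inequality, bounding it by $c\,\|\widehat\nabla u^\delta_{N,k}\|_{L^2(\Omega)}\big(\int_\Omega |v_k-v|^2|\widehat\nabla\varphi|^2\,dx\big)^{1/2}$, where the first factor is uniformly bounded and the second tends to $0$ by dominated convergence, since $|v_k-v|^2|\widehat\nabla\varphi|^2\to 0$ a.e.\ and is dominated by $|\widehat\nabla\varphi|^2\in L^1(\Omega)$ (the corresponding term for \eqref{eq:weak_form_dirichlet_incl} is analogous, and the source $-\int_\Omega\mathbb{C}^\delta_k\widehat\nabla u^f:\widehat\nabla\psi\,dx$ converges by plain dominated convergence as $\widehat\nabla u^f$ is fixed). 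Since $u^\delta_{D,k}-u^f\in H^1_0(\Omega)$ and this closed subspace is weakly closed, the limit carries the correct boundary data, and uniqueness of the limit problems forces $u^*_N=u^\delta_N(v)$ and $u^*_D=u^\delta_D(v)$.

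Third, I would deduce convergence of the functional values. For the Neumann part it is cleanest to use the energy identity obtained by taking $\varphi=u^\delta_{N,k}$ in \eqref{eq:weak_form_neumann_incl}, giving $J^\delta_N(v_k)=\tfrac12\int_{\Sigma_N} g\cdot u^\delta_{N,k}\,d\sigma$; since the trace operator $H^1_{\Sigma_D}(\Omega)\to H^{1/2}(\Sigma_N)\hookrightarrow L^2(\Sigma_N)$ is compact, $u^\delta_{N,k}\to u^\delta_N$ strongly in $L^2(\Sigma_N)$ and the right-hand side converges to $\tfrac12\int_{\Sigma_N} g\cdot u^\delta_N\,d\sigma=J^\delta_N(v)$. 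For the Dirichlet part, which lacks a boundary source, I would instead upgrade to strong $H^1$ convergence: with $e_k:=u^\delta_{D,k}-u^\delta_D\in H^1_0(\Omega)$ one has $\int_\Omega\mathbb{C}^\delta_k\widehat\nabla u^\delta_{D,k}:\widehat\nabla e_k\,dx=0$ from the weak form, so coercivity gives $\delta\xi_0\|\widehat\nabla e_k\|^2_{L^2(\Omega)}\le -\int_\Omega\mathbb{C}^\delta_k\widehat\nabla u^\delta_D:\widehat\nabla e_k\,dx$, whose right-hand side tends to $0$ as the pairing of the strongly convergent $\mathbb{C}^\delta_k\widehat\nabla u^\delta_D$ with the weakly null $\widehat\nabla e_k$; Korn and Poincaré (Proposition \ref{proposition1}) then yield $u^\delta_{D,k}\to u^\delta_D$ in $H^1(\Omega)$, whence boundedness of the tensor and a.e.\ convergence of $\mathbb{C}^\delta_k$ give $J^\delta_D(v_k)\to J^\delta_D(v)$. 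Finally, I would remove the passage to subsequences by the standard Urysohn argument: every subsequence of $(v_k)$ has a further subsequence along which the above reasoning applies and the functional values converge to the same limits $J^\delta_N(v)$ and $J^\delta_D(v)$, hence the full sequences converge, which is the claimed $L^1$-continuity.
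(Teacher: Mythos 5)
Your proof is correct, and it reaches the conclusion by a noticeably different route than the paper, although both arguments ultimately rest on the same elementary fact: since only the coefficient $\mathbb{C}^{\delta}(v_k)=\mathbb{C}_0+(\mathbb{C}_1-\mathbb{C}_0)v_k$ varies and $v_k\to v$ a.e.\ (up to a subsequence) with values in $\{0,1\}$, the quantity $(\mathbb{C}^{\delta}_k-\mathbb{C}^{\delta})G$ tends to $0$ in $L^2(\Omega)$ for any fixed $G\in L^2(\Omega)$ by dominated convergence. The paper exploits this more directly: it subtracts the two weak formulations, tests the resulting error equation with the error itself, and obtains the quantitative bound $\|u^{\delta}_{N,k}-u^{\delta}_N\|_{H^1(\Omega)}\leq c\,\|(\mathbb{C}^{\delta}_k-\mathbb{C}^{\delta})\widehat{\nabla}u^{\delta}_N\|_{L^2(\Omega)}$ (and its Dirichlet analogue for $w^{\delta}_{D,k}-w^{\delta}_D$), so strong $H^1$ convergence of the states falls out in one step with no compactness extraction. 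You instead run the classical scheme of uniform bounds, weak subsequential limits, identification of the limit in the weak formulation (where your H\"older-plus-dominated-convergence control of $\int_\Omega(\mathbb{C}_1-\mathbb{C}_0)(v_k-v)\widehat{\nabla}u^{\delta}_{N,k}:\widehat{\nabla}\varphi\,dx$ is the right and necessary move, since the gradient there is only weakly convergent), and then an a posteriori upgrade: trace compactness plus the boundary energy identity for $J^{\delta}_N$, and a weak--strong pairing argument for strong $H^1$ convergence of $u^{\delta}_{D,k}$. Your version is longer but arguably more robust (it would survive in settings where a clean error equation is unavailable), and you are more careful than the paper on two minor points: the explicit uniform coercivity constant $\delta\xi_0$ for the whole family of tensors, and the final Urysohn subsequence argument that restores convergence of the full sequence after the a.e.\ extraction. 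No gaps.
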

\begin{proof}
Let $v_k$ be a sequence in $X_{0,1}$ be strongly convergent in $L^1(\Omega)$ to $v \in X_{0,1}$. We divide the proof into two cases.\ \\
\textit{Case 1: continuity of $J^{\delta}_N(v)$ with respect to $v$.}\\
Let us consider the weak formulation \eqref{eq:weak_form_neumann_incl} associated to $v$ and $v_k$, respectively, that is 
\begin{equation*}
\int_\Omega\mathbb{C}^{\delta} \widehat\nabla u^{\delta}_N: \widehat\nabla \varphi = \int_{\Sigma_N} g\cdot\varphi, \quad \forall \varphi \in H_{\Sigma_D}^1(\Omega),
\end{equation*}
\begin{equation*}
\int_\Omega\mathbb{C}^{\delta}_k \widehat\nabla u^{\delta}_{N,k}: \widehat\nabla \varphi = \int_{\Sigma_N} g\cdot\varphi, \quad \forall \varphi \in H_{\Sigma_D}^1(\Omega).
\end{equation*}
Subtracting the two equations, we get
\begin{equation*}
\int_\Omega\mathbb{C}^{\delta}_k \widehat\nabla (u^{\delta}_{N,k} -  u^{\delta}_N) : \widehat\nabla \varphi + \int_{\Omega}( \mathbb{C}^{\delta}_k - \mathbb{C}^{\delta}) \widehat\nabla u^{\delta}_N :\widehat\nabla \varphi =0, \quad \forall \varphi \in H_{\Sigma_D}^1(\Omega),
\end{equation*}
hence, choosing $\varphi=u^{\delta}_{N,k} -  u^{\delta}_N$, and applying the same argument to get $H^1-$ estimates as in \eqref{eq:H1est extended pb}, we find that
\begin{equation*}
 \|u^{\delta}_{N,k}-u^{\delta}_N\|_{H^1(\Omega)}\leq c \|(\mathbb{C}^{\delta}_k - \mathbb{C}^{\delta})\widehat{\nabla} u^{\delta}_N\|_{L^2(\Omega)}.
\end{equation*}
Note that $\mathbb{C}^{\delta}_k-\mathbb{C}^{\delta}=(\mathbb{C}_1-\mathbb{C}_0)(v_k-v)$. Since $v_k - v \to 0$ in $L^1(\Omega)$ as $k\rightarrow +\infty$, then, possibly up to a subsequence, $v_k -v \to 0$, a.e. in $\Omega$. Moreover, since the elastic tensor is uniformly bounded, see Assumption \ref{ass:elasticity_tensor}, the dominated convergence theorem implies that $\|(\mathbb{C}^{\delta}_k - \mathbb{C}^{\delta})\widehat{\nabla} u^{\delta}_N\|_{L^2(\Omega)}\to 0$. Therefore,
\begin{equation*}
    \|u^{\delta}_{N,k}-u^{\delta}_N\|_{H^1(\Omega)}\to 0,\, \qquad \textrm{as}\ k\to\infty.
\end{equation*}
From the trace theorem, it follows that $\|u^{\delta}_{N,k} -  u^{\delta}_N\|_{L^2(\Sigma_N)} \to 0$, as $k\rightarrow +\infty$, hence
\begin{equation*}
\begin{aligned}
   \int_\Omega\mathbb{C}^{\delta}_k \widehat\nabla u^{\delta}_{N,k} : \widehat\nabla u^{\delta}_{N,k}\, dx&= \int_{\Sigma_N}g\cdot u^{\delta}_{N,k}\, d\sigma(x)  \xrightarrow{k\rightarrow+\infty} \int_{\Sigma_N}g\cdot u^{\delta}_{N}\, d\sigma(x)\\
   &=\int_\Omega\mathbb{C}^{\delta} \widehat\nabla u^{\delta}_{N}: \widehat\nabla u^{\delta}_{N}\, dx,
\end{aligned}
\end{equation*}
that is the assertion.\\
\textit{Case 2: continuity of $J^{\delta}_D(v)$ with respect to $v$.}\\
The proof of the second case follows the same arguments applied to Case 1. Let us define $w^{\delta}_D=u^{\delta}_D-u^f$, where $u^f$ has been defined in \eqref{eq:uf}, solution to \eqref{eq:weak_form_dirichlet_incl}.
Writing the equation \eqref{eq:weak_form_dirichlet_incl} for $v_k$ and $v$, we get
\begin{equation*}
\begin{aligned}
    \int_{\Omega} \mathbb{C}^{\delta}_k\widehat{\nabla}{w^{\delta}_{D,k}}:\widehat{\nabla}\psi\, dx &=-\int_{\Omega} \mathbb{C}^{\delta}_k\widehat{\nabla}{u^{f}}:\widehat{\nabla}{\psi}\, dx,\qquad \forall\psi\in H^1_0(\Omega),\\
    \int_{\Omega} \mathbb{C}^{\delta}\widehat{\nabla}{w^{\delta}_{D}}:\widehat{\nabla}\psi\, dx&=-\int_{\Omega} \mathbb{C}^{\delta}\widehat{\nabla}{u^{f}}:\widehat{\nabla}{\psi}\, dx,\qquad \forall\psi\in H^1_0(\Omega).
\end{aligned}
\end{equation*}
Subtracting the two equations, and adding and subtracting suitable terms, we get 
\begin{equation}\label{eq:aux1}
    \int_{\Omega}\mathbb{C}^{\delta}_k\widehat{\nabla}{\left(w^{\delta}_{D,k}-w^{\delta}_{D}\right)}:\widehat{\nabla}{\psi}\, dx+\int_{\Omega}\left(\mathbb{C}^{\delta}_k-\mathbb{C}^{\delta}\right)\widehat{\nabla}{w^{\delta}_D}:\widehat{\nabla}{\psi}\, dx=-\int_{\Omega}\left(\mathbb{C}^{\delta}_k-\mathbb{C}^{\delta}\right)\widehat{\nabla}{u^f}:\widehat{\nabla}{\psi}\, dx.
\end{equation}
Choosing $\psi=w^{\delta}_{D,k}-w^{\delta}_{D}$ in the previous equation, we get, for the first integral term,
\begin{equation}\label{eq:aux1.1}
    \int_{\Omega}\mathbb{C}^{\delta}_k\widehat{\nabla}{\left(w^{\delta}_{D,k}-w^{\delta}_{D}\right)}:\widehat{\nabla}{\left(w^{\delta}_{D,k}-w^{\delta}_{D}\right)}\geq c \|w^{\delta}_{D,k}-w^{\delta}_{D}\|^2_{H^1(\Omega)}.
\end{equation}
For the other two integral terms in \eqref{eq:aux1}, we find that
\begin{equation}\label{eq:aux2}
\begin{aligned}
    -\int_{\Omega}\left(\mathbb{C}^{\delta}_k-\mathbb{C}^{\delta}\right)\widehat{\nabla}{w^{\delta}_D}:\widehat{\nabla}{\left(w^{\delta}_{D,k}-w^{\delta}_{D}\right)}\, dx&-\int_{\Omega}\left(\mathbb{C}^{\delta}_k-\mathbb{C}^{\delta}\right)\widehat{\nabla}{u^f}:\widehat{\nabla}{\left(w^{\delta}_{D,k}-w^{\delta}_{D}\right)}\, dx\\
    &=-\int_{\Omega}\left(\mathbb{C}^{\delta}_k-\mathbb{C}^{\delta}\right)\widehat{\nabla}{u^{\delta}_D}:\widehat{\nabla}{\left(w^{\delta}_{D,k}-w^{\delta}_{D}\right)}\, dx,
\end{aligned}
\end{equation}
where in the last equality we have used the fact that $u^{\delta}_D=w^{\delta}_D+u^f$. Therefore, estimating the term on the right-hand side of \eqref{eq:aux2}, we get
\begin{equation}\label{eq:aux3}
  \Big|\int_{\Omega}\left(\mathbb{C}^{\delta}_k-\mathbb{C}^{\delta}\right)\widehat{\nabla}{u^{\delta}_D}:\widehat{\nabla}{\left(w^{\delta}_{D,k}-w^{\delta}_{D}\right)}\, dx\Big| \leq c \|\left(\mathbb{C}^{\delta}_k-\mathbb{C}^{\delta}\right)\widehat{\nabla}{u^{\delta}_D}\|_{L^2(\Omega)} \|w^{\delta}_{D,k}-w^{\delta}_{D}\|_{H^1(\Omega)}.  
\end{equation}
Putting together \eqref{eq:aux1.1} and \eqref{eq:aux3}, we find
\begin{equation*}
    \|w^{\delta}_{D,k}-w^{\delta}_{D}\|_{H^1(\Omega)}\leq c \|\left(\mathbb{C}^{\delta}_k-\mathbb{C}^{\delta}\right)\widehat{\nabla}{u^{\delta}_D}\|_{L^2(\Omega)}.
\end{equation*}
As in Case 1, we have that the term on the right-hand side tends to zero.\\ 
Therefore $\|w^{\delta}_{D,k}- w^{\delta}_{D}\|_{H^1(\Omega)}\to 0$ hence, from the fact that $w^{\delta}_{D,k}=u^{\delta}_{D,k}-u^f$ and $w^{\delta}_{D}=u^{\delta}_{D}-u^f$, we have that $\|u^{\delta}_{D,k}- u^{\delta}_{D}\|_{H^1(\Omega)}\to 0$. \\
We can now show the continuity of the functional $J^{\delta}_D(v)$ with respect to $v$. Since
\begin{equation}
\begin{aligned}
        \int_{\Omega} &\mathbb{C}^{\delta}_k\widehat{\nabla}{w^{\delta}_{D,k}}:\widehat{\nabla}w^{\delta}_{D,k}\, dx \\
        &\hspace{1cm}=-\int_{\Omega} \mathbb{C}^{\delta}_k\widehat{\nabla}{u^{f}}:\widehat{\nabla}{w^{\delta}_{D,k}}\, dx
        \xrightarrow{k\rightarrow+\infty} -\int_{\Omega} \mathbb{C}^{\delta}\widehat{\nabla}{u^{f}}:\widehat{\nabla}{w^{\delta}_{D}}\\
        &\hspace{8cm}=\int_{\Omega} \mathbb{C}^{\delta}\widehat{\nabla}{w^{\delta}_{D}}:\widehat{\nabla}w^{\delta}_{D}\, dx, 
\end{aligned}
\end{equation}
from straightforward calculations, we find
\begin{equation*}
\begin{aligned}
    \int_{\Omega}\mathbb{C}^{\delta}_k\widehat{\nabla}{u^{\delta}_k}:\widehat{\nabla}{u^{\delta}_k}\, dx&=\int_{\Omega}\mathbb{C}^{\delta}_k\widehat{\nabla}{w^{\delta}_{D,k}} : \widehat{\nabla}{w^{\delta}_{D,k}}\, dx\\
    &\hspace{1cm}+2\int_{\Omega}\mathbb{C}^{\delta}_k \widehat{\nabla}{w^{\delta}_{D,k}}: \widehat{\nabla}{u^f}\, dx + \int_{\Omega}\mathbb{C}^{\delta}_k\widehat{\nabla}{u^f}:\widehat{\nabla}{u^f}\, dx\\
    &\hspace{1cm}\xrightarrow{k\rightarrow+\infty} \int_{\Omega}\mathbb{C}^{\delta}\widehat{\nabla}{w^{\delta}_{D}} : \widehat{\nabla}{w^{\delta}_{D}}\, dx\\
    &\hspace{1cm}+2\int_{\Omega}\mathbb{C}^{\delta} \widehat{\nabla}{w^{\delta}_{D}}: \widehat{\nabla}{u^f}\, dx + \int_{\Omega}\mathbb{C}^{\delta}\widehat{\nabla}{u^f}:\widehat{\nabla}{u^f}\, dx\\
    &\hspace{5cm}=\int_{\Omega}\mathbb{C}^{\delta}\widehat{\nabla}{u^{\delta}}:\widehat{\nabla}{u^{\delta}}\, dx, 
\end{aligned}
\end{equation*}
hence the continuity of the functional $J^{\delta}_D$. 
\end{proof}
{Using the same arguments in \cite{AspBerCavRocVer22}, it is straightforward to prove the following existence result.} 
\begin{proposition}
The functional $J_{\delta}(v)$ has at least a minimum $v \in X_{0,1}$.
\end{proposition}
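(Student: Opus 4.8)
The plan is to run the direct method of the calculus of variations, exploiting the continuity of $J^{\delta}_{KV}$ established in Proposition \ref{continuity} together with the compactness and lower-semicontinuity properties of $BV$. First I would observe that $J_{\delta}$ is bounded below: since $\mathbb{C}^{\delta}(v)$ is strongly convex, one has $J^{\delta}_N(v), J^{\delta}_D(v)\geq 0$, and $\overline{J}_{ND}$ is a fixed constant independent of $v$, so $J_{\delta}(v)\geq \overline{J}_{ND}$ for every $v\in X_{0,1}$. Hence the infimum is finite, and I may fix a minimizing sequence $v_k\in X_{0,1}$ with $J_{\delta}(v_k)\to \inf_{v\in X_{0,1}}J_{\delta}(v)$.

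Next I would extract the uniform $BV$ bound. From $\alpha\, TV(v_k)\leq J_{\delta}(v_k)-\overline{J}_{ND}$ and the convergence of $J_{\delta}(v_k)$, the total variations $TV(v_k)$ are uniformly bounded; moreover, since each $v_k$ takes only the values $0$ and $1$, one has $\|v_k\|_{L^1(\Omega)}\leq |\Omega|$. Therefore the sequence is uniformly bounded in $X_{0,1}$, and the compactness property recalled in Remark \ref{compactness} (cf. \cite[Theorem 3.23]{AFP2000}) yields a subsequence, still denoted $v_k$, with $v_k\to v$ in $L^1(\Omega)$ for some $v\in X_{0,1}$.

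I would then pass to the limit along this subsequence. By Proposition \ref{continuity}, the maps $v\mapsto J^{\delta}_N(v)$ and $v\mapsto J^{\delta}_D(v)$ are continuous in the $L^1$ topology, so $J^{\delta}_{KV}(v_k)\to J^{\delta}_{KV}(v)$; on the other hand, the lower semicontinuity of the total variation with respect to $L^1$ convergence (as used already for the existence result for $J_{reg}$, see \cite[Section 5.2.1, Theorem 1]{EvaGar15}) gives $TV(v)\leq \liminf_{k\to\infty}TV(v_k)$. Combining these,
\begin{equation*}
J_{\delta}(v)=J^{\delta}_{KV}(v)+\alpha\, TV(v)\leq \lim_{k\to\infty}J^{\delta}_{KV}(v_k)+\alpha\liminf_{k\to\infty}TV(v_k)=\liminf_{k\to\infty}J_{\delta}(v_k)=\inf_{v'\in X_{0,1}}J_{\delta}(v'),
\end{equation*}
so $v$ attains the infimum and is a minimizer.

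The argument is essentially routine once Proposition \ref{continuity} is available, and the lower semicontinuity of the perimeter is classical. The only genuinely load-bearing step — and the reason the perimeter penalization cannot be dispensed with — is the extraction of the uniform $BV$ bound on the minimizing sequence: it is precisely the term $\alpha\, TV(v)$ that supplies the $L^1$-compactness needed to pass to the limit and, crucially, to guarantee that the limit still takes only the values $0$ and $1$, i.e.\ still belongs to $X_{0,1}$. Without this regularization one would lose both the compactness and the characteristic-function structure of the limit.
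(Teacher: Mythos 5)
Your proof is correct and follows essentially the same route as the paper: the direct method, using the uniform $BV$ bound from the perimeter term, the $L^1$-compactness of $X_{0,1}$ from Remark \ref{compactness}, the $L^1$-continuity of $J^{\delta}_{KV}$ from Proposition \ref{continuity}, and the lower semicontinuity of the total variation. Your handling of the lower bound via $J_{\delta}(v)\geq \overline{J}_{ND}$ is in fact slightly more careful than the paper's, which writes $0\leq J_{\delta}(v_k)\leq 2C$ without accounting for the sign of the constant term.
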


\subsection{Modica-Mortola relaxation}
In this section, we consider a further regularization of the functional defined in \eqref{eq:optmization_pb_delta} in order to obtain a differentiable cost functional on a convex subspace of $H^1(\Omega)$, see for example \cite{DecEllSty16,BerRatVer18}.\\ 
Recalling \eqref{eq:omega_d0}, we define the convex set
\begin{equation*}
\mathcal{K} = \{ v \in H^1(\Omega): \ 0 \leq v(x) \leq 1 \ a.e. \ in \ \Omega, \ v(x) = 0 \ a.e. \ in \ \Omega^{d_0}\}.
\end{equation*}
For every $\varepsilon >0$, we replace the total variation term in \eqref{eq:optmization_pb_delta} with the Modica-Mortola relaxation (\cite{Mod87}), that is
\begin{problem}
Given $\delta,\varepsilon>0$, find
\begin{equation}
\min_{v \in \mathcal{K}} J_{\delta,\varepsilon}(v):= J^{\delta}_{KV}(v)
+ \gamma \!\int_{\Omega}\Big( \varepsilon|\nabla v|^2 + \frac{1}{\varepsilon}v(1-v)\Big)\, dx,\label{minrel}
\end{equation}
\end{problem}
\noindent
where $\gamma=\frac{4}{\pi}\alpha$, where $4/\pi=(2\int_0^1 \sqrt{v(1-v)}\, dv)^{-1}$ is a rescaling parameter (\cite{Alb96}) and $J^{\delta}_{KV}$ is defined in \eqref{def:JN and JD delta}.\\
The following result is completely analogous to the one in \cite{DecEllSty16,AspBerCavRocVer22,BerRatVer18}, {so we omit its proof.} 
\begin{proposition}\label{prop: existence_min_J_de}
For every $\varepsilon,\delta>0$, Problem \eqref{minrel} has a solution $v=v_{\delta,\varepsilon} \in {\cal K}$.
\end{proposition}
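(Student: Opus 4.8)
The plan is to apply the direct method of the calculus of variations. First I would check that $J_{\delta,\varepsilon}$ is bounded from below on $\mathcal{K}$ and that $\mathcal{K}\neq\emptyset$. Indeed, by the strong convexity of $\mathbb{C}^{\delta}(v)$ the energies $J^{\delta}_N(v)$ and $J^{\delta}_D(v)$ are nonnegative, while for $v\in\mathcal{K}$ one has $0\le v\le 1$, so $v(1-v)\ge 0$ and $\varepsilon|\nabla v|^2\ge 0$. Recalling $J^{\delta}_{KV}(v)=\overline{J}_{ND}+J^{\delta}_N(v)+J^{\delta}_D(v)$ with $\overline{J}_{ND}$ a finite constant, this gives $J_{\delta,\varepsilon}(v)\ge\overline{J}_{ND}$. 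Since $v\equiv 0\in\mathcal{K}$, the infimum is finite.

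Next I would take a minimizing sequence $v_k\in\mathcal{K}$, so that $J_{\delta,\varepsilon}(v_k)\le c$ uniformly in $k$. The Modica--Mortola gradient term then yields $\gamma\varepsilon\|\nabla v_k\|^2_{L^2(\Omega)}\le c$, and since $\varepsilon>0$ is fixed this bounds $\|\nabla v_k\|_{L^2(\Omega)}$; together with $\|v_k\|_{L^2(\Omega)}\le|\Omega|^{1/2}$, coming from $0\le v_k\le 1$, the sequence is bounded in $H^1(\Omega)$. Hence, up to a subsequence, $v_k\rightharpoonup v$ weakly in $H^1(\Omega)$, and by the Rellich--Kondrachov compact embedding $v_k\to v$ strongly in $L^2(\Omega)$, thus also in $L^1(\Omega)$ and, along a further subsequence, a.e. in $\Omega$. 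The constraints $0\le v\le 1$ and $v=0$ a.e. in $\Omega^{d_0}$ pass to the a.e. limit, so $v\in\mathcal{K}$; equivalently $\mathcal{K}$, being convex and strongly closed in $H^1(\Omega)$, is weakly closed.

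It then remains to establish the lower semicontinuity of $J_{\delta,\varepsilon}$ along this sequence. The term $v\mapsto\int_{\Omega}\varepsilon|\nabla v|^2\,dx$ is convex and continuous on $H^1(\Omega)$, hence weakly lower semicontinuous, giving $\int_{\Omega}\varepsilon|\nabla v|^2\le\liminf_k\int_{\Omega}\varepsilon|\nabla v_k|^2$. The potential term is even continuous under the strong $L^2$ convergence: writing $v(1-v)=v-v^2$ and using $\int_{\Omega}v_k\to\int_{\Omega}v$ together with $\int_{\Omega}v_k^2\to\int_{\Omega}v^2$, we get $\frac1\varepsilon\int_{\Omega}v_k(1-v_k)\to\frac1\varepsilon\int_{\Omega}v(1-v)$. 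Finally, since $v_k\to v$ in $L^1(\Omega)$, the continuity of the Kohn--Vogelius terms from Proposition \ref{continuity} yields $J^{\delta}_{KV}(v_k)\to J^{\delta}_{KV}(v)$. Combining these facts gives $J_{\delta,\varepsilon}(v)\le\liminf_k J_{\delta,\varepsilon}(v_k)=\inf_{\mathcal{K}}J_{\delta,\varepsilon}$, so $v$ is a minimizer.

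The only step requiring real care is the appeal to Proposition \ref{continuity}, which is stated for sequences in $X_{0,1}$, whereas the minimizing sequence lives in $\mathcal{K}$ and takes values in the whole interval $[0,1]$. Its proof, however, uses only the $L^1$ (hence a.e.) convergence of $v_k$, the uniform boundedness of $\mathbb{C}_0$, and dominated convergence to force $\|(\mathbb{C}^{\delta}_k-\mathbb{C}^{\delta})\widehat{\nabla}u^{\delta}_N\|_{L^2(\Omega)}\to 0$; it never uses that the $v_k$ are $\{0,1\}$-valued. Moreover, $\mathbb{C}^{\delta}(v)=\big(1-(1-\delta)v\big)\mathbb{C}_0$ remains strongly convex with constant $\delta\xi_0$ for every $v\in[0,1]$, so the $H^1$-estimates underpinning that argument hold uniformly on $\mathcal{K}$. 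Hence Proposition \ref{continuity} applies verbatim to $\mathcal{K}$-sequences, which is precisely what the lower semicontinuity step requires.
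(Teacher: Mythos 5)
Your proof is correct and follows essentially the same route as the paper: direct method, $H^1$-bound from the Modica--Mortola gradient term plus $0\le v_k\le 1$, weak $H^1$ / strong $L^2$ / a.e.\ convergence to a limit in $\mathcal{K}$, weak lower semicontinuity of the gradient term, dominated convergence for the potential term, and Proposition \ref{continuity} for the Kohn--Vogelius part. Your closing observation that Proposition \ref{continuity} is stated for $X_{0,1}$ but its proof only uses $L^1$/a.e.\ convergence and the uniform strong convexity of $\mathbb{C}^{\delta}(v)$ on $[0,1]$ is a point the paper passes over silently, and it is a worthwhile clarification.
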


\subsection{Necessary optimality condition}
In this section, we find the first order necessary optimality condition related to the minimization problem \eqref{minrel}. For, we define 
\begin{equation}\label{eq:FN and FD}
\begin{aligned}
    F^{\delta}_{N}:\mathcal{K}&\to H^1(\Omega)\,\qquad\qquad &\textrm{and}&\qquad   F^{\delta}_{D}:&\mathcal{K}& &\to& H^1(\Omega),\\
    v&\to F^{\delta}_{N}(v)=u^{\delta}_N(v)\, &\textrm{and}&\qquad &v& &\to& F^{\delta}_{D}(v)=u^{\delta}_D(v),
\end{aligned}
\end{equation}
where $u^{\delta}_N(v)$ and $u^{\delta}_D(v)$ are solutions to \eqref{pb:neumann_incl} and \eqref{pb:dirichlet_incl}, respectively. Moreover, in the sequel, we use the set
\begin{equation}\label{eq:theta}
  \mathcal{K}-v=\{z \ s.t. \ z+v \in \mathcal{K}\}.
\end{equation}
In the following propositions, we first show that $F^{\delta}_{N}$ and $F^{\delta}_{D}$ are Frech\'et differentiable in $\mathcal{K}\subset L^{\infty}(\Omega)\cap H^1(\Omega)$. Then, we state and prove the theorem on the necessary optimality condition for $J_{\delta,\varepsilon}$.
\begin{proposition}\label{prop:FN}
The operator $F^{\delta}_{N}$, in \eqref{eq:FN and FD}, is Frech\'et differentiable in $\mathcal{K}$ and 
\begin{equation}\label{eq:derivative_FN}
    \left(F^{\delta}_N\right)'(v)[\vartheta]=\widetilde{u}^{\delta}_N(v),
\end{equation}
where $\vartheta$ is any of the elements of the set in \eqref{eq:theta} and $\widetilde{u}^{\delta}_N(v)$ is solution to
\begin{equation}\label{eq:aux4}
    \int_{\Omega}\mathbb{C}^{\delta}(v)\widehat{\nabla}{\widetilde{u}^{\delta}_N(v)}:\widehat{\nabla}{\varphi}\, dx=\int_{\Omega}\vartheta(\mathbb{C}_0-\mathbb{C}_1)\widehat{\nabla}{u^{\delta}_N(v)}:\widehat{\nabla}{\varphi}\, dx,\qquad \forall\varphi\in H^1_{\Sigma_D}(\Omega).
\end{equation}
\end{proposition}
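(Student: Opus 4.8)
The plan is to prove Fréchet differentiability directly, measuring the increment $\vartheta$ in the $L^{\infty}(\Omega)$ norm; this is the natural choice here since $\mathcal{K}\subset L^{\infty}(\Omega)\cap H^1(\Omega)$, and differentiability with respect to $L^\infty$ is inherited by the stronger norm. Indeed, if $\vartheta\in\mathcal{K}-v$ (cf. \eqref{eq:theta}), then $0\le v+\vartheta\le 1$ together with $0\le v\le 1$ forces $|\vartheta|\le 1$ a.e., so $\vartheta\in L^{\infty}(\Omega)$ with $\|\vartheta\|_{L^{\infty}(\Omega)}\le 1$. First I would check that problem \eqref{eq:aux4} is well posed. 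Its bilinear form is precisely the one in \eqref{eq:weak_form_neumann_incl}; writing $\mathbb{C}^{\delta}(v)=\mathbb{C}_0\big(1-(1-\delta)v\big)$ (see \eqref{eq:elasticity_tensor_inclusion}) shows that, for $0\le v\le 1$ and $0<\delta<1$, one may take the coercivity constant equal to $\delta\xi_0$, \emph{independent of} $v$. Since $\vartheta\in L^{\infty}(\Omega)$, $\mathbb{C}_0,\mathbb{C}_1$ are bounded (Assumption \ref{ass:elasticity_tensor}), and $u^{\delta}_N(v)\in H^1(\Omega)$, the right-hand side of \eqref{eq:aux4} is a bounded linear functional on $H^1_{\Sigma_D}(\Omega)$, so Lax--Milgram yields a unique $\widetilde{u}^{\delta}_N(v)$ with $\|\widetilde{u}^{\delta}_N(v)\|_{H^1(\Omega)}\le c\,\|\vartheta\|_{L^{\infty}(\Omega)}\,\|u^{\delta}_N(v)\|_{H^1(\Omega)}$. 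As the right-hand side is linear in $\vartheta$, the map $\vartheta\mapsto\widetilde{u}^{\delta}_N(v)$ is a bounded linear operator from $L^{\infty}(\Omega)$ into $H^1(\Omega)$, which is the expected form of the derivative.

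Next I would estimate the remainder. Set $u:=u^{\delta}_N(v)$, $u_\vartheta:=u^{\delta}_N(v+\vartheta)$, and $r:=u_\vartheta-u-\widetilde{u}^{\delta}_N(v)$. Subtracting the weak formulation \eqref{eq:weak_form_neumann_incl} for $v$ from the one for $v+\vartheta$ and using $\mathbb{C}^{\delta}(v+\vartheta)=\mathbb{C}^{\delta}(v)+\vartheta(\mathbb{C}_1-\mathbb{C}_0)$ gives, after the source terms cancel,
\[
\int_{\Omega}\mathbb{C}^{\delta}(v)\widehat{\nabla}(u_\vartheta-u):\widehat{\nabla}\varphi\,dx=-\int_{\Omega}\vartheta(\mathbb{C}_1-\mathbb{C}_0)\widehat{\nabla}u_\vartheta:\widehat{\nabla}\varphi\,dx,\qquad\forall\varphi\in H^1_{\Sigma_D}(\Omega).
\]
Subtracting \eqref{eq:aux4} from this identity, all terms carrying $\widehat{\nabla}u$ cancel and one is left with
\[
\int_{\Omega}\mathbb{C}^{\delta}(v)\widehat{\nabla}r:\widehat{\nabla}\varphi\,dx=-\int_{\Omega}\vartheta(\mathbb{C}_1-\mathbb{C}_0)\widehat{\nabla}(u_\vartheta-u):\widehat{\nabla}\varphi\,dx,\qquad\forall\varphi\in H^1_{\Sigma_D}(\Omega).
\]
Testing with $\varphi=r$, using the $v$-uniform coercivity on the left and Cauchy--Schwarz together with $\|\vartheta\|_{L^{\infty}(\Omega)}$ on the right, I obtain $\|r\|_{H^1(\Omega)}\le c\,\|\vartheta\|_{L^{\infty}(\Omega)}\,\|u_\vartheta-u\|_{H^1(\Omega)}$.

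It then remains to prove the Lipschitz-type bound $\|u_\vartheta-u\|_{H^1(\Omega)}\le c\,\|\vartheta\|_{L^{\infty}(\Omega)}$. This follows by the same device: testing the first displayed identity with $\varphi=u_\vartheta-u$, using coercivity on the left, and invoking the $v$-uniform a priori estimate $\|u_\vartheta\|_{H^1(\Omega)}\le c\,\|g\|_{L^2(\Sigma_N)}$ from \eqref{eq:H1est extended pb} on the right. Combining this with the previous estimate gives $\|r\|_{H^1(\Omega)}\le c\,\|\vartheta\|_{L^{\infty}(\Omega)}^2$, whence $\|r\|_{H^1(\Omega)}/\|\vartheta\|_{L^{\infty}(\Omega)}\to 0$ as $\|\vartheta\|_{L^{\infty}(\Omega)}\to 0$. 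Together with the boundedness and linearity of $\vartheta\mapsto\widetilde{u}^{\delta}_N(v)$, this is exactly Fréchet differentiability of $F^{\delta}_N$ with derivative \eqref{eq:derivative_FN}.

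The part requiring the most care is keeping every constant independent of $v$: this rests on the uniform coercivity of $\mathbb{C}^{\delta}(v)$ for $v\in[0,1]$ (needed both for the a priori bound on $u_\vartheta$ and for the coercivity applied to $r$), and on measuring the increment in $L^{\infty}$, since it is $\|\vartheta\|_{L^{\infty}(\Omega)}$ that controls the multiplicative perturbation $\mathbb{C}^{\delta}(v+\vartheta)-\mathbb{C}^{\delta}(v)=\vartheta(\mathbb{C}_1-\mathbb{C}_0)$ against the gradients. The quadratic dependence $\|r\|_{H^1(\Omega)}\le c\,\|\vartheta\|_{L^{\infty}(\Omega)}^2$ is precisely what produces the little-$o$ behaviour and hence Fréchet, rather than merely Gâteaux, differentiability.
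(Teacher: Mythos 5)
Your proof is correct and follows essentially the same route as the paper: derive the difference identity for $u^{\delta}_N(v+\vartheta)-u^{\delta}_N(v)$, obtain the Lipschitz bound $\|u^{\delta}_N(v+\vartheta)-u^{\delta}_N(v)\|_{H^1(\Omega)}\le c\|\vartheta\|_{L^{\infty}(\Omega)}$, then subtract \eqref{eq:aux4} and test with the remainder to get the quadratic bound. The only (immaterial) difference is that you place $\mathbb{C}^{\delta}(v)$ on the difference term and $\widehat{\nabla}u^{\delta}_N(v+\vartheta)$ in the perturbation, whereas the paper does the reverse; your added remarks on the well-posedness of \eqref{eq:aux4} and the linearity and boundedness of $\vartheta\mapsto\widetilde{u}^{\delta}_N(v)$ are points the paper leaves implicit.
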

\begin{proof}
Taking the weak variational formulation \eqref{eq:weak_form_neumann_incl} for $u^{\delta}_N(v+\vartheta)$, $u^{\delta}_N(v)$, we get that the difference $u^{\delta}_N(v+\vartheta)-u^{\delta}_N(v)$ satisfies
\begin{equation}\label{difference}
\begin{aligned}
&\int_{\Omega}\mathbb{C}^{\delta}(v + \vartheta) \widehat\nabla (u^{\delta}_N(v+\vartheta)-u^{\delta}_N(v)) : \widehat\nabla \varphi\, dx\\
+ &\int_{\Omega}(\mathbb{C}^{\delta}(v + \vartheta) - \mathbb{C}^{\delta}(v)) \widehat\nabla u^{\delta}_N(v) : \widehat\nabla \varphi\, dx  = 0, \quad \forall \varphi \in H_{\Sigma_D}^1(\Omega).
\end{aligned}
\end{equation}
Choosing $\varphi = u^{\delta}_N(v+\vartheta)-u^{\delta}_N(v)$ and recalling that $\mathbb{C}^{\delta}(v + \vartheta) - \mathbb{C}^{\delta}(v) = (\mathbb{C}_1 - \mathbb{C}_0)\vartheta$,  we obtain
\begin{equation*}
\begin{aligned}
&\int_{\Omega}\mathbb{C}^{\delta}(v +\vartheta) \widehat\nabla (u^{\delta}_N(v+\vartheta)-u^{\delta}_N(v)):\widehat\nabla (u^{\delta}_N(v+\vartheta)-u^{\delta}_N(v))\, dx \\
=
- &\int_{\Omega} \vartheta(\mathbb{C}_1 - \mathbb{C}_0)\widehat\nabla u^{\delta}_N(v) : \widehat\nabla (u^{\delta}_N(v+\vartheta)-u^{\delta}_N(v))\, dx.
\end{aligned}
\end{equation*}
By means of the assumptions on the elasticity tensors, see Assumption \ref{ass:elasticity_tensor}, Korn and Poincar\'e inequalities,
and since $v + \vartheta \in \mathcal{K}$, we find that
\begin{equation}\label{estdiff}
\begin{aligned}
&\|{u^{\delta}_N(v+\vartheta) - u^{\delta}_N(v)}\|_{H^1(\Omega)} \leq c\|{\vartheta}\|_{L^\infty(\Omega)}\|{u^{\delta}_N(v)}\|_{H^1(\Omega)}
\leq c\|{\vartheta}\|_{L^\infty(\Omega)}.
\end{aligned}
\end{equation}
Subtract \eqref{eq:aux4} from \eqref{difference}, hence for all $\varphi\in H^1_{\Sigma_D}(\Omega)$
\begin{equation}\label{eq:aux9}
    \int_{\Omega}\mathbb{C}^{\delta}(v+\vartheta)\widehat{\nabla}{(u^{\delta}_N(v+\vartheta)-u^{\delta}_N(v))}:\widehat{\nabla}{\varphi}\, dx=\int_{\Omega}\mathbb{C}^{\delta}(v)\widehat{\nabla}{\widetilde{u}^{\delta}_N(v)}:\widehat{\nabla}{\varphi}\, dx.
\end{equation}
In the previous equation, choosing $\omega^{\delta}_N:=u^{\delta}_N(v+\vartheta)-u^{\delta}_N(v)$ and adding and subtracting $\mathbb{C}^{\delta}(v)\widehat{\nabla}{\omega^{\delta}_N}:\widehat{\nabla}{\varphi}$, we get
\begin{equation*}
\begin{aligned}
    \int_{\Omega}\mathbb{C}^{\delta}(v)\widehat{\nabla}{(\omega^{\delta}_N-\widetilde{u}^{\delta}_N(v))}:\widehat{\nabla}{\varphi}\, dx&=-\int_{\Omega}\left(\mathbb{C}^{\delta}(v+\vartheta) - \mathbb{C}^{\delta}(v)\right)\widehat{\nabla}{\omega^{\delta}_N}:\widehat{\nabla}{\varphi}\, dx\\
    &=\int_{\Omega}\vartheta(\mathbb{C}_0-\mathbb{C}_1)\widehat{\nabla}{\omega^{\delta}_N}:\widehat{\nabla}{\varphi}\, dx,\quad \forall\varphi\in H^1_{\Sigma_D}(\Omega).
\end{aligned}    
\end{equation*}
Then, taking $\varphi=\omega^{\delta}_N-\widetilde{u}^{\delta}_N$, and using the estimate on $\omega^{\delta}_N$ given in \eqref{estdiff}, we find
\begin{equation*}
    \|\omega^{\delta}_N-\widetilde{u}^{\delta}_N\|_{H^1(\Omega)}\leq c\|\vartheta\|_{L^{\infty}(\Omega)}\|\omega^{\delta}_N\|_{H^1(\Omega)}\leq c \|\vartheta\|^2_{L^{\infty}(\Omega)},
\end{equation*}
hence the assertion.
\end{proof}

As corollary, we find the differentiability of the Neumann functional $J^{\delta}_N$.
\begin{corollary}\label{cor:der JN}
The functional $J^{\delta}_N$, as defined in \eqref{def:JN and JD delta}, is Frech\'et differentiable in $\mathcal{K}$ and
	\begin{equation}
	\left(J^{\delta}_N\right)'(v)[\vartheta]=-\frac{1}{2}\int_{\Omega}\vartheta(\mathbb{C}_1-\mathbb{C}_0)\widehat{\nabla}{u^{\delta}_N(v)}:\widehat{\nabla}{u^{\delta}_N(v)}\, dx, 
	\end{equation}
where $\vartheta$ is any of the elements of the set in \eqref{eq:theta} and ${u}^{\delta}_N(v)$ is solution to \eqref{pb:neumann_incl}.	
\end{corollary}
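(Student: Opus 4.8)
The plan is to obtain the result as a genuine corollary of Proposition \ref{prop:FN}, exploiting the Fréchet differentiability of $F^{\delta}_N$ together with the chain rule, rather than expanding the quadratic energy by hand. The first step is to rewrite $J^{\delta}_N$ in a form that is \emph{linear} in the state. Testing the weak formulation \eqref{eq:weak_form_neumann_incl} with $\varphi=u^{\delta}_N(v)$ gives $2J^{\delta}_N(v)=\int_{\Sigma_N}g\cdot u^{\delta}_N(v)\, d\sigma(x)$, so that $J^{\delta}_N=L\circ F^{\delta}_N$, where $L\colon H^1(\Omega)\to\mathbb{R}$, $L(u)=\tfrac12\int_{\Sigma_N}g\cdot u\, d\sigma(x)$, is a bounded linear functional whose continuity follows from the trace theorem together with $g\in L^2(\Sigma_N)$ (Assumption \ref{ass:neumann_data}). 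Since $F^{\delta}_N$ is Fréchet differentiable at $v$ with derivative $\widetilde{u}^{\delta}_N(v)$ by Proposition \ref{prop:FN}, and a bounded linear map is its own Fréchet derivative, the chain rule yields that $J^{\delta}_N$ is Fréchet differentiable in $\mathcal{K}$ with $\left(J^{\delta}_N\right)'(v)[\vartheta]=\tfrac12\int_{\Sigma_N}g\cdot\widetilde{u}^{\delta}_N(v)\, d\sigma(x)$.

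It then remains to convert this boundary integral into the claimed volume integral, and here the key is the symmetry of the energy form. I would test the weak formulation \eqref{eq:weak_form_neumann_incl} written for $u^{\delta}_N(v)$ against the admissible function $\varphi=\widetilde{u}^{\delta}_N(v)\in H^1_{\Sigma_D}(\Omega)$ to obtain $\int_{\Sigma_N}g\cdot\widetilde{u}^{\delta}_N(v)\, d\sigma(x)=\int_{\Omega}\mathbb{C}^{\delta}(v)\widehat{\nabla}{u^{\delta}_N(v)}:\widehat{\nabla}{\widetilde{u}^{\delta}_N(v)}\, dx$. By the major symmetry of $\mathbb{C}^{\delta}(v)$, inherited from $\mathbb{C}_0$ through the definition \eqref{eq:elasticity_tensor_inclusion}, the two factors in the bilinear form may be interchanged; then the identity \eqref{eq:aux4} taken with $\varphi=u^{\delta}_N(v)$ turns the right-hand side into $\int_{\Omega}\vartheta(\mathbb{C}_0-\mathbb{C}_1)\widehat{\nabla}{u^{\delta}_N(v)}:\widehat{\nabla}{u^{\delta}_N(v)}\, dx$. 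Dividing by $2$ and writing $\mathbb{C}_0-\mathbb{C}_1=-(\mathbb{C}_1-\mathbb{C}_0)$ gives precisely the stated expression for $\left(J^{\delta}_N\right)'(v)[\vartheta]$.

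I do not expect a genuine obstacle, as this is a corollary; the only points demanding care are the continuity of the linear functional $L$, so that the chain rule applies in the Fréchet (not merely Gâteaux) sense, and the correct use of the major symmetry of $\mathbb{C}^{\delta}(v)$ to move the test function across the bilinear form. As an alternative, more computational route one could expand $J^{\delta}_N(v+\vartheta)-J^{\delta}_N(v)$ directly, inserting $\mathbb{C}^{\delta}(v+\vartheta)-\mathbb{C}^{\delta}(v)=(\mathbb{C}_1-\mathbb{C}_0)\vartheta$ and controlling the remainder quadratically in $\|\vartheta\|_{L^\infty(\Omega)}$ by means of the estimate \eqref{estdiff}; I would nonetheless prefer the chain-rule argument above, since it is shorter and confines all the analytic work to the already-established Proposition \ref{prop:FN}.
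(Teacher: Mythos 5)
Your proposal is correct, and it reaches the stated formula by a genuinely different route from the paper. The paper's proof is the ``computational'' one you mention only as an alternative: it expands $J^{\delta}_N(v+\vartheta)-J^{\delta}_N(v)$ by adding and subtracting terms, discards the remainders of order $\|\vartheta\|^2_{L^\infty(\Omega)}$ and $\|\vartheta\|^3_{L^\infty(\Omega)}$ via \eqref{estdiff}, and is left with the explicit term $\tfrac12\int_\Omega\vartheta(\mathbb{C}_1-\mathbb{C}_0)\widehat{\nabla}u^{\delta}_N(v):\widehat{\nabla}u^{\delta}_N(v)$ plus the cross term $\int_\Omega\mathbb{C}^{\delta}(v)\widehat{\nabla}\widetilde{u}^{\delta}_N(v):\widehat{\nabla}u^{\delta}_N(v)$, which it then evaluates with \eqref{eq:aux4} at $\varphi=u^{\delta}_N(v)$; the sum collapses to $-\tfrac12\int_\Omega\vartheta(\mathbb{C}_1-\mathbb{C}_0)\widehat{\nabla}u^{\delta}_N(v):\widehat{\nabla}u^{\delta}_N(v)$. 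You instead first trade the quadratic energy for the linear boundary expression $J^{\delta}_N(v)=\tfrac12\int_{\Sigma_N}g\cdot u^{\delta}_N(v)\,d\sigma$ (legitimate, by testing \eqref{eq:weak_form_neumann_incl} with $\varphi=u^{\delta}_N(v)$), so that the explicit $v$-dependence of $\mathbb{C}^{\delta}(v)$ in the integrand disappears and the chain rule with the bounded linear functional $L$ does all the work; the remainder estimate is inherited directly from Proposition \ref{prop:FN}, since $|L(F^{\delta}_N(v+\vartheta)-F^{\delta}_N(v)-\widetilde{u}^{\delta}_N(v))|\leq c\|\vartheta\|^2_{L^\infty(\Omega)}$. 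Your back-conversion of $\tfrac12\int_{\Sigma_N}g\cdot\widetilde{u}^{\delta}_N(v)\,d\sigma$ into the volume integral is also sound: $\widetilde{u}^{\delta}_N(v)\in H^1_{\Sigma_D}(\Omega)$ is an admissible test function in \eqref{eq:weak_form_neumann_incl}, the major symmetry of $\mathbb{C}^{\delta}(v)$ lets you swap the arguments, and \eqref{eq:aux4} with $\varphi=u^{\delta}_N(v)$ finishes. What your route buys is brevity and a cleaner separation of concerns (all the hard analysis sits in Proposition \ref{prop:FN}); what it costs is that it is specific to the Neumann energy, for which $J^{\delta}_N$ admits a linear boundary representation in the state --- note that the paper's companion Corollary \ref{cor:der JD} for $J^{\delta}_D$ cannot be handled the same way and needs the direct expansion, which is presumably why the author treats both corollaries uniformly by the computational method.
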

\begin{proof}
Let us consider 
\begin{equation*}
\begin{aligned}
    J^{\delta}_N(v+\vartheta)-J^{\delta}_N(v)&=\frac{1}{2}\int_{\Omega}\mathbb{C}^{\delta}(v+\vartheta)\widehat{\nabla}{F^{\delta}_N(v+\vartheta)}:\widehat{\nabla}{F^{\delta}_N(v+\vartheta)}\, dx\\ &-\frac{1}{2}\int_{\Omega}\mathbb{C}^{\delta}(v)\widehat{\nabla}{F^{\delta}_N(v)}:\widehat{\nabla}{F^{\delta}_N(v)}\, dx.
\end{aligned}
\end{equation*}
Adding and subtracting suitable quantities in the previous equation, we get
\begin{equation*}
 \begin{aligned}
    &J^{\delta}_N(v+\vartheta)-J^{\delta}_N(v)\\
    &=\frac{1}{2}\int_{\Omega}\left(\mathbb{C}^{\delta}(v+\vartheta)-\mathbb{C}^{\delta}(v)\right)\widehat{\nabla}{\left(F^{\delta}_N(v+\vartheta)-F^{\delta}_N(v)\right)}:\widehat{\nabla}{\left(F^{\delta}_N(v+\vartheta)-F^{\delta}_N(v)\right)}\, dx\\
    &+\frac{1}{2}\int_{\Omega}\mathbb{C}^{\delta}(v)\widehat{\nabla}{\left(F^{\delta}_N(v+\vartheta)-F^{\delta}_N(v)\right)}:\widehat{\nabla}{\left(F^{\delta}_N(v+\vartheta)-F^{\delta}_N(v)\right)}\, dx\\
    &+\frac{1}{2}\int_{\Omega}\left(\mathbb{C}^{\delta}(v+\vartheta)-\mathbb{C}^{\delta}(v)\right)\widehat{\nabla}{F^{\delta}_N(v)}:\widehat{\nabla}{F^{\delta}_N(v)}\, dx\\
    &+\int_{\Omega}\mathbb{C}^{\delta}(v+\vartheta)\widehat{\nabla}{\left(F^{\delta}_N(v+\vartheta)-F^{\delta}_N(v)\right)}:\widehat{\nabla}{F^{\delta}_N(v)}\, dx.
\end{aligned}   
\end{equation*}
Using $H^1-$estimates for $F^{\delta}_N(v+\vartheta)-F^{\delta}_N(v)$ in \eqref{estdiff}, we have that the first two terms of the previous equation behaves as $\|\vartheta\|^3_{L^{\infty}(\Omega)}$ and $\|\vartheta\|^2_{L^{\infty}(\Omega)}$, respectively. Moreover, by means of equation \eqref{eq:derivative_FN}, and recalling that $\mathbb{C}^{\delta}(v+\vartheta)-\mathbb{C}^{\delta}(v)=\vartheta(\mathbb{C}_1-\mathbb{C}_0)$, we get that
\begin{equation*}
\begin{aligned}
    \left(J^{\delta}_N\right)'(v)[\vartheta]&=\frac{1}{2}\int_{\Omega}\vartheta\left(\mathbb{C}_1-\mathbb{C}_0\right)\widehat{\nabla}{u^{\delta}_N(v)}:\widehat{\nabla}{u^{\delta}_N(v)}\, dx \\
    &+\int_{\Omega}\mathbb{C}^{\delta}(v)\widehat{\nabla}{\widetilde{u}^{\delta}_N(v)}:\widehat{\nabla}{u^{\delta}_N(v)}\, dx.
\end{aligned}
\end{equation*}
From \eqref{eq:aux4}, choosing $\varphi=u^{\delta}_N(v)$, we find
\begin{equation*}
    \left(J^{\delta}_N\right)'(v)[\vartheta]=-\frac{1}{2}\int_{\Omega}\vartheta\left(\mathbb{C}_1-\mathbb{C}_0\right)\widehat{\nabla}{u^{\delta}_N(v)}:\widehat{\nabla}{u^{\delta}_N(v)}\, dx,
\end{equation*}
that is the assertion.
\end{proof}

\begin{proposition}\label{prop:FD}
The operator $F^{\delta}_{D}$, in \eqref{eq:FN and FD}, is Frech\'et differentiable in $\mathcal{K}$ and 
\begin{equation}\label{eq:derivative_FD}
    \left(F^{\delta}_D\right)'(v)[\vartheta]=\widetilde{u}^{\delta}_D(v),
\end{equation}
where $\vartheta$ is any of the elements of the set in \eqref{eq:theta} and $\widetilde{u}^{\delta}_D(v)$ is solution to
\begin{equation}\label{eq:aux5}
    \int_{\Omega}\mathbb{C}^{\delta}(v)\widehat{\nabla}{\widetilde{u}^{\delta}_D(v)}:\widehat{\nabla}{\psi}\, dx=\int_{\Omega}\vartheta(\mathbb{C}_0-\mathbb{C}_1)\widehat{\nabla}{u^{\delta}_D(v)}:\widehat{\nabla}{\psi}\, dx,\qquad \forall\psi\in H^1_0(\Omega).
\end{equation}
\end{proposition}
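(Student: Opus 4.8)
The plan is to mimic the proof of Proposition \ref{prop:FN}, the only structural novelty being the functional setting dictated by the essential (Dirichlet) boundary condition. The crucial observation is that, since the boundary datum $f$ on $\Sigma_N$ and the homogeneous datum on $\Sigma_D$ are fixed and do not depend on the phase field $v$, the increment $u^{\delta}_D(v+\vartheta)-u^{\delta}_D(v)$ has vanishing trace on the whole of $\partial\Omega$, hence belongs to $H^1_0(\Omega)$. In particular the lifting $u^f$ cancels in the difference, and $H^1_0(\Omega)$ is precisely the space in which both the increment and the candidate derivative $\widetilde{u}^{\delta}_D(v)$ from \eqref{eq:aux5} live; the latter is well-defined in $H^1_0(\Omega)$ by Lax--Milgram, exactly as for \eqref{eq:weak_form_dirichlet_incl}.

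First I would test the weak formulation of \eqref{pb:dirichlet_incl} for the parameters $v+\vartheta$ and $v$ against an arbitrary $\psi\in H^1_0(\Omega)$; since $\psi$ vanishes on $\partial\Omega$, each state $u$ satisfies $\int_{\Omega}\mathbb{C}^{\delta}(v)\widehat{\nabla}u:\widehat{\nabla}\psi\,dx=0$. Subtracting the two identities and using $\mathbb{C}^{\delta}(v+\vartheta)-\mathbb{C}^{\delta}(v)=(\mathbb{C}_1-\mathbb{C}_0)\vartheta$ yields
\begin{equation*}
\int_{\Omega}\mathbb{C}^{\delta}(v+\vartheta)\widehat{\nabla}\big(u^{\delta}_D(v+\vartheta)-u^{\delta}_D(v)\big):\widehat{\nabla}\psi\,dx=\int_{\Omega}\vartheta(\mathbb{C}_0-\mathbb{C}_1)\widehat{\nabla}u^{\delta}_D(v):\widehat{\nabla}\psi\,dx,\qquad\forall\psi\in H^1_0(\Omega).
\end{equation*}
Choosing $\psi=u^{\delta}_D(v+\vartheta)-u^{\delta}_D(v)$, which is admissible precisely because of the observation above, and invoking the strong convexity of $\mathbb{C}^{\delta}(v+\vartheta)$ (which holds since $v+\vartheta\in\mathcal{K}$) together with the Korn and Poincaré inequalities of Proposition \ref{proposition1}, I would obtain the Lipschitz bound
\begin{equation*}
\|u^{\delta}_D(v+\vartheta)-u^{\delta}_D(v)\|_{H^1(\Omega)}\leq c\,\|\vartheta\|_{L^\infty(\Omega)}\|u^{\delta}_D(v)\|_{H^1(\Omega)}\leq c\,\|\vartheta\|_{L^\infty(\Omega)},
\end{equation*}
in perfect analogy with \eqref{estdiff}.

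To identify the derivative, I would set $\omega^{\delta}_D:=u^{\delta}_D(v+\vartheta)-u^{\delta}_D(v)$ and subtract \eqref{eq:aux5} from the displayed identity for the increment; adding and subtracting $\mathbb{C}^{\delta}(v)\widehat{\nabla}\omega^{\delta}_D:\widehat{\nabla}\psi$ gives, for all $\psi\in H^1_0(\Omega)$,
\begin{equation*}
\int_{\Omega}\mathbb{C}^{\delta}(v)\widehat{\nabla}\big(\omega^{\delta}_D-\widetilde{u}^{\delta}_D(v)\big):\widehat{\nabla}\psi\,dx=\int_{\Omega}\vartheta(\mathbb{C}_0-\mathbb{C}_1)\widehat{\nabla}\omega^{\delta}_D:\widehat{\nabla}\psi\,dx.
\end{equation*}
Testing with $\psi=\omega^{\delta}_D-\widetilde{u}^{\delta}_D(v)\in H^1_0(\Omega)$ and inserting the Lipschitz estimate for $\omega^{\delta}_D$, I would conclude $\|\omega^{\delta}_D-\widetilde{u}^{\delta}_D(v)\|_{H^1(\Omega)}\leq c\,\|\vartheta\|_{L^\infty(\Omega)}\|\omega^{\delta}_D\|_{H^1(\Omega)}\leq c\,\|\vartheta\|_{L^\infty(\Omega)}^2$. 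Since the remainder is thus $o(\|\vartheta\|_{L^\infty(\Omega)})$, this proves that $\widetilde{u}^{\delta}_D(v)$ is the Fréchet derivative $(F^{\delta}_D)'(v)[\vartheta]$. I do not expect a serious obstacle here: the only point requiring genuine care — and the sole difference from the Neumann case — is checking at the outset that the increment belongs to $H^1_0(\Omega)$, so that it is a legitimate test function and so that $H^1_0(\Omega)$ (rather than $H^1_{\Sigma_D}(\Omega)$) is the correct space throughout. Once this is secured, the coercivity-plus-Korn argument is entirely routine.
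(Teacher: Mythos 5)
Your proposal is correct and follows essentially the same route as the paper's proof: the paper phrases the argument through the lifted unknown $w^{\delta}_D(v)=u^{\delta}_D(v)-u^f$ and then uses $w^{\delta}_D(v+\vartheta)-w^{\delta}_D(v)\equiv u^{\delta}_D(v+\vartheta)-u^{\delta}_D(v)$ and $u^{\delta}_D=w^{\delta}_D+u^f$ to arrive at exactly the identity you derive directly, after which the coercivity estimate, the subtraction of \eqref{eq:aux5}, and the test with $\omega^{\delta}_D-\widetilde{u}^{\delta}_D(v)$ are identical. Your observation that the increment lies in $H^1_0(\Omega)$ is precisely the point the paper encodes via the lifting, so the difference is purely presentational.
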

\begin{proof}
The proof of this proposition is analogous to the one of Proposition \ref{prop:FN}.\\
Let us consider $w^{\delta}_D(v)=u^{\delta}_D(v)-u^f$ and the related weak formulation \eqref{eq:weak_form_dirichlet_incl}. Taking the weak variational formulation \eqref{eq:weak_form_dirichlet_incl} for $w^{\delta}_D(v+\vartheta)$ and $w^{\delta}_D(v)$, we get that the difference $w^{\delta}_D(v+\vartheta)-w^{\delta}_D(v)\equiv u^{\delta}_D(v+\vartheta)-u^{\delta}_D(v)$ satisfies
\begin{equation*}
\begin{aligned}
\int_{\Omega}\mathbb{C}^{\delta}(v + \vartheta) \widehat\nabla (w^{\delta}_D(v+\vartheta)-w^{\delta}_D(v)) : \widehat\nabla \psi\, dx
&+ \int_{\Omega}(\mathbb{C}^{\delta}(v + \vartheta) - \mathbb{C}^{\delta}(v)) \widehat\nabla w^{\delta}_D(v) : \widehat\nabla \psi\, dx \\
&\hspace{-4cm}= -\int_{\Omega}\left(\mathbb{C}^{\delta}(v+\vartheta)-\mathbb{C}^{\delta}(v)\right)\widehat{\nabla}{u^f}:\widehat{\nabla}{\psi}\, dx, \quad \forall \psi \in H_0^1(\Omega),
\end{aligned}
\end{equation*}
that is 
\begin{equation}\label{difference_dirichlet}
\begin{aligned}
&\int_{\Omega}\mathbb{C}^{\delta}(v + \vartheta) \widehat\nabla (w^{\delta}_D(v+\vartheta)-w^{\delta}_D(v)) : \widehat\nabla \psi\, dx\\
&= -\int_{\Omega}\left(\mathbb{C}^{\delta}(v+\vartheta)-\mathbb{C}^{\delta}(v)\right)\widehat{\nabla}{u^{\delta}_D(v)}:\widehat{\nabla}{\psi}\, dx, \quad \forall \psi \in H_0^1(\Omega).
\end{aligned}
\end{equation}
Choosing $\psi = w^{\delta}_D(v+\vartheta)-w^{\delta}_D(v)$ and recalling that $\mathbb{C}^{\delta}(v + \vartheta) - \mathbb{C}^{\delta}(v) = (\mathbb{C}_1 - \mathbb{C}_0)\vartheta$,  we obtain
\begin{equation*}
\begin{aligned}
&\int_{\Omega}\mathbb{C}^{\delta}(v +\vartheta) \widehat\nabla (w^{\delta}_D(v+\vartheta)-w^{\delta}_D(v)):\widehat\nabla (w^{\delta}_D(v+\vartheta)-w^{\delta}_D(v)) \\
=
- &\int_{\Omega} \vartheta(\mathbb{C}_1 - \mathbb{C}_0)\widehat\nabla u^{\delta}_D(v) : \widehat\nabla (w^{\delta}_D(v+\vartheta)-w^{\delta}_D(v)).
\end{aligned}
\end{equation*}
By means of the assumptions on the elasticity tensors, see Assumption \ref{ass:elasticity_tensor}, Korn and Poincar\'e inequalities,
and since $v + \vartheta \in \mathcal{K}$, we find that
\begin{equation}\label{estdiff_dirichlet}
\begin{aligned}
&\|{w^{\delta}_D(v+\vartheta) - w^{\delta}_D(v)}\|_{H^1(\Omega)} \leq c\|{\vartheta}\|_{L^\infty(\Omega)}\|{u^{\delta}_D(v)}\|_{H^1(\Omega)}
\leq c\|{\vartheta}\|_{L^\infty(\Omega)},
\end{aligned}
\end{equation}
hence
\begin{equation}\label{eq:aux15}
    \|{u^{\delta}_D(v+\vartheta) - u^{\delta}_D(v)}\|_{H^1(\Omega)} \leq c\|{\vartheta}\|_{L^\infty(\Omega)}.
\end{equation}
Subtract \eqref{eq:aux5} from \eqref{difference_dirichlet}, hence, for all $\psi\in H^1_0(\Omega)$
\begin{equation}\label{eq:aux11}
    \int_{\Omega}\mathbb{C}^{\delta}(v+\vartheta)\widehat{\nabla}{(w^{\delta}_D(v+\vartheta)-w^{\delta}_D(v))}:\widehat{\nabla}{\psi}\, dx=\int_{\Omega}\mathbb{C}^{\delta}(v)\widehat{\nabla}{\widetilde{u}^{\delta}_D(v)}:\widehat{\nabla}{\psi}\, dx.
\end{equation}
In the previous equation, choosing $\omega^{\delta}_D:=w^{\delta}_D(v+\vartheta)-w^{\delta}_D(v)$ and adding and subtracting $\mathbb{C}^{\delta}(v)\widehat{\nabla}{\omega^{\delta}_D}:\widehat{\nabla}{\psi}$, we get
\begin{equation*}
\begin{aligned}
    \int_{\Omega}\mathbb{C}^{\delta}(v)\widehat{\nabla}{(\omega^{\delta}_D-\widetilde{u}^{\delta}_D(v))}:\widehat{\nabla}{\psi}\, dx&=-\int_{\Omega}\left(\mathbb{C}^{\delta}(v+\vartheta) - \mathbb{C}^{\delta}(v)\right)\widehat{\nabla}{\omega^{\delta}_D}:\widehat{\nabla}{\psi}\, dx\\
    &=\int_{\Omega}\vartheta(\mathbb{C}_0-\mathbb{C}_1)\widehat{\nabla}{\omega^{\delta}_D}:\widehat{\nabla}{\psi}\, dx,\quad \forall\psi\in H^1_0(\Omega).
\end{aligned}    
\end{equation*}
Then, taking $\psi=\omega^{\delta}_D-\widetilde{u}^{\delta}_D$, and using the estimate on $\omega^{\delta}_D$ given in \eqref{estdiff_dirichlet}, we find
\begin{equation*}
    \|\omega^{\delta}_D-\widetilde{u}^{\delta}_D\|_{H^1(\Omega)}\leq c\|\vartheta\|_{L^{\infty}(\Omega)}\|\omega^{\delta}_D\|_{H^1(\Omega)}\leq c \|\vartheta\|^2_{L^{\infty}(\Omega)},
\end{equation*}
hence the assertion.
\end{proof}

As corollary, we prove the differentiability of the Dirichlet functional $J^{\delta}_D$.
\begin{corollary}\label{cor:der JD}
The functional $J^{\delta}_D$, as defined in \eqref{def:JN and JD delta}, is Frech\'et differentiable in $\mathcal{K}$ and
	\begin{equation}
	\left(J^{\delta}_D\right)'(v)[\vartheta]=\frac{1}{2}\int_{\Omega}\vartheta(\mathbb{C}_1-\mathbb{C}_0)\widehat{\nabla}{u^{\delta}_D(v)}:\widehat{\nabla}{u^{\delta}_D(v)}\, dx,
	\end{equation}
where $\vartheta$ is any of the elements of the set in \eqref{eq:theta} and ${u}^{\delta}_D(v)$ is solution to \eqref{pb:dirichlet_incl}.	
\end{corollary}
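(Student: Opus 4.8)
The strategy is to follow the scheme of Corollary~\ref{cor:der JN} almost verbatim, exploiting that $J^{\delta}_D$ is the same quadratic energy as $J^{\delta}_N$ but built on $F^{\delta}_D$ rather than $F^{\delta}_N$; only the final identification of a cross term will differ, and it is precisely that step which flips the sign. First I would write the increment $J^{\delta}_D(v+\vartheta)-J^{\delta}_D(v)$ and, after adding and subtracting suitable quantities, decompose it into the four terms
\begin{equation*}
\begin{aligned}
&\tfrac12\!\int_{\Omega}(\mathbb{C}^{\delta}(v+\vartheta)-\mathbb{C}^{\delta}(v))\widehat\nabla\Delta u:\widehat\nabla\Delta u
+\tfrac12\!\int_{\Omega}\mathbb{C}^{\delta}(v)\widehat\nabla\Delta u:\widehat\nabla\Delta u\\
&+\tfrac12\!\int_{\Omega}(\mathbb{C}^{\delta}(v+\vartheta)-\mathbb{C}^{\delta}(v))\widehat\nabla u^{\delta}_D(v):\widehat\nabla u^{\delta}_D(v)
+\int_{\Omega}\mathbb{C}^{\delta}(v+\vartheta)\widehat\nabla\Delta u:\widehat\nabla u^{\delta}_D(v),
\end{aligned}
\end{equation*}
where $\Delta u:=F^{\delta}_D(v+\vartheta)-F^{\delta}_D(v)=u^{\delta}_D(v+\vartheta)-u^{\delta}_D(v)$ belongs to $H^1_0(\Omega)$, since both states coincide with $f$ on $\Sigma_N$ and vanish on $\Sigma_D$.

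Next I would invoke the $H^1$–estimate \eqref{eq:aux15}, namely $\|\Delta u\|_{H^1(\Omega)}\leq c\|\vartheta\|_{L^\infty(\Omega)}$, to see that the first two terms are $O(\|\vartheta\|_{L^\infty(\Omega)}^3)$ and $O(\|\vartheta\|_{L^\infty(\Omega)}^2)$, hence higher order. The third term is already exactly linear in $\vartheta$, because $\mathbb{C}^{\delta}(v+\vartheta)-\mathbb{C}^{\delta}(v)=\vartheta(\mathbb{C}_1-\mathbb{C}_0)$, and contributes $\tfrac12\int_\Omega\vartheta(\mathbb{C}_1-\mathbb{C}_0)\widehat\nabla u^{\delta}_D(v):\widehat\nabla u^{\delta}_D(v)$. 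In the fourth term I would replace $\mathbb{C}^{\delta}(v+\vartheta)$ by $\mathbb{C}^{\delta}(v)$ (the difference costing $O(\|\vartheta\|_{L^\infty(\Omega)}^2)$ via \eqref{eq:aux15}) and $\Delta u$ by $\widetilde{u}^{\delta}_D(v)=(F^{\delta}_D)'(v)[\vartheta]$ supplied by Proposition~\ref{prop:FD} (again up to an $O(\|\vartheta\|_{L^\infty(\Omega)}^2)$ remainder). Collecting the linear parts gives
\begin{equation*}
(J^{\delta}_D)'(v)[\vartheta]=\frac12\int_\Omega\vartheta(\mathbb{C}_1-\mathbb{C}_0)\widehat\nabla u^{\delta}_D(v):\widehat\nabla u^{\delta}_D(v)\,dx+\int_\Omega\mathbb{C}^{\delta}(v)\widehat\nabla\widetilde{u}^{\delta}_D(v):\widehat\nabla u^{\delta}_D(v)\,dx.
\end{equation*}

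The decisive — and only genuinely different — step is the evaluation of the cross term $\int_\Omega\mathbb{C}^{\delta}(v)\widehat\nabla\widetilde{u}^{\delta}_D(v):\widehat\nabla u^{\delta}_D(v)$, and I expect this to be the main obstacle. In Corollary~\ref{cor:der JN} one simply tested \eqref{eq:aux4} with $\varphi=u^{\delta}_N(v)$, which was legitimate since $u^{\delta}_N(v)\in H^1_{\Sigma_D}(\Omega)$; here the analogous move is forbidden, because the test functions in \eqref{eq:aux5} must lie in $H^1_0(\Omega)$ while $u^{\delta}_D(v)$ equals $f$ on $\Sigma_N$ and is not in $H^1_0(\Omega)$. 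The way around it is to use the major symmetry of $\mathbb{C}^{\delta}(v)$ to rewrite the cross term as $\int_\Omega\mathbb{C}^{\delta}(v)\widehat\nabla u^{\delta}_D(v):\widehat\nabla\widetilde{u}^{\delta}_D(v)$, and then to exploit that $u^{\delta}_D(v)$ satisfies the homogeneous relation $\int_\Omega\mathbb{C}^{\delta}(v)\widehat\nabla u^{\delta}_D(v):\widehat\nabla\psi\,dx=0$ for every $\psi\in H^1_0(\Omega)$ — which is \eqref{eq:weak_form_dirichlet_incl} after reabsorbing the lifting $u^f$. Since $\widetilde{u}^{\delta}_D(v)\in H^1_0(\Omega)$ by construction, the choice $\psi=\widetilde{u}^{\delta}_D(v)$ forces the cross term to vanish, leaving exactly $\tfrac12\int_\Omega\vartheta(\mathbb{C}_1-\mathbb{C}_0)\widehat\nabla u^{\delta}_D(v):\widehat\nabla u^{\delta}_D(v)\,dx$, which is the asserted formula. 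It is worth stressing that the \emph{vanishing} of the cross term here, as opposed to its reabsorption into the leading term in the Neumann case, is precisely what yields the $+\tfrac12$ sign in contrast with the $-\tfrac12$ of Corollary~\ref{cor:der JN}; the remaining estimates are routine consequences of Assumption~\ref{ass:elasticity_tensor}, the Korn–Poincaré inequalities, and Proposition~\ref{prop:FD}.
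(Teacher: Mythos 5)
Your proposal is correct and follows essentially the same route as the paper: the same decomposition of the increment, the same $O(\|\vartheta\|_{L^\infty(\Omega)}^2)$ bookkeeping via Proposition \ref{prop:FD}, and the same conclusion that the cross term $\int_\Omega\mathbb{C}^{\delta}(v)\widehat\nabla\widetilde{u}^{\delta}_D(v):\widehat\nabla u^{\delta}_D(v)\,dx$ vanishes because $\widetilde{u}^{\delta}_D(v)\in H^1_0(\Omega)$ while $u^{\delta}_D(v)$ solves the Dirichlet problem. The only (cosmetic) difference is that the paper kills the cross term by a formal integration by parts, whereas you test the weak formulation $\int_\Omega\mathbb{C}^{\delta}(v)\widehat\nabla u^{\delta}_D(v):\widehat\nabla\psi\,dx=0$, $\psi\in H^1_0(\Omega)$, with $\psi=\widetilde{u}^{\delta}_D(v)$ — the same argument, stated at the natural $H^1$ level of regularity.
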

\begin{proof}
Let us consider 
\begin{equation*}
\begin{aligned}
    J^{\delta}_D(v+\vartheta)-J^{\delta}_D(v)&=\frac{1}{2}\int_{\Omega}\mathbb{C}^{\delta}(v+\vartheta)\widehat{\nabla}{F^{\delta}_D(v+\vartheta)}:\widehat{\nabla}{F^{\delta}_D(v+\vartheta)}\, dx\\ &-\frac{1}{2}\int_{\Omega}\mathbb{C}^{\delta}(v)\widehat{\nabla}{F^{\delta}_D(v)}:\widehat{\nabla}{F^{\delta}_D(v)}\, dx.
\end{aligned}
\end{equation*}
Adding and subtracting suitable quantities in the previous equation, we get
\begin{equation*}
 \begin{aligned}
    &J^{\delta}_D(v+\vartheta)-J^{\delta}_D(v)\\
    &=\frac{1}{2}\int_{\Omega}\left(\mathbb{C}^{\delta}(v+\vartheta)-\mathbb{C}^{\delta}(v)\right)\widehat{\nabla}{\left(F^{\delta}_D(v+\vartheta)-F^{\delta}_D(v)\right)}:\widehat{\nabla}{\left(F^{\delta}_D(v+\vartheta)-F^{\delta}_D(v)\right)}\, dx\\
    &+\int_{\Omega}\left(\mathbb{C}^{\delta}(v+\vartheta)-\mathbb{C}^{\delta}(v)\right)\widehat{\nabla}{\left(F^{\delta}_D(v+\vartheta)-F^{\delta}_D(v)\right)}:\widehat{\nabla}{F^{\delta}_D(v)}\, dx\\
    &+\frac{1}{2}\int_{\Omega}\mathbb{C}^{\delta}(v)\widehat{\nabla}{\left(F^{\delta}_D(v+\vartheta)-F^{\delta}_D(v)\right)}:\widehat{\nabla}{\left(F^{\delta}_D(v+\vartheta)-F^{\delta}_D(v)\right)}\, dx\\
    &+\frac{1}{2}\int_{\Omega}\left(\mathbb{C}^{\delta}(v+\vartheta)-\mathbb{C}^{\delta}(v)\right)\widehat{\nabla}{F^{\delta}_D(v)}:\widehat{\nabla}{F^{\delta}_D(v)}\, dx\\
    &+\int_{\Omega}\mathbb{C}^{\delta}(v)\widehat{\nabla}{\left(F^{\delta}_D(v+\vartheta)-F^{\delta}_D(v)\right)}:\widehat{\nabla}{F^{\delta}_D(v)}\, dx.\\
\end{aligned}   
\end{equation*}
Using $H^1-$estimates for $F^{\delta}_D(v+\vartheta)-F^{\delta}_D(v)$ of Proposition \ref{prop:FD}, we have that the first term of the previous equation behaves as $\|\vartheta\|^3_{L^{\infty}(\Omega)}$ while the second and the third behave as $\|\vartheta\|^2_{L^{\infty}(\Omega)}$. Moreover, by means of equation \eqref{eq:derivative_FD}, and recalling that $\mathbb{C}^{\delta}(v+\vartheta)-\mathbb{C}^{\delta}(v)=\vartheta(\mathbb{C}_1-\mathbb{C}_0)$, we get that
\begin{equation*}
\begin{aligned}
    \left(J^{\delta}_D\right)'(v)[\vartheta]&=\frac{1}{2}\int_{\Omega}\vartheta\left(\mathbb{C}_1-\mathbb{C}_0\right)\widehat{\nabla}{u^{\delta}_D(v)}:\widehat{\nabla}{u^{\delta}_D(v)}\, dx \\
    &+\int_{\Omega}\mathbb{C}^{\delta}(v)\widehat{\nabla}{\widetilde{u}^{\delta}_D(v)}:\widehat{\nabla}{u^{\delta}_D(v)}\, dx.
\end{aligned}
\end{equation*}
Integrating by parts the last term, we find that 
\begin{equation*}
\begin{aligned}
    \int_{\Omega}\mathbb{C}^{\delta}(v)\widehat{\nabla}{\widetilde{u}^{\delta}_D(v)}:\widehat{\nabla}{u^{\delta}_D(v)}\, dx&=\int_{\Sigma_N}(\mathbb{C}^{\delta}(v)\widehat{\nabla}{u^{\delta}_D(v)})\nu \cdot \widetilde{u}^{\delta}_D(v)\, dx\\ &-\int_{\Omega}\textrm{div}\left(\mathbb{C}^{\delta}(v)\widehat{\nabla}{u^{\delta}_D(v)}\right)\cdot \widetilde{u}^{\delta}_D(v)\, dx\equiv 0
\end{aligned}
\end{equation*}
where we have used the fact that $\widetilde{u}^{\delta}_D(v)\in H^1_0(\Omega)$ and $u^{\delta}_D$ is solution to \eqref{pb:dirichlet_incl}. 
\end{proof}
Finally, we provide the optimality condition satisfied by the minimizers of $J_{\delta,\varepsilon}$.
\begin{theorem}\label{th:OC}
Any minimizer $v_{\delta,\varepsilon}$  of $J_{\delta,\varepsilon}$ satisfies the variational inequality
	\begin{equation}
	J'_{\delta,\varepsilon}(v_{\delta,\varepsilon})[\omega - v_{\delta,\varepsilon}] \geq 0, \qquad \forall \omega \in \mathcal{K},
	\label{OC}
	\end{equation}
	where
	\begin{equation}\label{VI}
	\begin{aligned}
		J'_{\delta,\varepsilon}(v)[\vartheta] =& \frac{1}{2}\int_{\Omega}\vartheta(\mathbb{C}_1-\mathbb{C}_0)\widehat{\nabla}{u^{\delta}_D}(v):\widehat{\nabla}{u^{\delta}_D}(v)\, dx
		-\frac{1}{2}\int_{\Omega}\vartheta(\mathbb{C}_1-\mathbb{C}_0)\widehat{\nabla}{u^{\delta}_N}(v):\widehat{\nabla}{u^{\delta}_N}(v)\, dx\\
&+  2 \gamma \varepsilon \int_{\Omega}\widehat\nabla v : \widehat\nabla \vartheta
+ \frac{\gamma}{\varepsilon}\int_{\Omega}(1-2v)\vartheta
\end{aligned}	
	\end{equation}
where $\vartheta$ is any of the elements of the set in \eqref{eq:theta}, and ${u}^{\delta}_D(v)$, ${u}^{\delta}_N(v)$ are solutions to \eqref{pb:dirichlet_incl} and \eqref{pb:neumann_incl}, respectively.	
\end{theorem}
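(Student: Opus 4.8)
The plan is to derive \eqref{OC} by first assembling the Fréchet derivative $J'_{\delta,\varepsilon}(v)[\vartheta]$ from the pieces already computed and then invoking the standard first-order condition for a minimizer over the convex set $\mathcal{K}$. I would start from the splitting
\begin{equation*}
J_{\delta,\varepsilon}(v) = \overline{J}_{ND} + J^{\delta}_N(v) + J^{\delta}_D(v) + R_{\varepsilon}(v), \qquad R_{\varepsilon}(v) := \gamma\!\int_{\Omega}\Big(\varepsilon|\nabla v|^2 + \tfrac{1}{\varepsilon}v(1-v)\Big)\, dx,
\end{equation*}
and treat each summand separately. The constant $\overline{J}_{ND}$ contributes nothing, while $J^{\delta}_N$ and $J^{\delta}_D$ are Fréchet differentiable in $\mathcal{K}$ by Corollaries \ref{cor:der JN} and \ref{cor:der JD}, with derivatives that reproduce \emph{verbatim} the first two terms of \eqref{VI}.

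It then remains only to differentiate the Modica--Mortola term $R_{\varepsilon}$. Since $R_{\varepsilon}$ is a smooth (quadratic) functional of $v$ and $\nabla v$, expanding $R_{\varepsilon}(v+t\vartheta)$ in powers of $t$ and extracting the coefficient of $t$ reproduces exactly the last two contributions of \eqref{VI}, namely
\begin{equation*}
2\gamma\varepsilon\int_{\Omega}\widehat\nabla v : \widehat\nabla\vartheta + \frac{\gamma}{\varepsilon}\int_{\Omega}(1-2v)\vartheta,
\end{equation*}
the quadratic remainder $\gamma\int_{\Omega}\big(\varepsilon|\nabla\vartheta|^2 - \tfrac{1}{\varepsilon}\vartheta^2\big)\, dx$ being of order $\|\vartheta\|_{H^1(\Omega)}^2$; this confirms Fréchet differentiability of $R_{\varepsilon}$ in the $H^1$ topology. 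Summing the four contributions yields precisely the expression \eqref{VI} for $J'_{\delta,\varepsilon}(v)[\vartheta]$.

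For the variational inequality I would exploit the convexity of $\mathcal{K}$. Fix a minimizer $v_\varepsilon$ and an arbitrary $\omega\in\mathcal{K}$, and set $\vartheta = \omega - v_\varepsilon$, which belongs to the set \eqref{eq:theta} since $\vartheta + v_\varepsilon = \omega \in \mathcal{K}$. By convexity, $v_\varepsilon + t\vartheta = (1-t)v_\varepsilon + t\omega \in \mathcal{K}$ for every $t\in[0,1]$, so the scalar map $t\mapsto J_{\delta,\varepsilon}(v_\varepsilon + t\vartheta)$ attains its minimum over $[0,1]$ at $t=0$. Its right derivative at $t=0$, which equals $J'_{\delta,\varepsilon}(v_\varepsilon)[\omega - v_\varepsilon]$ by the differentiability just established, must therefore be nonnegative, giving \eqref{OC}.

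The proof is largely an assembly step, so I do not expect a genuine obstacle; the only point demanding care is that the differentiability of $J^{\delta}_N$ and $J^{\delta}_D$ in Corollaries \ref{cor:der JN} and \ref{cor:der JD} was quantified in the $L^\infty(\Omega)$ norm (their remainder estimates scale like $\|\vartheta\|_{L^\infty(\Omega)}^2$), whereas $R_{\varepsilon}$ is differentiable in $H^1(\Omega)$. Both expansions apply simultaneously because the admissible direction $\omega - v_\varepsilon$, with $\omega, v_\varepsilon \in \mathcal{K}$, is automatically bounded in $L^\infty(\Omega)$ (indeed $|\omega - v_\varepsilon|\leq 1$ a.e.\ in $\Omega$) as well as in $H^1(\Omega)$, so the composite directional derivative along $\omega-v_\varepsilon$ is well defined and the four first variations may be added termwise.
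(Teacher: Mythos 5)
Your proposal is correct and follows essentially the same route as the paper, which likewise assembles $J'_{\delta,\varepsilon}$ from Corollaries \ref{cor:der JN} and \ref{cor:der JD} plus a direct computation of the Modica--Mortola derivative and then invokes the standard first-order condition for a minimizer on the convex set $\mathcal{K}$. Your added observation that the admissible directions $\omega - v_\varepsilon$ are simultaneously bounded in $L^\infty(\Omega)$ and in $H^1(\Omega)$, so that the $L^\infty$-quantified remainders of the corollaries and the $H^1$-quantified remainder of the Ginzburg--Landau term can be combined, is a useful detail the paper leaves implicit.
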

\begin{proof}
The statement of the theorem follows putting together the results in Corollaries \ref{cor:der JN}, \ref{cor:der JD} and simply calculating the Frech\'et derivative of the Modica-Mortola functional. 
Since $J_{\delta,\varepsilon}$ is a continuous and Frech\'{e}t differentiable functional on a convex subset $\mathcal{K}$ of the Hilbert space $H^1(\Omega)$, the optimality conditions for the optimization problem \eqref{minrel} are expressed in terms of the variational inequality \eqref{OC}.
\end{proof}

\section{Reconstruction algorithm}
\label{sec:discretization} 
For numerical purposes, from now on, we assume a polygonal or polyhedral domain $\Omega$.

We denote with $(\mathcal{T}_h)_{0<h\leq h_0}$ a regular triangulation of $\Omega$ and we define 
\begin{equation}\label{calV}
\mathcal{V}_h := \{ v_h \in C(\overline\Omega) \, : \, v_h|_\mathcal{T} \in \mathcal{P}_1(\mathcal{T}), \, \forall \,  \mathcal{T} \in \mathcal{T}_h\}, 
    \end{equation}
where $\mathcal{P}_1(\mathcal{T})$ is the set of polynomials of degree one on $\mathcal{T}$. 
Then, let us denote by
\begin{equation}\label{calKh}
\begin{aligned}
\mathcal{V}_{h,\Sigma_D}:=\mathcal{V}_{h}\cap H^1_{\Sigma_D}(\Omega)&,\qquad \textrm{and}\quad \mathcal{V}_{h,0}:=\mathcal{V}_{h}\cap H^1_{0}(\Omega),\quad \textrm{and}\quad
\mathcal{K}_h &:=  \mathcal{V}_h \cap\mathcal{K}.
\end{aligned}
\end{equation}
For every $h>0$, we represent the discretized version of the solutions of problems \eqref{eq:weak_form_neumann_incl} and \eqref{eq:weak_form_dirichlet_incl} by $u^{\delta}_{N,h}(v_h)$ and $u^{\delta}_{D,h}(v_h)$, respectively.  More specifically, $u^{\delta}_{N,h}(v_h)$ is solution to 
\begin{equation}\label{eq:u_h}
\int_\Omega\mathbb{C}^{\delta}(v_h) \widehat\nabla u^{\delta}_{N,h}(v_h): \widehat\nabla \varphi_h\, dx = \int_{\Sigma_N} g_h\cdot\varphi_h\, d\sigma(x), \quad \forall \varphi_h \in \mathcal{V}_{h,\Sigma_D},
\end{equation}
where $g_h$ is a piecewise linear, continuous approximation of $g$, such that $g_h\to g$ in $L^2(\Sigma_N)$ as $h\to 0$. For the Dirichlet problem, we use a piecewise linear, continuous approximation $u^f_h$ of the lifting term $u^f$,  defined in \eqref{eq:uf}, assuming that  $u^f_h\to u^f$ in $H^1(\Omega)$, as $h\to 0$. For every $h>0$, we define $w^{\delta}_{D,h}:\mathcal{K}_h\to \mathcal{V}_{h,0}$, $w^{\delta}_{D,h}(v_h):=u^{\delta}_{D,h}(v_h)-u^f_h$, where $w^{\delta}_{D,h}(v_h)$ is solution to    
\begin{equation}\label{eq:w_h}
    \int_{\Omega}\mathbb{C}^{\delta}(v_h)\widehat{\nabla}{w^{\delta}_{D,h}(v_h)}:\widehat{\nabla}{\psi_h}\, dx=- \int_{\Omega}\mathbb{C}^{\delta}(v_h)\widehat{\nabla}{u^f_h}:\widehat{\nabla}{\psi_h}\, dx,\qquad \forall \psi_h\in \mathcal{V}_{h,0}.
\end{equation}
We recall that for every $v\in\mathcal{K}$ there exists a sequence $v_h\in \mathcal{K}_h$ such that $v_h \to v$ in $H^1(\Omega)$, see for example \cite{DecEllSty16}. 
\begin{proposition}\label{prop_conv_uN e uD}
Let $h_k$, $v_{h_k}$ be two sequences such that 
$h_k\to 0$, as $k\to+\infty$, and $v_{h_k}\in \mathcal{K}_{h_k}$ with $v_{h_k}\to v$ in $L^1(\Omega)$. Then, as $k\to +\infty$,
\begin{equation*}
    u^{\delta}_{N,h_k}(v_{h_k})\to u^{\delta}_N(v),\qquad \textrm{in}\ H^1_{\Sigma_D}(\Omega),
\end{equation*}
and
\begin{equation*}
    u^{\delta}_{D,h_k}(v_{h_k})\to u^{\delta}_D(v),\qquad \textrm{in}\ H^1(\Omega),
\end{equation*}
with $u^{\delta}_{D,h_k}\lfloor_{\partial\Omega}=u^f_{h_k}\lfloor_{\partial\Omega} \to u^f\lfloor_{\partial\Omega}=u^{\delta}_{D}\lfloor_{\partial\Omega}$ in $L^2(\partial\Omega)$, as $k\to +\infty$.
\end{proposition}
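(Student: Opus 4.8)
The plan is to combine a standard Galerkin convergence argument with the continuous-dependence-on-coefficients technique already used in Proposition \ref{continuity}, handling simultaneously the two limits $h_k\to 0$ and $v_{h_k}\to v$. I would treat the two states separately, beginning with the Neumann state. First I would derive uniform $H^1$ bounds: testing \eqref{eq:u_h} with $\varphi_h=u^{\delta}_{N,h_k}(v_{h_k})$ and invoking the strong convexity of $\mathbb{C}^{\delta}(v_{h_k})$ (Assumption \ref{ass:elasticity_tensor}) together with the Korn and Poincar\'e inequalities \eqref{korn}, \eqref{poincare}, and using that $g_{h_k}\to g$ in $L^2(\Sigma_N)$ (so $\|g_{h_k}\|_{L^2(\Sigma_N)}$ is bounded), I obtain $\|u^{\delta}_{N,h_k}(v_{h_k})\|_{H^1(\Omega)}\le c$ with $c$ independent of $k$. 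Hence, up to a subsequence, $u^{\delta}_{N,h_k}\rightharpoonup u^\ast$ weakly in $H^1(\Omega)$, and by compactness of the trace $u^{\delta}_{N,h_k}\to u^\ast$ in $L^2(\Sigma_N)$.

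Next I would identify $u^\ast=u^{\delta}_N(v)$. Fixing $\varphi\in H^1_{\Sigma_D}(\Omega)$, finite element density supplies $\varphi_{h_k}\in\mathcal{V}_{h_k,\Sigma_D}$ with $\varphi_{h_k}\to\varphi$ in $H^1(\Omega)$. Since $v_{h_k}\to v$ in $L^1(\Omega)$ and $0\le v_{h_k}\le 1$, up to a further subsequence $v_{h_k}\to v$ a.e., so by the uniform bound on the tensor and dominated convergence, exactly as in Proposition \ref{continuity}, $\mathbb{C}^{\delta}(v_{h_k})\widehat\nabla\varphi_{h_k}\to\mathbb{C}^{\delta}(v)\widehat\nabla\varphi$ strongly in $L^2(\Omega)$. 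Pairing this strong limit (via the major symmetry of $\mathbb{C}^{\delta}$) with the weak limit $\widehat\nabla u^{\delta}_{N,h_k}\rightharpoonup\widehat\nabla u^\ast$, and using $g_{h_k}\to g$ and $\varphi_{h_k}\to\varphi$ on $\Sigma_N$, I pass to the limit in \eqref{eq:u_h} to conclude that $u^\ast$ solves \eqref{eq:weak_form_neumann_incl}; uniqueness then gives $u^\ast=u^{\delta}_N(v)$.

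To upgrade to strong convergence I would examine the coercive quadratic form of the difference,
\[
\int_{\Omega}\mathbb{C}^{\delta}(v_{h_k})\widehat\nabla(u^{\delta}_{N,h_k}-u^{\delta}_N):\widehat\nabla(u^{\delta}_{N,h_k}-u^{\delta}_N)\,dx,
\]
and expand it into three terms, each of which tends to the common value $\int_{\Omega}\mathbb{C}^{\delta}(v)\widehat\nabla u^{\delta}_N:\widehat\nabla u^{\delta}_N\,dx$: the pure discrete energy equals $\int_{\Sigma_N}g_{h_k}\cdot u^{\delta}_{N,h_k}\,d\sigma\to\int_{\Sigma_N}g\cdot u^{\delta}_N\,d\sigma$ by the trace convergence above, while the mixed term and the frozen-coefficient term converge by the weak–strong pairing and by dominated convergence, respectively. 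Thus the left-hand side tends to zero, and coercivity (again via \eqref{korn}, \eqref{poincare}) forces $u^{\delta}_{N,h_k}\to u^{\delta}_N$ in $H^1(\Omega)$; since the limit is unique, the whole sequence converges. For the Dirichlet state I would run the identical scheme on $w^{\delta}_{D,h_k}(v_{h_k})\in\mathcal{V}_{h_k,0}$, the only difference being that the right-hand side of \eqref{eq:w_h} is $-\int_{\Omega}\mathbb{C}^{\delta}(v_{h_k})\widehat\nabla u^f_{h_k}:\widehat\nabla\psi_{h_k}$; the hypothesis $u^f_{h_k}\to u^f$ in $H^1(\Omega)$ yields strong convergence of $\mathbb{C}^{\delta}(v_{h_k})\widehat\nabla u^f_{h_k}$, lets me pass to the limit and identify it with the solution of \eqref{eq:weak_form_dirichlet_incl}, so $w^{\delta}_{D,h_k}\to w^{\delta}_D(v)$ in $H^1_0(\Omega)$. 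Adding back the lifting gives $u^{\delta}_{D,h_k}=w^{\delta}_{D,h_k}+u^f_{h_k}\to w^{\delta}_D(v)+u^f=u^{\delta}_D(v)$ in $H^1(\Omega)$, and the boundary identity follows from continuity of the trace applied to $u^f_{h_k}\to u^f$.

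The main obstacle is the simultaneous management of the two limits: the coefficient $\mathbb{C}^{\delta}(v_{h_k})$ converges only a.e. (equivalently strongly in every $L^p$, $p<\infty$, by its uniform bound) whereas the state gradients converge merely weakly in $L^2$, so the bilinear form must be passed to the limit by carefully isolating a strongly convergent factor from the weakly convergent one. The second delicate point is the upgrade from weak to strong $H^1$ convergence—indispensable if the energy functionals $J^{\delta}_N,J^{\delta}_D$ are to converge along the discretization—which rests on the energy identity together with the compactness of the trace on $\Sigma_N$.
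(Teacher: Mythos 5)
Your argument is correct, but it is structured quite differently from the paper's. You prove convergence by compactness: uniform $H^1$ bounds, extraction of a weakly convergent subsequence, identification of the weak limit through a weak--strong pairing in the bilinear form (using density of $\mathcal{V}_{h,\Sigma_D}$ and dominated convergence for the coefficient), and finally an upgrade to strong convergence via the energy identity $\int_\Omega\mathbb{C}^{\delta}(v_{h_k})\widehat\nabla u^{\delta}_{N,h_k}:\widehat\nabla u^{\delta}_{N,h_k}=\int_{\Sigma_N}g_{h_k}\cdot u^{\delta}_{N,h_k}$ together with trace compactness; the whole-sequence statement then follows from uniqueness of the limit. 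The paper instead proceeds by a direct quantitative error estimate of C\'ea/Strang type: it subtracts the discrete and continuous weak formulations (observing that $\mathcal{V}_{h,\Sigma_D}\subset H^1_{\Sigma_D}$, so the continuous equation may be tested with discrete functions), inserts an interpolant $\overline{u}_{N,k}\to u^{\delta}_N$ in $H^1$, and tests with $u^{\delta}_{N,k}-\overline{u}_{N,k}$ to obtain
\begin{equation*}
\|u^{\delta}_{N,k}-\overline{u}_{N,k}\|_{H^1(\Omega)}\leq c\left[\|u^{\delta}_N-\overline{u}_{N,k}\|_{H^1(\Omega)}+\|(\mathbb{C}^{\delta}-\mathbb{C}^{\delta}_k)\widehat{\nabla}u^{\delta}_N\|_{L^2(\Omega)}+\|g_k-g\|_{L^2(\Sigma_N)}\right],
\end{equation*}
each term of which vanishes (the middle one by dominated convergence, exactly as in your argument). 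The paper's route buys an explicit error bound and strong convergence in one step, with no subsequence extraction, no weak limits to identify, and no need for trace compactness or the energy identity; your route is closer in spirit to the paper's own Proposition \ref{prop:continuity_neumann} and generalizes more readily to settings where a good interpolant is not available, at the cost of the weak--strong bookkeeping and the final uniqueness-of-limit argument to recover convergence of the full sequence. Both treatments of the Dirichlet state via $w^{\delta}_{D,h_k}=u^{\delta}_{D,h_k}-u^f_{h_k}$ coincide in substance.
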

\begin{proof}
For the sake of simplicity, let us denote by $v_k:=v_{h_k}$, $u^{\delta}_{N,k}:=u^{\delta}_{N,h_k}(v_k)$, $u^{\delta}_{D,k}:=u^{\delta}_{D,h_k}(v_k)$, $g_k:=g_{h_k}$, $u^f_k:=u^f_{h_k}$, and $\mathbb{C}^{\delta}_k:=\mathbb{C}^{\delta}(v_k)$. Since, by hypothesis, $v_{k}\to v$ in $L^1(\Omega)$, then it holds $v_k\to v$\, a.e. in $\Omega$. 
Consider the weak formulation for $u^{\delta}_{N,k}$ and $u^{\delta}_N$, that is
\begin{equation*}
    \int_{\Omega}\mathbb{C}^{\delta}_k\widehat{\nabla}{u^{\delta}_{N,k}}:\widehat{\nabla}{\varphi}\, dx=\int_{\Sigma_N}g_k\cdot \varphi\, d\sigma(x),\qquad \forall\varphi\in \mathcal{V}_{h,\Sigma_D},
\end{equation*}
and
\begin{equation*}
    \int_{\Omega}\mathbb{C}^{\delta}\widehat{\nabla}{u^{\delta}_{N}}:\widehat{\nabla}{\varphi}\, dx=\int_{\Sigma_N}g\cdot \varphi\, d\sigma(x),\qquad \forall\varphi\in H^1_{\Sigma_D}.
\end{equation*}
Note that, since $\mathcal{V}_{h,\Sigma_D}\subset H^1_{\Sigma_D}$, we have that the last equation holds also for all $\varphi\in \mathcal{V}_{h,\Sigma_D}$. Then, subtracting the two equations, we get
\begin{equation*}
\int_{\Omega}\mathbb{C}^{\delta}_k\widehat{\nabla}{u^{\delta}_{N,k}}:\widehat{\nabla}{\varphi}\, dx-\int_{\Omega}\mathbb{C}^{\delta}\widehat{\nabla}{u^{\delta}_N}:\widehat{\nabla}{\varphi}\, dx=\int_{\Sigma_N}(g_k-g)\cdot\varphi\, d\sigma(x),\,\qquad \forall\varphi\in \mathcal{V}_{h,\Sigma_D},    
\end{equation*}
hence
\begin{equation}\label{eq:aux6}
    \int_{\Omega}\mathbb{C}^{\delta}_k\widehat{\nabla}{(u^{\delta}_{N,k}-u^{\delta}_N)}:\widehat{\nabla}{\varphi}\, dx-\int_{\Omega}(\mathbb{C}^{\delta}-\mathbb{C}^{\delta}_k)\widehat{\nabla}{u^{\delta}_N}:\widehat{\nabla}{\varphi}\, dx=\int_{\Sigma_N}(g_k-g)\cdot \varphi\, d\sigma(x).
\end{equation}
Let us choose $\overline{u}_{N,k}\in \mathcal{V}_{h,\Sigma_D}$ such that $\overline{u}_{N,k}\to u^{\delta}_N$ in $H^1_{\Sigma_D}(\Omega)$. Adding and subtracting suitable terms in \eqref{eq:aux6}, we get, for all $\varphi\in\mathcal{V}_{h,\Sigma_D}$,
\begin{equation*}
\begin{aligned}
    \int_{\Omega}\mathbb{C}^{\delta}_k\widehat{\nabla}{(u^{\delta}_{N,k}-\overline{u}_{N,k})}:\widehat{\nabla}{\varphi}\, dx&=\int_{\Omega}\mathbb{C}^{\delta}_k\widehat{\nabla}{(u^{\delta}_N-\overline{u}_{N,k})}:\widehat{\nabla}{\varphi}\, dx\\
    &+ \int_{\Omega}(\mathbb{C}^{\delta}-\mathbb{C}^{\delta}_k)\widehat{\nabla}{u^{\delta}_N}:\widehat{\nabla}{\varphi}\, dx\\
    &+\int_{\Sigma_N}(g_k-g)\cdot \varphi\, d\sigma(x).
\end{aligned}
\end{equation*}
Hence, choosing $\varphi=u^{\delta}_{N,k}-\overline{u}_{N,k}$, we get
\begin{equation}\label{eq:aux7}
    \|u^{\delta}_{N,k}-\overline{u}_{N,k}\|_{H^1(\Omega)}\leq c\left[ \|u^{\delta}_N-\overline{u}_{N,k}\|_{H^1(\Omega)}+\|(\mathbb{C}^{\delta}-\mathbb{C}^{\delta}_k)\widehat{\nabla}{u^{\delta}_N}\|_{L^2(\Omega)}+\|g_k-g\|_{L^2(\Omega)}\right],
\end{equation}
where the constant $c$ is independent on $k$.
Note that, thanks to the fact that $v_k\to v$ a.e. in $\Omega$, by means of the Lebesgue dominated convergence theorem $\|(\mathbb{C}^{\delta}-\mathbb{C}^{\delta}_k)\widehat{\nabla}{u^{\delta}_N}\|_{L^2(\Omega)}\to 0$, as $k\to +\infty$. Then, using this result in \eqref{eq:aux7} together with the convergence of $\overline{u}_{N,k}\to u^{\delta}_N$ in $H^1(\Omega)$, and $g_k\to g$ in $L^2(\Sigma_N)$, we get that the right-hand side of \eqref{eq:aux7} goes to zero as $k\to +\infty$. Then, the first assertion of the theorem follows.\\
For $u^{\delta}_{D,k}$, we make analogous calculations which involve $w^{\delta}_{D,k}=u^{\delta}_{D,k}-u^f_k$. Writing the weak formulations for $w^{\delta}_{D,k}$ and $w^{\delta}_D$ and noticing that $\mathcal{V}_{h,0}\subset H^1_0(\Omega)$, we have that
\begin{equation*}
    \int_{\Omega}\mathbb{C}^{\delta}_k\widehat{\nabla}{w^{\delta}_{D,k}}:\widehat{\nabla}{\psi}\, dx=-\int_{\Omega}\mathbb{C}^{\delta}_k\widehat{\nabla}{u^f_k}:\widehat{\nabla}{\psi}\, dx,\qquad \forall\psi\in \mathcal{V}_{h,0},
\end{equation*}
and
\begin{equation*}
    \int_{\Omega}\mathbb{C}^{\delta}\widehat{\nabla}{w^{\delta}_{D}}:\widehat{\nabla}{\psi}\, dx=-\int_{\Omega}\mathbb{C}^{\delta}\widehat{\nabla}{u^f}:\widehat{\nabla}{\psi}\, dx,\qquad \forall\psi\in \mathcal{V}_{h,0}.
\end{equation*}
Subtracting the two previous equations, and then adding and subtracting suitable terms, we find
\begin{equation*}
\begin{aligned}
    &\int_{\Omega}\mathbb{C}^{\delta}_k\widehat{\nabla}{(w^{\delta}_{D,k}-w^{\delta}_D)}:\widehat{\nabla}{\psi}\, dx+\int_{\Omega}(\mathbb{C}^{\delta}_k-\mathbb{C}^{\delta})\widehat{\nabla}{w^{\delta}_D}:\widehat{\nabla}{\psi}\, dx\\
    &=-\int_{\Omega}\mathbb{C}^{\delta}_k\widehat{\nabla}{(u^f_k-u^f)}:\widehat{\nabla}{\psi}\, dx+ \int_{\Omega}(\mathbb{C}^{\delta}-\mathbb{C}^{\delta}_k)\widehat{\nabla}{u^f}:\widehat{\nabla}{\psi}\, dx,\qquad \forall\psi\in \mathcal{V}_{h,0}.
\end{aligned}
\end{equation*}
Let us choose $\overline{w}_{D,k}\to w^{\delta}_D$ in $H^1_0(\Omega)$, as $k\to +\infty$. Then, we get that the previous equation is equivalent to 
\begin{equation*}
\begin{aligned}
    \int_{\Omega}\mathbb{C}^{\delta}_k\widehat{\nabla}{(w^{\delta}_{D,k}-\overline{w}_{D,k})}:\widehat{\nabla}{\psi}\, dx=
    &-\int_{\Omega}\mathbb{C}^{\delta}_k\widehat{\nabla}{(\overline{w}_{D,k}-w^{\delta}_D)}:\widehat{\nabla}{\psi}\, dx\\
    &-\int_{\Omega}(\mathbb{C}^{\delta}_k-\mathbb{C}^{\delta})\widehat{\nabla}{w^{\delta}_D}:\widehat{\nabla}{\psi}\, dx\\
    &-\int_{\Omega}\mathbb{C}^{\delta}_k\widehat{\nabla}{(u^f_k-u^f)}:\widehat{\nabla}{\psi}\, dx\\
    &+ \int_{\Omega}(\mathbb{C}^{\delta}-\mathbb{C}^{\delta}_k)\widehat{\nabla}{u^f}:\widehat{\nabla}{\psi}\, dx,\qquad \forall\psi\in \mathcal{V}_{h,0}.
\end{aligned}
\end{equation*}
Choosing $\psi=w^{\delta}_{D,k}-\overline{w}_{D,k}$, we get that
\begin{equation*}
\begin{aligned}
    \|w^{\delta}_{D,k}-\overline{w}_{D,k}\|_{H^1(\Omega)}\leq& c\Big[\|\overline{w}_{D,k}-w^{\delta}_D\|_{H^1(\Omega)}+\|(\mathbb{C}^{\delta}_k-\mathbb{C}^{\delta})\widehat{\nabla}{w^{\delta}_D}\|_{L^2(\Omega)}\\
    &+\|u^f_k-u^f\|_{H^1(\Omega)}+\|(\mathbb{C}^{\delta}_k-\mathbb{C}^{\delta})\widehat{\nabla}{u^f}\|_{L^2(\Omega)}\Big],
\end{aligned}
\end{equation*}
where the constant $c$ is independent on $k$. Arguing as in the previous case, we get that the right-hand side of the last equation is going to zero, hence, since $\overline{w}_{D,k}\to w^{\delta}_D$ in $H^1_0(\Omega)$ we have $\|w^{\delta}_{D,k}-w^{\delta}_D\|_{H^1(\Omega)}\to 0$\, as $k\to 0$. Now, since $w^{\delta}_{D,k}=u^{\delta}_{D,k}-u^f_k$ and $w^{\delta}_D=u^{\delta}_D-u^f$, we have that
\begin{equation}
    \|w^{\delta}_{D,k}-w^{\delta}_D\|_{H^1(\Omega)}\geq \Big| \|u^{\delta}_{D,k}-u^{\delta}_D\|_{H^1(\Omega)}-\|u^f_k-u^f\|_{H^1(\Omega)}\Big|,
\end{equation}
hence, since $u^f_k\to u^f$, as $k\to +\infty$ and $w^{\delta}_{D,k}\to w^{\delta}_D$ in $H^1(\Omega)$, we find that $u^{\delta}_{D,k}\to u^{\delta}_D$ in $H^1(\Omega)$.
\end{proof}
Denote by $J_{\delta,\varepsilon,h}:\mathcal{K}_h\to \mathbb{R}$ the approximation of $J_{\delta,\varepsilon}$, defined in \eqref{minrel}, hence we consider the problem
\begin{equation}\label{eq:minimum_probl_discretized}
        \min\limits_{v_h\in\mathcal{K}_h}J_{\delta,\varepsilon,h}(v_h):=J^{\delta}_{N,h}(v_h)+J^{\delta}_{D,h}(v_h)+\overline{J}_{ND,h}+\gamma \int_{\Omega}\left(\varepsilon|\nabla v_h|^2+\frac{1}{\varepsilon}v_h(1-v_h)  \right)\, dx,
\end{equation}
where 
\begin{equation*}
    \begin{aligned}
    J^{\delta}_{N,h}(v_h)&=\frac{1}{2}\int_{\Omega}\mathbb{C}^{\delta}(v_h)\widehat{\nabla}{u^{\delta}_{N,h}(v_h)}:\widehat{\nabla}{u^{\delta}_{N,h}(v_h)}\, dx,\\
    J^{\delta}_{D,h}(v_h)&=\frac{1}{2}\int_{\Omega}\mathbb{C}^{\delta}(v_h)\widehat{\nabla}{u^{\delta}_{D,h}(v_h)}:\widehat{\nabla}{u^{\delta}_{D,h}(v_h)}\, dx,\\
    \overline{J}_{ND,h}&=\int_{\Sigma_N}f_h\cdot g_h\, d\sigma(x).
    \end{aligned}
\end{equation*}

\begin{theorem}\label{th:5.2}
For every $\delta, \varepsilon>0$, problem \eqref{eq:minimum_probl_discretized} has a solution $v_h\in \mathcal{K}_h$.\\
Moreover, let $h_k$ be a sequence such that $h_k\to 0$, as $k\to +\infty$. Then, any $v_{h_k}$ has a subsequence strongly convergent in $H^1(\Omega)$ and a.e. in $\Omega$ to a minimum of $J_{\delta,\varepsilon}$. 
\end{theorem}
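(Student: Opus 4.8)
The plan is to treat the two assertions separately: existence for each fixed discretization by a compactness (Weierstrass) argument, and the $h_k\to 0$ convergence by a $\Gamma$-convergence style liminf/limsup sandwich that rests entirely on Proposition \ref{prop_conv_uN e uD}. For the existence statement I would first note that $\mathcal{K}_h=\mathcal{V}_h\cap\mathcal{K}$ is a compact convex subset of the finite-dimensional space $\mathcal{V}_h$: the constraints $0\le v_h\le 1$ and $v_h=0$ in $\Omega^{d_0}$ confine the nodal values of the $\mathcal{P}_1$ function to a compact box, so $\mathcal{K}_h$ is closed and bounded. The functional $J_{\delta,\varepsilon,h}$ is continuous on $\mathcal{K}_h$: the Modica--Mortola term depends smoothly on the nodal values, while continuity of $v_h\mapsto J^{\delta}_{N,h}(v_h)+J^{\delta}_{D,h}(v_h)$ follows from continuous dependence of the discrete states, since $v_h^{(n)}\to v_h$ in $\mathcal{V}_h$ gives $\mathbb{C}^{\delta}(v_h^{(n)})\to\mathbb{C}^{\delta}(v_h)$ in $L^{\infty}(\Omega)$ and the coercivity/Lax--Milgram estimates used in Proposition \ref{continuity} then yield $H^1$ convergence of $u^{\delta}_{N,h}$, $u^{\delta}_{D,h}$. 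Existence of a minimizer follows from the Weierstrass theorem.

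For the convergence part the first step is a uniform a priori bound. Since $0\in\mathcal{K}_{h_k}$ and $v_{h_k}$ minimizes $J_{\delta,\varepsilon,h_k}$, I have $J_{\delta,\varepsilon,h_k}(v_{h_k})\le J_{\delta,\varepsilon,h_k}(0)$, and the right-hand side is bounded uniformly in $k$ because the discrete states at $v_h=0$ obey the $h$-independent a priori bounds of the type \eqref{eq:H1est extended pb} (the coercivity and continuity constants depend only on $\xi_0$, $\delta$ and the Korn/Poincar\'e constants) and $\overline{J}_{ND,h_k}$ converges. Using $J^{\delta}_{N,h},J^{\delta}_{D,h}\ge 0$, the boundedness of $\overline{J}_{ND,h_k}$, and $0\le v_{h_k}\le 1$, this gives a uniform bound on $\gamma\varepsilon\|\nabla v_{h_k}\|_{L^2(\Omega)}^2$, hence on $\|v_{h_k}\|_{H^1(\Omega)}$. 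Passing to a subsequence, $v_{h_k}\rightharpoonup\bar v$ in $H^1(\Omega)$ and, by Rellich, $v_{h_k}\to\bar v$ in $L^2(\Omega)$ and a.e.; pointwise convergence then forces $0\le\bar v\le 1$ and $\bar v=0$ in $\Omega^{d_0}$, so $\bar v\in\mathcal{K}$.

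The core of the argument is matching a lower and an upper bound. For the liminf inequality I observe that $v_{h_k}\to\bar v$ in $L^1(\Omega)$, so Proposition \ref{prop_conv_uN e uD} gives $u^{\delta}_{N,h_k}(v_{h_k})\to u^{\delta}_N(\bar v)$ and $u^{\delta}_{D,h_k}(v_{h_k})\to u^{\delta}_D(\bar v)$ in $H^1(\Omega)$, whence $J^{\delta}_{N,h_k}(v_{h_k})\to J^{\delta}_N(\bar v)$ and $J^{\delta}_{D,h_k}(v_{h_k})\to J^{\delta}_D(\bar v)$; the potential term converges by dominated convergence (since $0\le v_{h_k}(1-v_{h_k})\le 1/4$), and the gradient term is weakly lower semicontinuous, yielding $J_{\delta,\varepsilon}(\bar v)\le\liminf_k J_{\delta,\varepsilon,h_k}(v_{h_k})$. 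For the upper bound I take a minimizer $v^{\ast}$ of $J_{\delta,\varepsilon}$ (Proposition \ref{prop: existence_min_J_de}) and a recovery sequence $v^{\ast}_{h_k}\in\mathcal{K}_{h_k}$ with $v^{\ast}_{h_k}\to v^{\ast}$ in $H^1(\Omega)$ (the density recalled before Proposition \ref{prop_conv_uN e uD}); Proposition \ref{prop_conv_uN e uD} again gives $J_{\delta,\varepsilon,h_k}(v^{\ast}_{h_k})\to J_{\delta,\varepsilon}(v^{\ast})$, and minimality yields $J_{\delta,\varepsilon,h_k}(v_{h_k})\le J_{\delta,\varepsilon,h_k}(v^{\ast}_{h_k})$. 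Chaining the two bounds forces all inequalities to be equalities, so $\bar v$ minimizes $J_{\delta,\varepsilon}$ and $J_{\delta,\varepsilon,h_k}(v_{h_k})\to J_{\delta,\varepsilon}(\bar v)$.

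Finally I would upgrade weak to strong $H^1$ convergence. Since $J_{\delta,\varepsilon,h_k}(v_{h_k})\to J_{\delta,\varepsilon}(\bar v)$ while the Kohn--Vogelius and the potential contributions have already been shown to converge to their limits, the gradient contribution must converge as well, i.e. $\|\nabla v_{h_k}\|_{L^2(\Omega)}\to\|\nabla\bar v\|_{L^2(\Omega)}$; combined with $\nabla v_{h_k}\rightharpoonup\nabla\bar v$ in $L^2(\Omega)$, convergence of the norms upgrades this to $\nabla v_{h_k}\to\nabla\bar v$ strongly in $L^2(\Omega)$, and with $v_{h_k}\to\bar v$ in $L^2(\Omega)$ this gives strong convergence in $H^1(\Omega)$. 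I expect the main obstacle to be verifying that the hypotheses of Proposition \ref{prop_conv_uN e uD} are met along the extracted subsequence---in particular extracting the $L^1$ convergence of $v_{h_k}$ from the a priori $H^1$ bound and controlling the competitor value $J_{\delta,\varepsilon,h_k}(0)$ uniformly---since the entire liminf/limsup sandwich and the closing norm-convergence step depend on the state convergence supplied by that proposition.
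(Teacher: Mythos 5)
Your proposal is correct and follows essentially the same route as the paper: finite-dimensionality for existence, a uniform $H^1$ bound yielding a weakly convergent subsequence with $L^1$/a.e.\ convergence, Proposition \ref{prop_conv_uN e uD} for state convergence, a liminf/recovery-sequence sandwich to identify the limit as a minimizer, and convergence of the gradient norms to upgrade weak to strong $H^1$ convergence. The only (immaterial) differences are that you obtain the a priori bound by testing against the competitor $0\in\mathcal{K}_{h_k}$ rather than invoking the argument of Proposition \ref{prop: existence_min_J_de}, and you compare against a recovery sequence for a continuous minimizer $v^{\ast}$ where the paper tests against an arbitrary $\eta\in\mathcal{K}$.
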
 
\begin{proof}
The existence of a minimum follows straightforwardly, thanks to the fact that the analysis is addressed in a finite-dimensional space.\\
We prove the second part of the statement. Let $v_k:=v_{h_k}\in \mathcal{K}_{h_k}$ a minimizing sequence for \eqref{eq:minimum_probl_discretized}. Hence, as in the continuous case, see Proposition \ref{prop: existence_min_J_de}, we get that $v_k$ is bounded in $H^1(\Omega)$, hence, there exists a subsequence (still denoted by $v_k$) such that $v_k\rightharpoonup v$ in $H^1(\Omega)$ and $v_k\to v$ in $L^2(\Omega)$. Then, it follows that $v_k\to v$ in $L^1(\Omega)$ and $v_k\to v$ a.e. in $\Omega$. Thanks to Proposition \ref{prop_conv_uN e uD}, we have that
\begin{equation*}
    u^{\delta}_{N,k}(v_k)\to u^{\delta}_N(v),\qquad \textrm{in}\ H^1_{\Sigma_D}(\Omega),
\end{equation*}
and
\begin{equation*}
    u^{\delta}_{D,k}(v_k)\to u^{\delta}_D(v),\qquad \textrm{in}\ H^1(\Omega),
\end{equation*}
with $u^{\delta}_{D,k}\lfloor_{\partial\Omega}\to u^{\delta}_{D}\lfloor_{\partial\Omega}$ in $L^2(\partial\Omega)$. Let us show that $v$ is a minimum for $J_{\delta,\varepsilon}$. Let $\eta\in\mathcal{K}$ be arbitrary and choose $\eta_k:=\eta_{h_k}\in \mathcal{K}_{h_k}$ such that $\eta_k\to \eta$ in $H^1(\Omega)$. Since $v_k$ is a minimizing sequence, we have that 
\begin{equation*}
    J_{\delta,\varepsilon,k}(v_k)\leq J_{\delta,\varepsilon,k}(\eta_k).
\end{equation*}
Thanks to the lower semicontinuity of the norm, we have that $\|\nabla v\|^2_{L^2(\Omega)}\leq \liminf\limits_{k\to+\infty}\|\nabla v_k\|^2_{L^2(\Omega)}$. Hence,
\begin{equation*}
    J_{\delta,\varepsilon}(v)\leq \liminf\limits_{k\to+\infty} J_{\delta,\varepsilon,k}(v_k)\leq \liminf\limits_{k\to+\infty} J_{\delta,\varepsilon,k}(\eta_k)=\lim\limits_{k\to+\infty}J_{\delta,\varepsilon,k}(\eta_k)=J_{\delta,\varepsilon}(\eta),
\end{equation*}
that is, since $\eta$ is arbitrary, $J_{\delta,\varepsilon}(v)=\inf\limits_{\eta\in\mathcal{K}}J_{\delta,\varepsilon}(\eta)$.
We now show that $v_k$ converges strongly to $v$ in $H^1(\Omega)$. Let $\overline{v}_k\in\mathcal{K}_{h_k}$ such that $\overline{v}_k\to v$ in $H^1(\Omega)$. Therefore,
\begin{equation*}
    J_{\delta,\varepsilon}(v)\leq \liminf\limits_{k\to\infty}J_{\delta,\varepsilon,k}(v_k)\leq \liminf\limits_{k\to\infty}J_{\delta,\varepsilon,k}(\overline{v}_k)=\lim\limits_{k\to+\infty}J_{\delta,\varepsilon,k}(\overline{v}_k)=J_{\delta,\varepsilon}(v),
\end{equation*}
that is $J_{\delta,\varepsilon}(v)=\lim\limits_{k\to+\infty}J_{\delta,\varepsilon,k}(v_k)$. \\
Finally, it is simple to show that $\|\nabla v_k\|^2_{L^2(\Omega)}\to \|\nabla v\|^2_{L^2(\Omega)}$. In fact, 
\begin{equation*}
\begin{aligned}
    \gamma\varepsilon\int_{\Omega}|\nabla v_k|^2\, dx&= J_{\delta,\varepsilon,k}(v_k)-\frac{\gamma}{\varepsilon}\int_{\Omega}v_k(1-v_k)\, dx\\
    &-\frac{1}{2}\int_{\Omega}\mathbb{C}^{\delta}(v_k)\widehat{\nabla}{u^{\delta}_{N,k}(v_k)}:\widehat{\nabla}{u^{\delta}_{N,k}(v_k)}\, dx\\
    &-\frac{1}{2}\int_{\Omega}\mathbb{C}^{\delta}(v_k)\widehat{\nabla}{u^{\delta}_{D,k}(v_k)}:\widehat{\nabla}{u^{\delta}_{D,k}(v_k)}\, dx - \int_{\Sigma_N}f_k\cdot g_k\, d\sigma(x).
\end{aligned}    
\end{equation*}
Thanks to the continuity results (Proposition \ref{prop_conv_uN e uD}) and the dominated convergence theorem, we get that the right-hand side of the previous equation goes to, as $k\to +\infty$,
\begin{equation*}
    \begin{aligned}
    J_{\delta,\varepsilon}(v)&-\frac{\gamma}{\varepsilon}\int_{\Omega}v(1-v)\, dx\\
    &-\frac{1}{2}\int_{\Omega}\mathbb{C}^{\delta}\widehat{\nabla}{u^{\delta}_{N}(v)}:\widehat{\nabla}{u^{\delta}_{N}(v)}\, dx\\
    &-\frac{1}{2}\int_{\Omega}\mathbb{C}^{\delta}\widehat{\nabla}{u^{\delta}_{D}(v)}:\widehat{\nabla}{u^{\delta}_{D}(v)}\, dx - \int_{\Omega}f\cdot g\, d\sigma(x)=\gamma\varepsilon\int_{\Omega}|\nabla v|^2\, dx.
\end{aligned}    
\end{equation*}
Therefore, $\|v_k-v\|_{H^1(\Omega)}\to 0$, as $k\to 0$.
\end{proof}
For the implementation of a numerical algorithm, we use the discretized version of the optimality condition \eqref{VI}, that is we search for $v_h\in\mathcal{K}_h$ satisfying
\begin{equation}\label{eq:discrete_opt_cond}
    J'_{\delta,\varepsilon,h}(v_h)[\omega_h-v_h]\geq 0,\qquad \forall\omega_h\in\mathcal{K}_h.
\end{equation}
Analogously to the continuous case, one can prove that 
\begin{equation*}
\begin{aligned}
    J'_{\delta,\varepsilon,h}(v_h)[\omega_h-v_h]&=\frac{1}{2}\int_{\Omega}(\omega_h-v_h)(\mathbb{C}_1-\mathbb{C}_0)\widehat{\nabla}{u^{\delta}_{D,h}(v_h)}: \widehat{\nabla}{u^{\delta}_{D,h}(v_h)}\, dx\\
    &-\frac{1}{2}\int_{\Omega}(\omega_h-v_h)(\mathbb{C}_1-\mathbb{C}_0)\widehat{\nabla}{u^{\delta}_{N,h}(v_h)}: \widehat{\nabla}{u^{\delta}_{N,h}(v_h)}\, dx\\
    &+2\gamma\varepsilon\int_{\Omega}\nabla v_h\cdot \nabla(\omega_h-v_h)\, dx +\frac{\gamma}{\varepsilon}\int_{\Omega}(1-2v_h)(\omega_h-v_h)\, dx\geq 0\, 
\end{aligned}
\end{equation*}
for all $\omega_h\in \mathcal{K}_h$.\\
Let us prove the following theorem.
\begin{theorem}
Let $h_k$ be a sequence such that $h_k\to 0$, as $k\to+\infty$, and $v_{h_k}$ a sequence satisfying \eqref{eq:discrete_opt_cond}. Then, there exists a subsequence of $v_{h_k}$ converging strongly in $H^1(\Omega)$ and a.e. in $\Omega$ to a solution $v$ of the continuous optimality condition \eqref{VI}. 
\end{theorem}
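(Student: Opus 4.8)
The plan is to pass to the limit directly in the discrete variational inequality \eqref{eq:discrete_opt_cond}, paralleling the passage to the limit carried out for the functional values in Theorem \ref{th:5.2}, while exploiting the sign of the gradient term so that weak lower semicontinuity works in our favour.

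First I would establish a uniform $H^1(\Omega)$ bound on $v_{h_k}$. Since $v_{h_k}\in\mathcal{K}_{h_k}$, the constraint $0\le v_{h_k}\le 1$ gives at once $\|v_{h_k}\|_{L^2(\Omega)}\le|\Omega|^{1/2}$, so only the gradient needs to be controlled. To this end I would test \eqref{eq:discrete_opt_cond} with the admissible choice $\omega_{h_k}=0\in\mathcal{K}_{h_k}$, i.e.\ $\omega_{h_k}-v_{h_k}=-v_{h_k}$, and isolate the term $2\gamma\varepsilon\int_\Omega|\nabla v_{h_k}|^2$. The remaining integrals are bounded uniformly in $k$: one has $|v_{h_k}|\le 1$, the zeroth-order contribution $\frac{\gamma}{\varepsilon}\int_\Omega(1-2v_{h_k})v_{h_k}$ is bounded pointwise, and the energy densities are controlled through the $H^1$-bounds $\|u^{\delta}_{N,h_k}\|_{H^1(\Omega)}\le c$ and $\|u^{\delta}_{D,h_k}\|_{H^1(\Omega)}\le c$ coming from \eqref{eq:u_h} and \eqref{eq:w_h} (the coercivity constant of $\mathbb{C}^{\delta}(v_{h_k})$ is bounded below by a constant depending on $\delta$ but not on $v_{h_k}$, so there is no circularity). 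This yields $\|\nabla v_{h_k}\|_{L^2(\Omega)}\le c$, hence $v_{h_k}$ is bounded in $H^1(\Omega)$. Extracting a subsequence (not relabelled), $v_{h_k}\rightharpoonup v$ in $H^1(\Omega)$, $v_{h_k}\to v$ in $L^2(\Omega)$ and a.e.\ in $\Omega$; the pointwise constraints pass to the limit, so $v\in\mathcal{K}$. Proposition \ref{prop_conv_uN e uD} then gives $u^{\delta}_{N,h_k}(v_{h_k})\to u^{\delta}_N(v)$ in $H^1_{\Sigma_D}(\Omega)$ and $u^{\delta}_{D,h_k}(v_{h_k})\to u^{\delta}_D(v)$ in $H^1(\Omega)$.

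Next, fixing an arbitrary $\omega\in\mathcal{K}$ and choosing $\omega_{h_k}\in\mathcal{K}_{h_k}$ with $\omega_{h_k}\to\omega$ in $H^1(\Omega)$, I would pass to the limit in \eqref{eq:discrete_opt_cond} term by term. For the two energy terms I would use the polarization identity $(\mathbb{C}_1-\mathbb{C}_0)A:A-(\mathbb{C}_1-\mathbb{C}_0)B:B=(\mathbb{C}_1-\mathbb{C}_0)(A-B):(A+B)$ together with $|\omega_{h_k}-v_{h_k}|\le 1$ and the strong $L^2$ convergence of the symmetrized gradients from Proposition \ref{prop_conv_uN e uD}; the residual factor $[(\omega_{h_k}-v_{h_k})-(\omega-v)]$ tested against the fixed $L^1$ density is handled by the subsequence principle (every subsequence admits an a.e.-convergent, $L^\infty$-dominated further subsequence, so Lebesgue's theorem applies). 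Hence these terms converge to their continuous counterparts in \eqref{VI}. The zeroth-order term converges as a product of two strongly $L^2$-convergent, $L^\infty$-bounded factors. The delicate term is the gradient one: writing $2\gamma\varepsilon\int_\Omega\nabla v_{h_k}\cdot\nabla(\omega_{h_k}-v_{h_k})=2\gamma\varepsilon\int_\Omega\nabla v_{h_k}\cdot\nabla\omega_{h_k}-2\gamma\varepsilon\int_\Omega|\nabla v_{h_k}|^2$, the first piece converges by weak--strong convergence, while for the second only the weak lower semicontinuity $\liminf_k\int_\Omega|\nabla v_{h_k}|^2\ge\int_\Omega|\nabla v|^2$ is available. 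The key observation is that this term enters with a minus sign, so taking $\limsup_k$ of the inequality \eqref{eq:discrete_opt_cond} gives
\[
0\le \limsup_{k\to+\infty}J'_{\delta,\varepsilon,h_k}(v_{h_k})[\omega_{h_k}-v_{h_k}]\le J'_{\delta,\varepsilon}(v)[\omega-v].
\]
As $\omega\in\mathcal{K}$ is arbitrary, $v$ satisfies the continuous optimality condition \eqref{OC}--\eqref{VI}.

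Finally I would upgrade weak to strong convergence, for which it suffices to prove $\int_\Omega|\nabla v_{h_k}|^2\to\int_\Omega|\nabla v|^2$. The reverse inequality $\limsup_k\int_\Omega|\nabla v_{h_k}|^2\le\int_\Omega|\nabla v|^2$ is obtained by testing \eqref{eq:discrete_opt_cond} with a recovery sequence $\omega_{h_k}=\bar v_{h_k}\in\mathcal{K}_{h_k}$ satisfying $\bar v_{h_k}\to v$ strongly in $H^1(\Omega)$: then $\bar v_{h_k}-v_{h_k}\to 0$, so both energy terms and the zeroth-order term vanish in the limit (their prefactor tends to $0$), while $\int_\Omega\nabla v_{h_k}\cdot\nabla\bar v_{h_k}\to\int_\Omega|\nabla v|^2$ by weak--strong convergence; rearranging and letting $k\to+\infty$ gives the claimed bound. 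Combined with lower semicontinuity this yields $\int_\Omega|\nabla v_{h_k}|^2\to\int_\Omega|\nabla v|^2$, and in the Hilbert space $H^1(\Omega)$ weak convergence together with convergence of the norms forces $\nabla v_{h_k}\to\nabla v$ in $L^2(\Omega)$; together with $v_{h_k}\to v$ in $L^2(\Omega)$ this gives strong convergence in $H^1(\Omega)$, with a.e.\ convergence along the subsequence already selected. I expect the main obstacle to be precisely the gradient term in the variational inequality: the argument hinges on its favourable sign so that lower semicontinuity produces the correct one-sided estimate for the limit identity, and then on the recovery sequence $\bar v_{h_k}\to v$ to extract the matching reverse inequality needed for strong convergence.
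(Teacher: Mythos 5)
Your proposal is correct and follows essentially the same route as the paper: a uniform $H^1$ bound obtained by testing \eqref{eq:discrete_opt_cond} with $\omega_{h_k}=0$, extraction of a weakly convergent subsequence, passage to the limit in the variational inequality using the strong convergence of the states from Proposition \ref{prop_conv_uN e uD} together with the weak lower semicontinuity of the gradient term (whose favourable sign is exactly what the paper exploits, albeit implicitly), and finally strong $H^1$ convergence via a recovery sequence $\overline{v}_{h_k}\to v$ tested in the discrete inequality. The only cosmetic difference is that you organize the limit of the energy terms through a polarization identity where the paper adds and subtracts the corresponding cross terms.
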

\begin{proof}
We use the following notation: $v_k:=v_{h_k}$, $u^{\delta}_{N,k}:=u^{\delta}_{N,h_k}(v_k)$, $u^{\delta}_{D,k}:=u^{\delta}_{D,h_k}(v_k)$, and $w^{\delta}_{D,k}=u^{\delta}_{D,k}-u^f_k$. Using the discretized weak formulation of $u^{\delta}_{N,k}$, see \eqref{eq:u_h}, and $w^{\delta}_{D,k}$, see \eqref{eq:w_h}, we get that
\begin{equation*}
    \|u^{\delta}_{N,k}\|_{H^1(\Omega)}\leq c\|g_k\|_{L^2(\Sigma_N)},\qquad \textrm{and}\qquad \|u^{\delta}_{D,k}\|_{H^1(\Omega)}\leq c\|f_k\|_{H^{1/2}(\Sigma_N)},
\end{equation*}
where $c$ is a constant independent on $k$. Consequently, from the discretized optimality condition \eqref{eq:discrete_opt_cond}, choosing $\omega_{h_k}=0$, and recalling that $v_k$ is uniformly bounded since $v_k\leq 1$, for all $k$, we get that
\begin{equation*}
    2\gamma\varepsilon\int_{\Omega}|\nabla v_k|^2\, dx\leq c\left[ \frac{1}{2}\|\nabla u^{\delta}_{N,k}\|^2_{L^2(\Omega)}+\frac{1}{2}\|\nabla u^{\delta}_{D,k}\|^2_{L^2(\Omega)}\right] + c_0(\Omega,\gamma,\varepsilon)\leq c,
\end{equation*}
where $c$ is independent of $k$.
Therefore, $v_k$ is uniformly bounded in $H^1(\Omega)$, hence there exists a subsequence (still denoted by $v_k$) and $v\in\mathcal{K}$ such that $v_k\rightharpoonup v$ in $H^1(\Omega)$, $v_k\to v$ in $L^2(\Omega)$, and $v_k\to v$ a.e. in $\Omega$. Thanks to Proposition \ref{prop_conv_uN e uD}, we have that $u^{\delta}_{N,k}(v_k)\to u^{\delta}_N(v)$ in $H^1_{\Sigma_D}(\Omega)$ and $u^{\delta}_{D,k}(v_k)\to u^{\delta}_D(v)$ in $H^1(\Omega)$ with $u^{\delta}_{D,k}\lfloor_{\partial\Omega}\to u^{\delta}_{D}\lfloor_{\partial\Omega}$. We have to show that $v$ satisfies the variational inequality \eqref{OC}. Let us choose $\omega\in\mathcal{K}$, then there exists $\overline{\omega}_k\in\mathcal{K}_k$ such that $\overline{\omega}_k\to \omega$ in $H^1(\Omega)$ and a.e. in $\Omega$.\\  
Consider the variational inequality
\begin{equation}\label{eq:aux8}
\begin{aligned}
    J'_{\delta,\varepsilon,k}(v_k)[\overline{\omega}_k-v_k]&=\frac{1}{2}\int_{\Omega}(\overline{\omega}_k-v_k)(\mathbb{C}_1-\mathbb{C}_0)\widehat{\nabla}{u^{\delta}_{D,k}(v_k)}: \widehat{\nabla}{u^{\delta}_{D,k}(v_k)}\, dx\\
    &-\frac{1}{2}\int_{\Omega}(\overline{\omega}_k-v_k)(\mathbb{C}_1-\mathbb{C}_0)\widehat{\nabla}{u^{\delta}_{N,k}(v_k)}: \widehat{\nabla}{u^{\delta}_{N,k}(v_k)}\, dx\\
    &+2\gamma\varepsilon\int_{\Omega}\nabla v_k\cdot \nabla(\overline{\omega}_k-v_k)\, dx +\frac{\gamma}{\varepsilon}\int_{\Omega}(1-2v_k)(\overline{\omega}_k-v_k)\, dx\geq 0.
\end{aligned}
\end{equation}
For example, taking the integral related to $u^{\delta}_{N,k}(v_k)$ in the previous equation, we get
\begin{equation*}
\begin{aligned}
    &\int_{\Omega}(\overline{\omega}_k-v_k)(\mathbb{C}_1-\mathbb{C}_0)\widehat{\nabla}{u^{\delta}_{N,k}(v_k)}: \widehat{\nabla}{u^{\delta}_{N,k}(v_k)}\, dx\\
    &=\int_{\Omega}(\overline{\omega}_k-v_k)(\mathbb{C}_1-\mathbb{C}_0)\widehat{\nabla}{(u^{\delta}_{N,k}(v_k)-u^{\delta}_{N}(v))}: \widehat{\nabla}{(u^{\delta}_{N,k}(v_k)-u^{\delta}_{N}(v))}\, dx\\
   &+ \int_{\Omega}(\overline{\omega}_k-v_k)(\mathbb{C}_1-\mathbb{C}_0)\widehat{\nabla}{(u^{\delta}_{N,k}(v_k)-u^{\delta}_{N}(v))}: \widehat{\nabla}{u^{\delta}_{N}(v)}\, dx\\
   &+ \int_{\Omega}(\overline{\omega}_k-v_k)(\mathbb{C}_1-\mathbb{C}_0)\widehat{\nabla}{u^{\delta}_{N}(v)}: \widehat{\nabla}{u^{\delta}_{N}(v)}\, dx\\
   &=\int_{\Omega}(\overline{\omega}_k-v_k)(\mathbb{C}_1-\mathbb{C}_0)\widehat{\nabla}{(u^{\delta}_{N,k}(v_k)-u^{\delta}_{N}(v))}: \widehat{\nabla}{(u^{\delta}_{N,k}(v_k)-u^{\delta}_{N}(v))}\, dx\\
   &+ \int_{\Omega}(\overline{\omega}_k-v_k)(\mathbb{C}_1-\mathbb{C}_0)\widehat{\nabla}{(u^{\delta}_{N,k}(v_k)-u^{\delta}_{N}(v))}: \widehat{\nabla}{u^{\delta}_{N}(v)}\, dx\\
   &+ \int_{\Omega}((\overline{\omega}_k-\omega)-(v_k-v))(\mathbb{C}_1-\mathbb{C}_0)\widehat{\nabla}{u^{\delta}_{N}(v)}: \widehat{\nabla}{u^{\delta}_{N}(v)}\, dx\\
   &+ \int_{\Omega}(\omega-v)(\mathbb{C}_1-\mathbb{C}_0)\widehat{\nabla}{u^{\delta}_{N}(v)}: \widehat{\nabla}{u^{\delta}_{N}(v)}\, dx.
\end{aligned}
\end{equation*}
Note that, the first and the second integral on the right-hand side of the last equation tend to zero thanks to the $L^{\infty}(\Omega)$ estimates on $\mathbb{C}_1$, $\mathbb{C}_0$, $\overline{\omega}_k$, $v_k$, and the $H^1-$estimates for $u^{\delta}_{N,k}(v_k)-u^{\delta}_N(v)$. The third term tends to zero thanks to the dominated convergence theorem. \\
The same arguments above apply to the first integral on the right-hand side of \eqref{eq:aux8} related to $u^{\delta}_{D,k}(v_k)$. Inserting these results in \eqref{eq:aux8}, and using the fact that $v_k\rightharpoonup v$ in $H^1(\Omega)$, hence $\|\nabla v\|^2_{L^2(\Omega)}\leq \liminf\limits_{k\to +\infty}\|\nabla v_k\|^2_{L^2(\Omega)}$, and noticing that $\int_{\Omega}v_k\overline{\omega}_k\, dx \to \int_{\Omega}v\omega\, dx$, as $k\to+\infty$, we get  
\begin{equation*}
\begin{aligned}
    &\frac{1}{2}\int_{\Omega}({\omega}-v)(\mathbb{C}_1-\mathbb{C}_0)\widehat{\nabla}{u^{\delta}_{D}(v)}: \widehat{\nabla}{u^{\delta}_{D}(v)}\, dx\\
    &-\frac{1}{2}\int_{\Omega}({\omega}-v)(\mathbb{C}_1-\mathbb{C}_0)\widehat{\nabla}{u^{\delta}_{N}(v)}: \widehat{\nabla}{u^{\delta}_{N}(v)}\, dx\\
    &+2\gamma\varepsilon\int_{\Omega}\nabla v\cdot \nabla({\omega}-v)\, dx +\frac{\gamma}{\varepsilon}\int_{\Omega}(1-2v)({\omega}-v)\, dx\\
    &\geq \liminf\limits_{k\to+\infty}\Bigg\{\frac{1}{2}\int_{\Omega}(\overline{\omega}_k-v_k)(\mathbb{C}_1-\mathbb{C}_0)\widehat{\nabla}{u^{\delta}_{D,k}(v_k)}: \widehat{\nabla}{u^{\delta}_{D,k}(v_k)}\, dx\\
    &-\frac{1}{2}\int_{\Omega}(\overline{\omega}_k-v_k)(\mathbb{C}_1-\mathbb{C}_0)\widehat{\nabla}{u^{\delta}_{N,k}(v_k)}: \widehat{\nabla}{u^{\delta}_{N,k}(v_k)}\, dx\\
    &+2\gamma\varepsilon\int_{\Omega}\nabla v_k\cdot \nabla(\overline{\omega}_k-v_k)\, dx +\frac{\gamma}{\varepsilon}\int_{\Omega}(1-2v_k)(\overline{\omega}_k-v_k)\, dx\Bigg\}\geq 0.
\end{aligned}
\end{equation*}
To conclude the proof, we have to show that $v_k\to v$ in $H^1(\Omega)$. Taking $\overline{v}_k\in\mathcal{K}_{h_k}$ such that $v_k\to v$ in $H^1(\Omega)$ and substituting $\omega_k=\overline{v}_k$ in \eqref{eq:aux8}, we find
\begin{equation*}
\begin{aligned}
    2\gamma\varepsilon\int_{\Omega}|\nabla v_k|^2\, dx&\leq 2\gamma\varepsilon\int_{\Omega}\nabla v_k\cdot \nabla \overline{v}_k\, dx +\frac{\gamma}{\varepsilon}\int_{\Omega}(1-2v_k)(\overline{v}_k-v_k)\, dx\\
    &+\frac{1}{2}\int_{\Omega}(\overline{v}_k-v_k)(\mathbb{C}_1-\mathbb{C}_0)\widehat{\nabla}{u^{\delta}_{D,k}(v_k)}: \widehat{\nabla}{u^{\delta}_{D,k}(v_k)}\, dx\\
    &-\frac{1}{2}\int_{\Omega}(\overline{v}_k-v_k)(\mathbb{C}_1-\mathbb{C}_0)\widehat{\nabla}{u^{\delta}_{N,k}(v_k)}: \widehat{\nabla}{u^{\delta}_{N,k}(v_k)}\, dx
\end{aligned}
\end{equation*}
Analogously to the last part of the proof of Theorem \ref{th:5.2}, we get that on the right-hand side of the previous inequality is non-zero only the first term that converges to $\|\nabla v\|_{L^2(\Omega)}$, hence $\|\nabla v_k\|_{L^2(\Omega)}\to \|\nabla v\|_{L^2(\Omega)}$, that is the assertion. 
\end{proof}

\section{The algorithm and numerical examples}\label{sec:algorithm}
For the reconstruction procedure, we adopt the method utilized in \cite{DecEllSty16}, based on a parabolic inequality and the implementation of the Primal Dual Active Set (PDAS) method.\\
For every $\delta, \varepsilon>0$, consider $v$ solution to the following parabolic inequality
\begin{eqnarray}
    && \int_\Omega \partial_t v(\omega-v) + J'_{\delta,\varepsilon}(v)[\omega-v]\geq 0, \quad \forall \omega\in\mathcal{K},\ \  t\in(0+\infty),\nonumber\\
    && v(\cdot,0)=v_0\in\mathcal{K}.\nonumber
\end{eqnarray}
Let us denote with $v^n_h\approx v(\cdot,t^n)$ and with $v^0_h= v_0\in \mathcal{K}_h$. We consider the discretized version of the parabolic inequality, using a semi-implicit time discretization, that is: given $v^0_h\in\mathcal{K}_h$ find $v^{n+1}_h\in\mathcal{K}_h$ satisfying 
\begin{equation}\label{eq:discr_parab_ineq}
\begin{aligned}
     \frac{1}{\tau_n}\int_{\Omega}
(v_h^{n+1}-v_h^n)(\omega_h-v_h^{n+1}) &-\frac{1}{2}\int_{\Omega}(\omega_h-v^{n+1}_h)(\mathbb{C}_1-\mathbb{C}_0) \widehat{\nabla}{u^{\delta}_{N,h}(v^n_h)}:\widehat{\nabla}{u^{\delta}_{N,h}(v^n_h)}\, dx \\
&+\frac{1}{2}\int_{\Omega}(\omega_h-v^{n+1}_h)(\mathbb{C}_1-\mathbb{C}_0) \widehat{\nabla}{u^{\delta}_{D,h}(v^n_h)}:\widehat{\nabla}{u^{\delta}_{D,h}(v^n_h)}\, dx\\
&+2 \gamma \varepsilon \int_{\Omega}\nabla v^{n+1}_h \cdot \nabla (\omega_h-v^{n+1}_h)\, dx\\
&+ \frac{\gamma}{\varepsilon}\int_{\Omega}(1-2v^n_h)(\omega_h-v^{n+1}_h)\, dx\geq 0, \quad \forall \omega_h\in \mathcal{K}_h, n\geq 0,
\end{aligned}
\end{equation}
where $\tau_n$ is the time step, and $u^{\delta}_{N,h}(v^n_h)\in \mathcal{V}_{h,\Sigma_D}$, and $u^{\delta}_{D,h}(v^n_h)\in \mathcal{V}_{h}$ are the discrete solutions of \eqref{pb:neumann_incl} and \eqref{pb:dirichlet_incl}, respectively, for $v=v_h^n$.
\subsection{Convergence analysis}
We now show the following result related to a monotonicity property of the algorithm based on the discrete parabolic inequality.
\begin{lemma}\label{lemma:monotonicity}
For each $n \in \mathbb{N}$, there exists a constant $c_1>0$ such that, if $\tau_n\leq (1+c_1)^{-1}$, then 
\begin{equation}
    \|v_h^{n+1}-v_h^n\|^2_{L^2(\Omega)}+J_{\delta,\varepsilon,h}(v_h^{n+1})\leq J_{\delta,\varepsilon,h}(v_h^n), 
\end{equation}
where 
$c_1=c_1(\Omega,h,\delta, \xi_0,r_0,L_0,\|\mathbb{C}_0\|_{L^{\infty}(\Omega)},\|\mathbb{C}_1\|_{L^{\infty}(\Omega)},\|u^{\delta}_N\|_{W^{1,\infty}(\Omega)},\|u^{\delta}_D\|_{W^{1,\infty}(\Omega)})$.
\end{lemma}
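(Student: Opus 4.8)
\subsection*{Proof plan}

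The plan is to test the discrete parabolic inequality \eqref{eq:discr_parab_ineq} with the admissible choice $\omega_h=v_h^n$ and to match the resulting terms against the exact increment $J_{\delta,\varepsilon,h}(v_h^{n+1})-J_{\delta,\varepsilon,h}(v_h^n)$. Set $\Delta:=v_h^{n+1}-v_h^n\in\mathcal V_h$, so that $\omega_h-v_h^{n+1}=-\Delta$ and, by convexity of $\mathcal K_h$, $v_h^n+s\Delta=(1-s)v_h^n+s\,v_h^{n+1}\in\mathcal K_h$ for all $s\in[0,1]$. By (the discrete analogues of) Corollaries \ref{cor:der JN} and \ref{cor:der JD}, the two elastic integrals in \eqref{eq:discr_parab_ineq} are exactly $(J^{\delta}_{N,h})'(v_h^n)[-\Delta]$ and $(J^{\delta}_{D,h})'(v_h^n)[-\Delta]$; together with the (implicit) gradient term and the (explicit) double-well term they form a single functional $L[\vartheta]=(J^{\delta}_{N,h})'(v_h^n)[\vartheta]+(J^{\delta}_{D,h})'(v_h^n)[\vartheta]+2\gamma\varepsilon\int_\Omega\nabla v_h^{n+1}\cdot\nabla\vartheta+\tfrac{\gamma}{\varepsilon}\int_\Omega(1-2v_h^n)\vartheta$, which is linear in $\vartheta$. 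Since the time term equals $-\tau_n^{-1}\|\Delta\|_{L^2(\Omega)}^2$, the inequality \eqref{eq:discr_parab_ineq} reads $-\tau_n^{-1}\|\Delta\|_{L^2(\Omega)}^2+L[-\Delta]\ge 0$, whence, using $L[-\Delta]=-L[\Delta]$,
\[
L[\Delta]\le -\tau_n^{-1}\,\|\Delta\|_{L^2(\Omega)}^2 .
\]

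Next I would expand the energy increment term by term. For the Modica--Mortola part, the elementary identities $\|\nabla v_h^{n+1}\|^2-\|\nabla v_h^n\|^2=2\int_\Omega\nabla v_h^{n+1}\cdot\nabla\Delta-\|\nabla\Delta\|^2$ and $v_h^{n+1}(1-v_h^{n+1})-v_h^n(1-v_h^n)=(1-2v_h^n)\Delta-\Delta^2$ reproduce exactly the gradient and double-well contributions appearing in $L$, up to the two \emph{nonpositive} surplus terms $-\gamma\varepsilon\|\nabla\Delta\|_{L^2(\Omega)}^2$ and $-\tfrac{\gamma}{\varepsilon}\|\Delta\|_{L^2(\Omega)}^2$; here it is essential that the gradient term is discretized implicitly (base point $v_h^{n+1}$) and the double well explicitly (base point $v_h^n$), which is precisely what gives these surpluses the favourable sign. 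For the elastic part I write, for $\bullet\in\{N,D\}$, the first-order Taylor expansion $J^{\delta}_{\bullet,h}(v_h^{n+1})-J^{\delta}_{\bullet,h}(v_h^n)=(J^{\delta}_{\bullet,h})'(v_h^n)[\Delta]+E_\bullet$, with remainder $E_\bullet=\int_0^1\big[(J^{\delta}_{\bullet,h})'(v_h^n+s\Delta)-(J^{\delta}_{\bullet,h})'(v_h^n)\big][\Delta]\,ds$. Collecting the first-order terms back into $L[\Delta]$ yields the exact identity
\[
J_{\delta,\varepsilon,h}(v_h^{n+1})-J_{\delta,\varepsilon,h}(v_h^n)=L[\Delta]+E_N+E_D-\gamma\varepsilon\|\nabla\Delta\|_{L^2(\Omega)}^2-\tfrac{\gamma}{\varepsilon}\|\Delta\|_{L^2(\Omega)}^2 ,
\]
the constant $\overline J_{ND,h}$ cancelling in the difference.

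The crux is the estimate of the remainders $E_N,E_D$. Using the explicit form of $(J^{\delta}_{\bullet,h})'$ and the identity $\widehat\nabla u_s:\widehat\nabla u_s-\widehat\nabla u_0:\widehat\nabla u_0=\widehat\nabla(u_s-u_0):\widehat\nabla(u_s+u_0)$, with $u_s:=u^{\delta}_{\bullet,h}(v_h^n+s\Delta)$, I would bound $|E_\bullet|$ by keeping $\Delta$ and the state \emph{increment} in $L^2$ while pulling the gradient of the states out in $L^\infty$. This is the sharpened version of the continuity estimates \eqref{estdiff} and \eqref{estdiff_dirichlet}: estimating $\|(\mathbb{C}_1-\mathbb{C}_0)\vartheta\,\widehat\nabla u^{\delta}_{\bullet,h}\|_{L^2(\Omega)}\le c\,\|u^{\delta}_{\bullet,h}\|_{W^{1,\infty}(\Omega)}\|\vartheta\|_{L^2(\Omega)}$ gives $\|u_s-u_0\|_{H^1(\Omega)}\le c\,s\,\|u^{\delta}_{\bullet,h}\|_{W^{1,\infty}(\Omega)}\|\Delta\|_{L^2(\Omega)}$, and hence, after integrating in $s$, $|E_\bullet|\le c\,\|u^{\delta}_{\bullet,h}\|_{W^{1,\infty}(\Omega)}^2\,\|\Delta\|_{L^2(\Omega)}^2$. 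Summing, $E_N+E_D\le c_1\|\Delta\|_{L^2(\Omega)}^2$, with $c_1$ depending exactly on the quantities listed in the statement; the dependence on $h$ enters because on the finite element space $\mathcal V_h$ the $W^{1,\infty}$ norms of the discrete states are controlled only through $h$-dependent (inverse) estimates. I expect this remainder bound to be the main obstacle: everything else is either the defining variational inequality or elementary algebra, whereas controlling $E_N+E_D$ by $\|\Delta\|_{L^2(\Omega)}^2$ uniformly in $n$ is what forces the sharpened continuity estimates and the finite-dimensional structure.

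Finally I would combine the pieces. Discarding the two nonpositive surplus terms and using $L[\Delta]\le-\tau_n^{-1}\|\Delta\|_{L^2(\Omega)}^2$ together with the remainder bound gives
$J_{\delta,\varepsilon,h}(v_h^{n+1})-J_{\delta,\varepsilon,h}(v_h^n)\le\big(c_1-\tau_n^{-1}\big)\|\Delta\|_{L^2(\Omega)}^2$. Imposing $\tau_n\le(1+c_1)^{-1}$, that is $\tau_n^{-1}\ge 1+c_1$, makes the bracket $\le -1$, so that $\|\Delta\|_{L^2(\Omega)}^2+J_{\delta,\varepsilon,h}(v_h^{n+1})\le J_{\delta,\varepsilon,h}(v_h^n)$, which is the assertion; this also explains why the step-size restriction takes precisely the form $\tau_n\le(1+c_1)^{-1}$.
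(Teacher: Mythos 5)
Your proposal is correct and follows essentially the same route as the paper: test \eqref{eq:discr_parab_ineq} with $\omega_h=v_h^n$, expand the Modica--Mortola part exactly to extract the nonpositive surplus terms, write the elastic energy increment as first derivative plus a quadratic remainder, bound that remainder by $c_1\|v_h^{n+1}-v_h^n\|_{L^2(\Omega)}^2$, and absorb it with the time term under $\tau_n\le(1+c_1)^{-1}$. The only (cosmetic) divergence is in the remainder: you use the integral-form Taylor remainder of $(J^{\delta}_{\bullet,h})'$, whereas the paper derives an exact identity from the discrete weak formulations, expressing the remainder as the coercive quadratic form $\tfrac12\int_\Omega\mathbb{C}^{\delta}(v_h^{n+1})\widehat\nabla\bigl(u^{\delta}_{\bullet,h}(v_h^{n+1})-u^{\delta}_{\bullet,h}(v_h^{n})\bigr):\widehat\nabla\bigl(u^{\delta}_{\bullet,h}(v_h^{n+1})-u^{\delta}_{\bullet,h}(v_h^{n})\bigr)\,dx$ and then invoking \eqref{estdiff}, \eqref{eq:aux15} — and your explicit tracking of where the $W^{1,\infty}$ norms of the states and the $h$-dependence (inverse estimates on $\mathcal V_h$) enter actually makes precise a point the paper leaves implicit.
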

\begin{proof}
Let us choose $\omega_h=v^n_h$ in \eqref{eq:discr_parab_ineq}. Then,
\begin{equation*}
\begin{aligned}
    0\leq -\frac{1}{\tau_n}\|v^{n+1}_h-v^n_h\|^2_{L^2(\Omega)}&-\frac{1}{2}\int_{\Omega}(v^n_h-v^{n+1}_h)(\mathbb{C}_1-\mathbb{C}_0)\widehat{\nabla}{u^{\delta}_{N,h}(v^n_h)}:\widehat{\nabla}{u^{\delta}_{N,h}(v^n_h)}\, dx \\
    &+\frac{1}{2}\int_{\Omega}(v^n_h-v^{n+1}_h)(\mathbb{C}_1-\mathbb{C}_0)\widehat{\nabla}{u^{\delta}_{D,h}(v^n_h)}:\widehat{\nabla}{u^{\delta}_{D,h}(v^n_h)}\, dx\\
    &+2\gamma\varepsilon\int_{\Omega}\nabla v^{n+1}_h:\nabla(v^n_h-v^{n+1}_h)\, dx\\
    &+\frac{\gamma}{\varepsilon}\int_{\Omega}(1-2v^n_h)(v^n_h-v^{n+1}_h)\, dx.
\end{aligned}    
\end{equation*}
After lengthy but simple calculations, we get
\begin{equation}\label{eq:aux10}
\begin{aligned}
    &\frac{1}{\tau_n}\|v^{n+1}_h-v^n_h\|^2_{L^2(\Omega)}+\gamma\varepsilon\|\nabla(v^n_h-v^{n+1}_h)\|^2_{L^2(\Omega)}+\frac{\gamma}{\varepsilon}\|v^n_h-v^{n+1}_h\|^2_{L^2(\Omega)}\\
    &+\gamma\int_{\Omega}\left[\varepsilon|\nabla v^{n+1}_h|^2-\frac{1}{\varepsilon}v^{n+1}_h(1-v^{n+1}_h)  \right]\, dx - \gamma\int_{\Omega}\left[\varepsilon|\nabla v^n_h|^2+\frac{1}{\varepsilon}v^n_h(1-v^n_h)\right]\, dx\\
    &\leq -\frac{1}{2}\int_{\Omega}(v^n_h-v^{n+1}_h)(\mathbb{C}_1-\mathbb{C}_0) \widehat{\nabla}{u^{\delta}_{N,h}(v^n_h)}:\widehat{\nabla}{u^{\delta}_{N,h}(v^n_h)}\, dx\\
    &+\frac{1}{2}\int_{\Omega}(v^n_h-v^{n+1}_h)(\mathbb{C}_1-\mathbb{C}_0) \widehat{\nabla}{u^{\delta}_{D,h}(v^n_h)}:\widehat{\nabla}{u^{\delta}_{D,h}(v^n_h)}\, dx:=I_N+I_D.
\end{aligned}
\end{equation}
We now work on the two terms $I_N$ and $I_D$. Note that
\begin{equation*}
\begin{aligned}
    I_N&=\frac{1}{2}\int_{\Omega}(v^n_h-v^{n+1}_h)(\mathbb{C}_1-\mathbb{C}_0) \widehat{\nabla}{u^{\delta}_{N,h}(v^n_h)}:\widehat{\nabla}{u^{\delta}_{N,h}(v^n_h)}\, dx\\
    &-\int_{\Omega}(v^n_h-v^{n+1}_h)(\mathbb{C}_1-\mathbb{C}_0) \widehat{\nabla}{u^{\delta}_{N,h}(v^n_h)}:\widehat{\nabla}{u^{\delta}_{N,h}(v^n_h)}\, dx,
\end{aligned}
\end{equation*}
hence, using the discretized version of \eqref{eq:aux4}, we get
\begin{equation*}
\begin{aligned}
    I_N&=\frac{1}{2}\int_{\Omega}\left(\mathbb{C}^{\delta}(v^n_h)-\mathbb{C}^{\delta}(v^{n+1}_h)\right)\widehat{\nabla}{u^{\delta}_{N,h}(v^n_h)}: \widehat{\nabla}{u^{\delta}_{N,h}(v^n_h)}\, dx\\ &-\int_{\Omega}\mathbb{C}^{\delta}(v^n_h)\widehat{\nabla}{\widetilde{u}^{\delta}_{N,h}(v^n_h)}:\widehat{\nabla}{u^{\delta}_{N,h}(v^n_h)}\, dx
\end{aligned}
\end{equation*}
Using a discretized version of \eqref{eq:aux9} in the previous equation, we get that
\begin{equation}\label{eq:aux13}
\begin{aligned}
    I_N&=-\frac{1}{2}\int_{\Omega}\left(\mathbb{C}^{\delta}(v^{n+1}_h)-\mathbb{C}^{\delta}(v^n_h)\right)\widehat{\nabla}{u^{\delta}_{N,h}(v^n_h)}: \widehat{\nabla}{u^{\delta}_{N,h}(v^n_h)}\, dx\\ &-\int_{\Omega}\mathbb{C}^{\delta}(v^{n+1}_h)\widehat{\nabla}{(u^{\delta}_{N,h}(v^{n+1}_h)-u^{\delta}_{N,h}(v^n_h))}:\widehat{\nabla}{u^{\delta}_{N,h}(v^n_h)}\, dx\\
    &=-J^{\delta}_N(v^{n+1}_h)+J^{\delta}_N(v^{n}_h)\\
    &+\frac{1}{2}\int_{\Omega}\mathbb{C}^{\delta}(v^{n+1}_h)\widehat{\nabla}{(u^{\delta}_{N,h}(v^{n+1}_h)-u^{\delta}_{N,h}(v^n_h))}:\widehat{\nabla}{(u^{\delta}_{N,h}(v^{n+1}_h)-u^{\delta}_{N,h}(v^n_h))}\, dx
\end{aligned}
\end{equation}
Completely analogous calculations can be made for $I_D$, using the discretized versions of \eqref{eq:aux5} and \eqref{eq:aux11}. Then, by means of \eqref{eq:aux13} (and the analogous expression for $I_D$) in \eqref{eq:aux10}, we get
\begin{equation}\label{eq:aux12}
\begin{aligned}
    &\frac{1}{\tau_n}\|v^{n+1}_h-v^n_h\|^2_{L^2(\Omega)}+\gamma\varepsilon\|\nabla(v^n_h-v^{n+1}_h)\|^2_{L^2(\Omega)}+\frac{\gamma}{\varepsilon}\|v^n_h-v^{n+1}_h\|^2_{L^2(\Omega)}\\
    &+\gamma\int_{\Omega}\left[\varepsilon|\nabla v^{n+1}_h|^2-\frac{1}{\varepsilon}v^{n+1}_h(1-v^{n+1}_h)  \right]\, dx - \gamma\int_{\Omega}\left[\varepsilon|\nabla v^n_h|^2+\frac{1}{\varepsilon}v^n_h(1-v^n_h)\right]\, dx\\
    &\leq -J^{\delta}_N(v^{n+1}_h)+J^{\delta}_N(v^{n}_h)-J^{\delta}_D(v^{n+1}_h)+J^{\delta}_D(v^{n}_h)\\
    &+\frac{1}{2}\int_{\Omega}\mathbb{C}^{\delta}(v^{n+1}_h)\widehat{\nabla}{(u^{\delta}_{N,h}(v^{n+1}_h)-u^{\delta}_{N,h}(v^n_h))}:\widehat{\nabla}{(u^{\delta}_{N,h}(v^{n+1}_h)-u^{\delta}_{N,h}(v^n_h))}\, dx\\
    &+\frac{1}{2}\int_{\Omega}\mathbb{C}^{\delta}(v^{n+1}_h)\widehat{\nabla}{(u^{\delta}_{D,h}(v^{n+1}_h)-u^{\delta}_{D,h}(v^n_h))}:\widehat{\nabla}{(u^{\delta}_{D,h}(v^{n+1}_h)-u^{\delta}_{D,h}(v^n_h))}\, dx.
\end{aligned}
\end{equation}
Finally, adding and subtracting $\overline{J}_{ND}$, which is defined in \eqref{def:JND}, in \eqref{eq:aux12} we get 
\begin{equation*}
\begin{aligned}
    &\frac{1}{\tau_n}\|v^{n+1}_h-v^n_h\|^2_{L^2(\Omega)}+\gamma\varepsilon\|\nabla(v^n_h-v^{n+1}_h)\|^2_{L^2(\Omega)}+\frac{\gamma}{\varepsilon}\|v^n_h-v^{n+1}_h\|^2_{L^2(\Omega)}+ J_{\delta,\varepsilon}(v^{n+1}_h)\\
    &\leq J_{\delta,\varepsilon}(v^{n}_h)+\frac{1}{2}\int_{\Omega}\mathbb{C}^{\delta}(v^{n+1}_h)\widehat{\nabla}{(u^{\delta}_{N,h}(v^{n+1}_h)-u^{\delta}_{N,h}(v^n_h))}:\widehat{\nabla}{(u^{\delta}_{N,h}(v^{n+1}_h)-u^{\delta}_{N,h}(v^n_h))}\, dx\\
    &+\frac{1}{2}\int_{\Omega}\mathbb{C}^{\delta}(v^{n+1}_h)\widehat{\nabla}{(u^{\delta}_{D,h}(v^{n+1}_h)-u^{\delta}_{D,h}(v^n_h))}:\widehat{\nabla}{(u^{\delta}_{D,h}(v^{n+1}_h)-u^{\delta}_{D,h}(v^n_h))}\, dx.
\end{aligned}
\end{equation*}
Estimating the last two terms on the right-hand side of the previous expression, using the $H^1-$norm of the differences $u^{\delta}_{N,h}(v^{n+1}_h)-u^{\delta}_{N,h}(v^n_h)$ and $u^{\delta}_{D,h}(v^{n+1}_h)-u^{\delta}_{D,h}(v^n_h)$ in terms of $\|v^{n+1}_h-v^n_h\|_{L^{\infty}(\Omega)}$, see \eqref{estdiff} and \eqref{eq:aux15}, we have that there exists a constant
\begin{equation}\label{eq:cost c1}
c_1=c_1(\Omega,h,\delta, \xi_0,r_0,L_0,\|\mathbb{C}_0\|_{L^{\infty}(\Omega)},\|\mathbb{C}_1\|_{L^{\infty}(\Omega)},\|u^{\delta}_N\|_{H^1(\Omega)},\|u^{\delta}_D\|_{H^1(\Omega)}),
\end{equation}
such that
\begin{equation*}
\begin{aligned}
    &\left(\frac{1}{\tau_n}-c_1\right)\|v^{n+1}_h-v^n_h\|^2_{L^2(\Omega)}+\gamma\varepsilon\|\nabla(v^n_h-v^{n+1}_h)\|^2_{L^2(\Omega)}+\frac{\gamma}{\varepsilon}\|v^n_h-v^{n+1}_h\|^2_{L^2(\Omega)}+ J_{\delta,\varepsilon}(v^{n+1}_h)\\
    &\leq J_{\delta,\varepsilon}(v^{n}_h),
\end{aligned}
\end{equation*}
that is
\begin{equation*}
    \left(\frac{1}{\tau_n}-c_1\right)\|v^{n+1}_h-v^n_h\|^2_{L^2(\Omega)}+ J_{\delta,\varepsilon}(v^{n+1}_h)\leq J_{\delta,\varepsilon}(v^{n}_h).
\end{equation*}
Therefore, the assertion of the theorem follows by choosing $\tau_n\leq \frac{1}{1+c_1}$.
\end{proof}
Finally, we state a convergence result for the algorithm.
\begin{theorem}
Let $v_h^0\in\mathcal{K}_h$ be an initial guess. Under the assumptions of Lemma \ref{lemma:monotonicity}, there exists a sequence of timesteps $\tau_n$ such that $0< \beta \leq \tau_n \leq (1+c_1)^{-1} $, $\forall n>0$, where $\beta$ depends on the data and possibly on $h$. The corresponding sequence $v_h^n$ generated by \eqref{eq:discr_parab_ineq} has a convergent subsequence (still denoted by $v_h^n$) in $W^{1,\infty}$ such that 
$$v_h^n\to v_h, \qquad \textrm{as}\ n\to+\infty, $$
where $v_h\in\mathcal{K}_h$ and satisfies the discrete optimality condition 
$$J'_{\delta,\varepsilon,h}(v_h)[\omega_h-v_h]\geq 0,\quad \forall \omega_h\in \mathcal{K}_h .$$
\end{theorem}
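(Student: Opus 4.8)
The plan is to turn the energy-descent estimate of Lemma~\ref{lemma:monotonicity} into convergence of the increments, to extract a limit by finite-dimensionality, and then to pass to the limit in the discrete parabolic inequality \eqref{eq:discr_parab_ineq} to recover the stationarity condition. First I would secure a timestep range valid at \emph{every} step. For any $v_h\in\mathcal{K}_h$ one has $0\le v_h\le 1$, so $\mathbb{C}^{\delta}(v_h)=(1-(1-\delta)v_h)\mathbb{C}_0$ is uniformly coercive with constant bounded below by $\delta\xi_0$; testing \eqref{eq:u_h} and \eqref{eq:w_h} with the solutions themselves then yields the a priori bounds $\|u^{\delta}_{N,h}(v_h)\|_{H^1(\Omega)}\le c\|g_h\|_{L^2(\Sigma_N)}$ and $\|u^{\delta}_{D,h}(v_h)\|_{H^1(\Omega)}\le c\|f_h\|_{H^{1/2}(\Sigma_N)}$, \emph{uniformly in} $v_h$. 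Since $\mathcal{V}_h$ is finite-dimensional, these translate into uniform $W^{1,\infty}(\Omega)$-bounds for the discrete states, so the constant $c_1$ of \eqref{eq:cost c1} can be replaced by a single $\bar c_1$ independent of the iterate. Choosing $\tau_n\equiv\beta:=(1+\bar c_1)^{-1}$ (so that $\beta\le\tau_n\le(1+c_1)^{-1}$ holds at each step) makes Lemma~\ref{lemma:monotonicity} applicable for every $n$, whence
\begin{equation*}
\|v_h^{n+1}-v_h^n\|_{L^2(\Omega)}^2+J_{\delta,\varepsilon,h}(v_h^{n+1})\le J_{\delta,\varepsilon,h}(v_h^{n}),\qquad n\ge 0 .
\end{equation*}

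Next I would use this estimate quantitatively. The functional $J_{\delta,\varepsilon,h}$ in \eqref{eq:minimum_probl_discretized} is bounded below by the constant $\overline{J}_{ND,h}$, since $J^{\delta}_{N,h},J^{\delta}_{D,h}\ge0$ and the Modica--Mortola integrand is nonnegative on $\mathcal{K}_h$. Summing the descent inequality over $n$ and using this lower bound gives the telescoping estimate $\sum_{n\ge0}\|v_h^{n+1}-v_h^n\|_{L^2(\Omega)}^2\le J_{\delta,\varepsilon,h}(v_h^0)-\overline{J}_{ND,h}<\infty$, so that $\|v_h^{n+1}-v_h^n\|_{L^2(\Omega)}\to0$. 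The same monotonicity gives $J_{\delta,\varepsilon,h}(v_h^n)\le J_{\delta,\varepsilon,h}(v_h^0)=:E_0$, and through the gradient part of the Modica--Mortola term together with $0\le v_h^n\le1$ this bounds $\{v_h^n\}$ uniformly in $H^1(\Omega)$; by equivalence of norms on $\mathcal{V}_h$ the sequence is bounded in $W^{1,\infty}(\Omega)$. Bolzano--Weierstrass then yields a subsequence, still denoted $v_h^n$, with $v_h^n\to v_h$ in $W^{1,\infty}(\Omega)$, and $v_h\in\mathcal{K}_h$ because $\mathcal{K}_h$ is closed. As the increments vanish in $L^2$, the shifted iterates $v_h^{n+1}$ converge to the same $v_h$.

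Finally I would pass to the limit in \eqref{eq:discr_parab_ineq} for a fixed test function $\omega_h\in\mathcal{K}_h$. The discrete solution maps $v_h\mapsto u^{\delta}_{N,h}(v_h)$ and $v_h\mapsto u^{\delta}_{D,h}(v_h)$ are continuous (arguing as in Proposition~\ref{continuity}, or directly, since for fixed $h$ they solve uniformly coercive linear systems with coefficients depending affinely on $v_h$ through $\mathbb{C}^{\delta}(v_h)$), hence $u^{\delta}_{N,h}(v_h^n)\to u^{\delta}_{N,h}(v_h)$ and $u^{\delta}_{D,h}(v_h^n)\to u^{\delta}_{D,h}(v_h)$ in $H^1(\Omega)$, and therefore in $W^{1,\infty}(\Omega)$. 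The parabolic term is killed by the lower bound on the timesteps,
\begin{equation*}
\Big|\frac{1}{\tau_n}\int_{\Omega}(v_h^{n+1}-v_h^n)(\omega_h-v_h^{n+1})\Big|\le\frac{1}{\beta}\,\|v_h^{n+1}-v_h^n\|_{L^2(\Omega)}\,\|\omega_h-v_h^{n+1}\|_{L^2(\Omega)}\longrightarrow0 ,
\end{equation*}
while all the remaining integrands converge (the gradient term by the strong convergence $\nabla v_h^{n+1}\to\nabla v_h$, the others by convergence of the states and of $v_h^n,v_h^{n+1}\to v_h$). The limiting inequality is precisely $J'_{\delta,\varepsilon,h}(v_h)[\omega_h-v_h]\ge0$; since $\omega_h\in\mathcal{K}_h$ was arbitrary, $v_h$ satisfies the discrete optimality condition, which is the assertion.

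The step I expect to be delicate is not the compactness or the passage to the limit, both routine once the framework is in place, but the \emph{uniform} control of the monotonicity constant: a priori $c_1$ in \eqref{eq:cost c1} depends on the iterate through the states, and one must ensure it does not degenerate along the iteration, so that a single admissible range $[\beta,(1+\bar c_1)^{-1}]$ with $\beta>0$ can be fixed from the start. The resolution is that the uniform coercivity of $\mathbb{C}^{\delta}(v_h)$ on $\mathcal{K}_h$ makes the discrete states bounded independently of $v_h$, and finite-dimensionality upgrades this to a uniform $W^{1,\infty}$-bound; this is what simultaneously guarantees $\beta>0$ (needed to discard the parabolic term) and the monotone decay of the energy along the whole sequence.
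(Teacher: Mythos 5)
Your proposal is correct and follows essentially the same route as the paper: energy descent from Lemma \ref{lemma:monotonicity} gives summability of the increments and a uniform bound on $J_{\delta,\varepsilon,h}(v_h^n)$, finite-dimensionality of $\mathcal{V}_h$ upgrades the resulting $H^1$ bounds on $v_h^n$ and on the discrete states to $W^{1,\infty}$ bounds (which is exactly how the paper guarantees a uniform $c_1$ and hence a positive lower bound $\beta$ for the timesteps), and the limit passage in \eqref{eq:discr_parab_ineq} discards the parabolic term via $\tau_n\geq\beta$ together with the vanishing of the increments. The only differences are cosmetic: you fix the uniform bound on $c_1$ up front rather than a posteriori, and you make explicit the lower bound $\overline{J}_{ND,h}$ on the functional before telescoping, both of which slightly tighten the paper's presentation without changing the argument.
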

\begin{proof}
Let us take a collection of timesteps bounded by $(1+c_1)^{-1}$, for all $n>0$. By means of Lemma \ref{lemma:monotonicity}, we have 
\begin{eqnarray}
&&\sum_{n=0}^{+\infty} \|v_h^n-v_h^{n+1}\|^2_{L^2(\Omega)}
\leq J_{\delta,\varepsilon,h}(v_h^0),\label{aux:conv:1}\\
&&\sup_{n\in\mathbb{N}} J_{\delta,\varepsilon,h}(v_h^n)\leq J_{\delta,\varepsilon,h}(v_h^0).\label{aux:conv:2}
\end{eqnarray}
Therefore, we deduce that $v_h^n$ is bounded in $W^{1,\infty}$, since in finite-dimensional spaces all the norms are equivalent, and 
\begin{equation}\label{aux:conv:3}
    \lim_{n\to +\infty} \|v_h^n-v_h^{n+1}\|^2_{L^2(\Omega)} =0.
\end{equation} 
Using the weak formulations of the forward problems for $u^{\delta}_{N,h}(v^n_h)$, and \\ $w^{\delta}_{D,h}(v^n_h)=u^{\delta}_{D,h}(v^n_h)-u^f_h$, we deduce, applying analogous arguments described in the previous sections, that $u^{\delta}_{N,h}(v^n_h)$ and $u^{\delta}_{D,h}(v^n_h)$ are bounded in $H^1(\Omega)$, hence in $W^{1,\infty}(\Omega)$, where the constants appearing in the estimates do not depend on $n$. 
This implies that, recalling \eqref{eq:cost c1}, there exists a constant $C>0$, independent on $n$, such that $c_1\leq C$, and equivalently there exists a positive constant $\beta>0$, independent of $n$, such that $\beta\leq (1+c_1)^{-1}$. Moreover, from the convergence in $W^{1,\infty}(\Omega)$, we find that there exists a subsequence of 
$(v_h^n,u^{\delta}_{N,h}(v_h^n),u^{\delta}_{D,h}(v_h^n))$ (still denoted the same) such that, as $n\to+\infty$,
$$ (v_h^n,u^{\delta}_{N,h}(v_h^n),u^{\delta}_{D,h}(v_h^n)) \to (v_h,u^{\delta}_{N,h}(v_h),u^{\delta}_{D,h}(v_h))\quad\text{in~}W^{1,\infty}(\Omega),$$
hence, 
$$u^{\delta}_{N,h}(v_h^n)\to u^{\delta}_{N,h}(v_h),\quad \text{a.e.~in~}\Omega, \qquad u^{\delta}_{D,h}(v_h^n)\to u^{\delta}_{D,h}(v_h),\quad \text{a.e.~in~}\Omega. $$
Therefore, $u^{\delta}_{N,h}(v_h)$ and $u^{\delta}_{D,h}(v_h)$ are the solutions of the discrete forward problems.
To conclude, from \eqref{eq:discr_parab_ineq} and the fact that $\tau_n \geq \beta$, we get 
\begin{eqnarray}
&&-\frac{1}{2}\int_{\Omega}(\omega_h-v^{n+1}_h)(\mathbb{C}_1-\mathbb{C}_0) \widehat{\nabla}{u^{\delta}_{N,h}(v^n_h)}:\widehat{\nabla}{u^{\delta}_{N,h}(v^n_h)}\, dx\nonumber\\
&&+\frac{1}{2}\int_{\Omega}(\omega_h-v^{n+1}_h)(\mathbb{C}_1-\mathbb{C}_0) \widehat{\nabla}{u^{\delta}_{D,h}(v^n_h)}:\widehat{\nabla}{u^{\delta}_{D,h}(v^n_h)}\, dx\nonumber\\
&&+  2 \gamma \varepsilon \int_{\Omega}\widehat\nabla v^{n+1}_h \cdot \widehat\nabla (\omega_h-v^{n+1}_h)+ \frac{\gamma}{\varepsilon}\int_{\Omega}(1-2v^n_h)(\omega_h-v^{n+1}_h)\nonumber \\ 
&&\geq -\frac{1}{\beta} \|v_h^{n+1}-v_h^n\|_{L^2(\Omega)}
\| \omega_h -v_h^{n+1}\|_{L^2(\Omega)}.\nonumber
\end{eqnarray}
Finally, using \eqref{aux:conv:3} it follows that $v_h$ satisfies the discrete optimality condition \eqref{eq:discrete_opt_cond}.
\end{proof}

\subsection{Numerical Experiments}\label{sec:numerical examples}
This section is devoted to present numerical reconstructions of cavities from an implementation of the so-called Primal Dual Active Set (PDAS) method to the variational inequality \eqref{eq:discr_parab_ineq}. PDAS has been introduced in
\cite{HintItoKun03} and it has been shown its effectiveness and robustness in the reconstruction procedures, for examples in \cite{ABCHDRR20,CRBHRA19,DecEllSty16,GLNS21,HePen19}. 
In the inverse problem context, it has been applied for the reconstruction of conductivity inclusions in \cite{DecEllSty16} and in \cite{BerRatVer18} in the case of a linear and of a semilinear elliptic equation, respectively. Recently, it has been applied for detection of elastic cavities and inclusions in \cite{AspBerCavRocVer22}. The reconstruction procedure in all previous papers is based on the use of a boundary quadratic misfit functional, not on a Kohn-Vogelius functional.\\
The aim of this section is to show that choosing $\delta$ and $\varepsilon$ sufficiently small, we are able to reconstruct elastic cavities (inclusions) of different shapes. 
Precisely, we adopt the following reconstruction algorithm. 
\begin{algorithm}[H]
\caption{Discrete Parabolic Obstacle Problem}
\label{al:algorithm}
\begin{algorithmic}
\State{Given $\textrm{tol}>0$, set $n = 0$ and $v_h^0 = v_0$, the initial guess\; }
\While{$\|{v^n_h-v^{n-1}_h}\|>\textrm{tol}$}
\State{solve the forward problem \eqref{pb:neumann_incl} with $v = v_h^n$;}
\State{solve the forward problem \eqref{pb:dirichlet_incl} with $v = v_h^n$;}
\State{determine $v^{n+1}_{{h}}$ solving \eqref{eq:discr_parab_ineq} via PDAS algorithm ;}
\State{update $n = n+1$;}
\EndWhile
\end{algorithmic}
\end{algorithm}
We focus the attention only on numerical experiments in $d=2$, performing the reconstruction procedure in a square, i.e, $\Omega=[-1,1]^2$, using a triangulation $\mathcal{T}_h$ of $\Omega$, and synthetic data generated by the Finite Element (FE) method implemented in FreeFEM++ (\cite{Hecht12}). Here, we provide some information on the implementation of the algorithm and the resolution of the foward problems.\\
\underline{\textit{Tessellation of $\Omega$:}} Given $g\in L^2(\Omega)$, the boundary measurements $f$, appearing in \eqref{eq:dirichlet_problem}, are obtained by solving the Neumann problem \eqref{eq:neumann_problem}. In order to not commit an inverse crime, which can happen solving the direct and inverse problems using the same tessellation $\mathcal{T}_h$, we use a more refined triangulation $\mathcal{T}^{ref}_h$ than $\mathcal{T}_h$ for solving the forward problem \eqref{eq:neumann_problem}. Note that $\mathcal{T}^{ref}_h$ is a tessellation of the square with cavities (holes), see Figure \ref{fig:mesh_cavity}, while $\mathcal{T}_h$ is a full tessellation of $\Omega$, see Figure \ref{fig:mesh_inverse}.
\begin{figure}[h!]
    \centering
    \begin{subfigure}{0.3\textwidth}
    \centering
    \includegraphics[width=\textwidth]{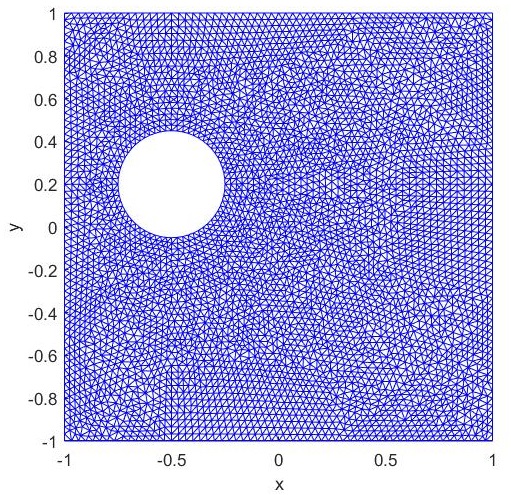}
    \caption{Mesh $\mathcal{T}^{ref}_h$ for forward problem.}
    \label{fig:mesh_cavity}
    \end{subfigure}
    \hfill
    \begin{subfigure}{0.3\textwidth}
    \centering
    \includegraphics[width=\textwidth]{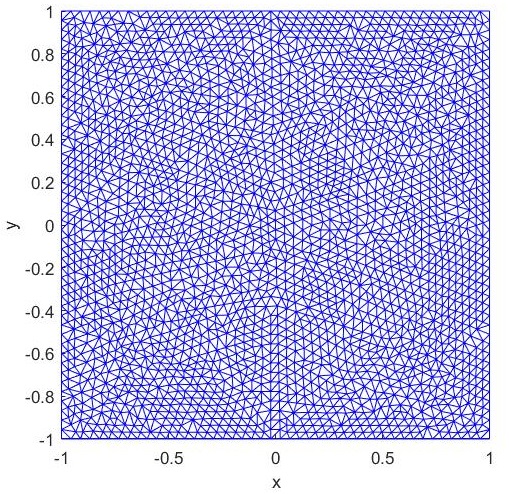}
    \caption{Mesh $\mathcal{T}_h$ for inverse problem.}
    \label{fig:mesh_inverse}
    \end{subfigure}
    \hfill
    \begin{subfigure}{0.3\textwidth}
    \centering
    \includegraphics[width=\textwidth]{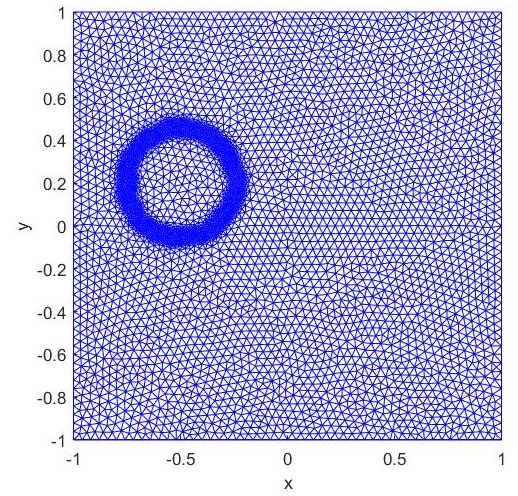}
    \caption{Refinement of the mesh around the inclusion.}
    \label{fig:refiniment}
    \end{subfigure}
\caption{Example of the meshes and the refinement.}
\label{fig:meshes}    
\end{figure}
Finally, once extracting the values of the solution of the forward problem on the boundary of the domain $\Omega$, computed by the mesh $\mathcal{T}^{ref}_h$, we interpolate these values on the mesh $\mathcal{T}_h$. In this way there is no chance to commit an inverse crime.\\
\underline{\textit{Refinement of the mesh.}} The triangular mesh $\mathcal{T}_h$ is adaptively refined during the reconstruction procedure with respect to the gradient of the phase-field variable $v_h$, see Figure \ref{fig:refiniment}. Specifically, we fix an a-priori bound and an a-priori number of iterations, which we denote by $tol_{ref}$ (with $tol_{ref}>tol$) and $n_{ref}$, respectively, such that if $\|{v^n_h-v^{n-1}_h}\|> tol_{ref}$ there is no refinement of the mesh. If $\|{v^n_h-v^{n-1}_h}\|\leq tol_{ref}$, then the refinement can occur if the remainder of $n/n_{ref}$ is equal to zero. In numerical examples, we always choose $tol_{ref}=7\times 10^{-5}$, while $n_{ref}$ is almost always $2000$ or $3000$, depending on the numerical experiment.\\  
\underline{\textit{Boundary data:}} We assume the knowledge of two different boundary measurements, that is of two pairs $(g_1,f_1)$ and $(g_2,f_2)$, where $g_1$ and $g_2$ are the given Neumann boundary conditions in \eqref{eq:neumann_problem}, while $f_1$ and $f_2$ are the measured displacement on the boundary. It is a common assumption the use of $N_m$ different boundary measurements $(g_i,f_i)$, for $i=1,\ldots,N_m$, in order to improve the numerical results. In this way, the functional to be minimized is the following one which is a  slight modification of the original optimization problem \eqref{minrel}, 
\begin{equation}\label{eq:min_prob_more_meas}
\begin{aligned}
\min_{v \in \mathcal{K}} J^{sum}_{\delta,\varepsilon}(v),& \,\,\, \\ &\hspace{-2.1cm} J^{sum}_{\delta,\varepsilon}(v) := {\frac{1}{N_m}}\sum_{i=1}^{N_m} J^{\delta}_{KV,i}(v)
+ \gamma \!\int_{\Omega}\Big( \varepsilon|\nabla v|^2 + \frac{1}{\varepsilon}v(1-v)\Big),
\end{aligned}
\end{equation}
where $J^{\delta}_{KV,i}$ is the Kohn-Vogelius functional, introduced in \eqref{def:JN and JD delta}, related to the data $(g_i,f_i)$, for $i=1,\cdots,N_m$.
The necessary optimality condition related to  \eqref{eq:min_prob_more_meas} can be equivalently obtained reasoning similarly as we did to derive \eqref{VI}.\\
In the numerical experiments, we choose $g_1=(x,y)$ and $g_2=(-y,-x)$. \\
\underline{\textit{Noise in the data:}} {Since $f_i$, for $i=1,\ldots,N_m$ are measured data, it is natural to assume that the available data are noisy perturbations of them. Therefore,} we add a uniform noise to the boundary data. Specifically, given noiseless boundary measurements $f_i\in H^{1/2}(\Sigma_N)$, for $i=1,\cdots,N_m$, the noisy data $f^{noise}_i$ is obtained by
\begin{equation*}
    f^{noise}_i=f_i+\eta\|f_i\|_{L^2(\Sigma_N)},
\end{equation*}
where $\eta$ is a random real number, $\eta\in (-a,a)$ with $a>0$, where $a$ is chosen according to the noise level. We use the following relative error to determine the noise level
\begin{equation*}
    \frac{\sqrt{\sum_{i=1}^{N_m}\|f^{noise}_i-f_i\|_{L^2(\Sigma_N)}}}{\sqrt{\sum_{i=1}^{N_m}\|f_i\|_{L^2(\Sigma_N)}}}.
\end{equation*}
\underline{\textit{Initial guess:}} In all the experiments, we assume that $v_0\equiv 0$, which corresponds to not having a-priori information on the cavity to be reconstructed. \\
Finally, we report here a table containing some of the values and ranges of the parameters utilized in most numerical tests. Possible changes in these values are highlighted in the caption of the figures related to each specific experiment.  
\begin{table}[h!]
{\begin{center}
 \begin{tabular}{|c | c | c | c | c|} 
 \hline
 \centering
 tol & $\gamma$ & $\tau_n$ & $\varepsilon$ & $\delta$  \\ [0.5ex] 
 \hline\hline
 $10^{-5}$ & $[10^{-2}, 10^{-1}]$ & $[10^{-4},10^{-3}]$ & $\frac{1}{16\pi}$  & $10^{-2}$ \\ [1ex]
 \hline
 \end{tabular}
 \captionof{table}{Values of some parameters utilized in Algorithm \ref{al:algorithm}.}\label{tab}
\end{center}
}
\end{table}

\subsubsection*{Numerical results.} In Figure \ref{fig:Test1}, we start showing the numerical experiment related to the identification of a circular inclusion in presence of noiseless measurements. One can observe the reconstruction at different time steps. 
\begin{figure}[h!]
    \centering
    \begin{subfigure}{0.3\textwidth}
    \centering
    \includegraphics[width=\textwidth]{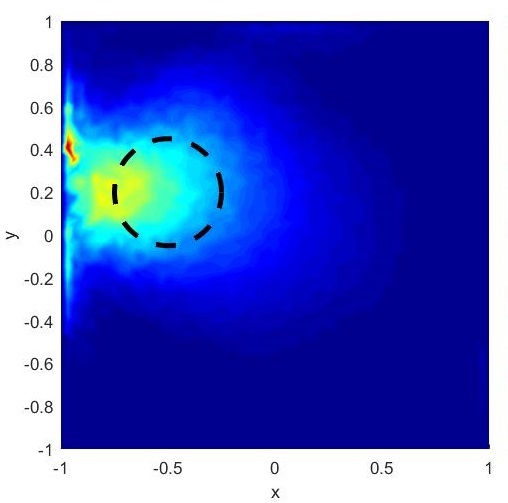}
    \caption{At $n=10$}
    \label{fig:test1_10}
    \end{subfigure}
    \hfill
    \centering
    \begin{subfigure}{0.3\textwidth}
    \centering
    \includegraphics[width=\textwidth]{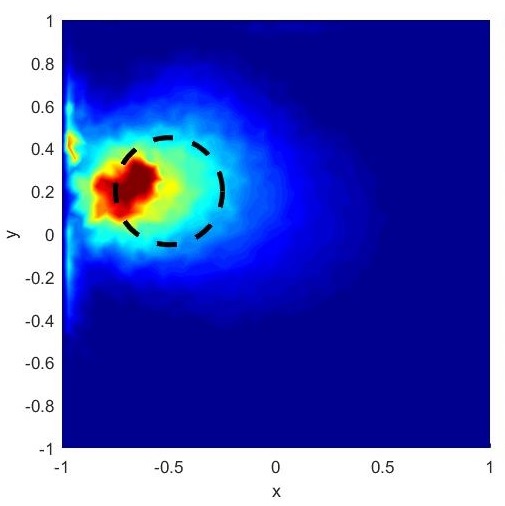}
    \caption{At $n=100$}
    \label{fig:test1_100}
    \end{subfigure}
    \hfill
    \begin{subfigure}{0.3\textwidth}
    \centering
    \includegraphics[width=\textwidth]{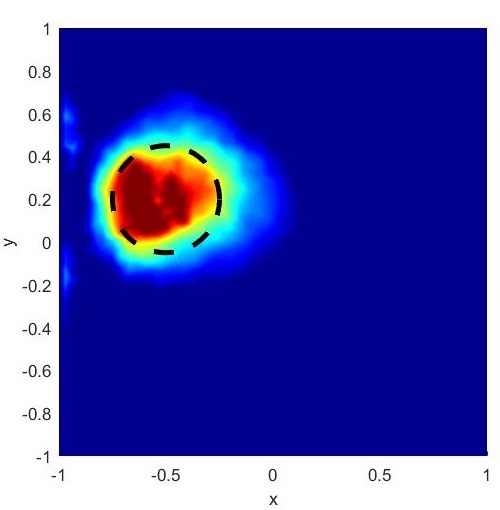}
    \caption{At $n=500$}
    \label{fig:test1_500}
    \end{subfigure}
    \hfill\\
    \begin{subfigure}{0.3\textwidth}
    \centering
    \includegraphics[width=\textwidth]{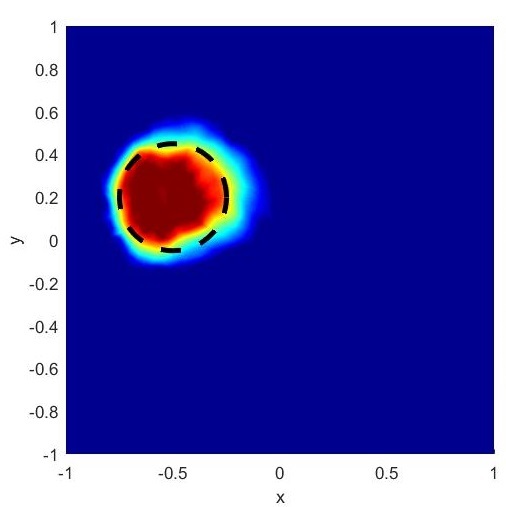}
    \caption{At $n=1000$}
    \label{fig:test1_1000}
    \end{subfigure}
     \hfill
    \begin{subfigure}{0.3\textwidth}
    \centering
    \includegraphics[width=\textwidth]{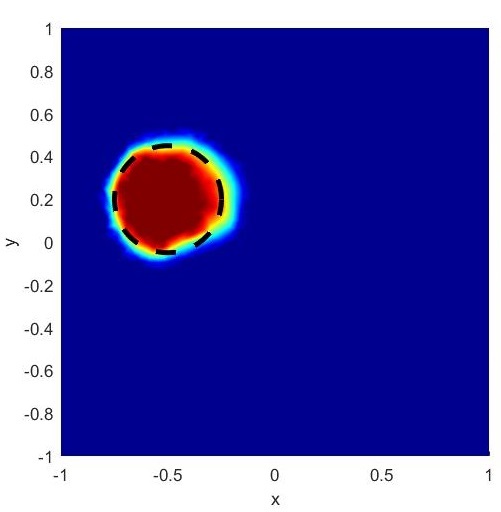}
    \caption{At $n=1500$}
    \label{fig:test1_1500}
    \end{subfigure}
    \hfill
    \begin{subfigure}{0.35\textwidth}
    \centering
    \includegraphics[width=\textwidth]{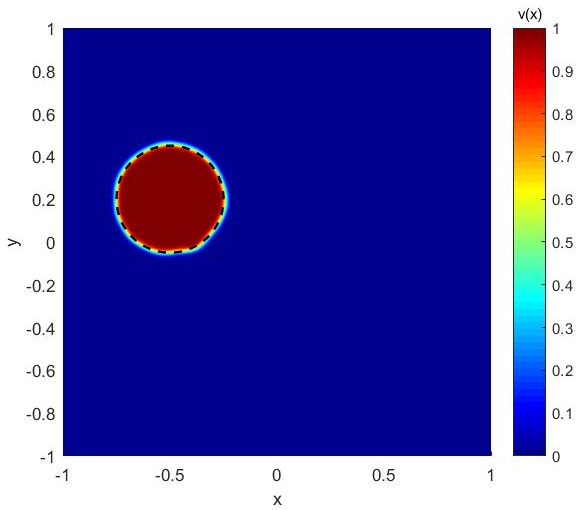}
    \caption{At $n=3033$ (final step)}
    \label{fig:test1_final}
    \end{subfigure}
\caption{Test 1. Reconstruction of a circular cavity without noise in the measurements. We provide the reconstruction at different time steps $n$. Dotted line represents the target cavity. In this test we use $n_{ref}=800$, $\tau_n=2\times 10^{-3}$, and $(\mu,\lambda)=(0.5,1)$.}
\label{fig:Test1}    
\end{figure}
In Figure \ref{fig:Test2}, we provide the same numerical example of Test 1 (Figure \ref{fig:Test1}) but considering noisy measurements, with different levels of noise.
\begin{figure}[h!]
    \centering
    \begin{subfigure}{0.3\textwidth}
    \centering
    \includegraphics[width=\textwidth]{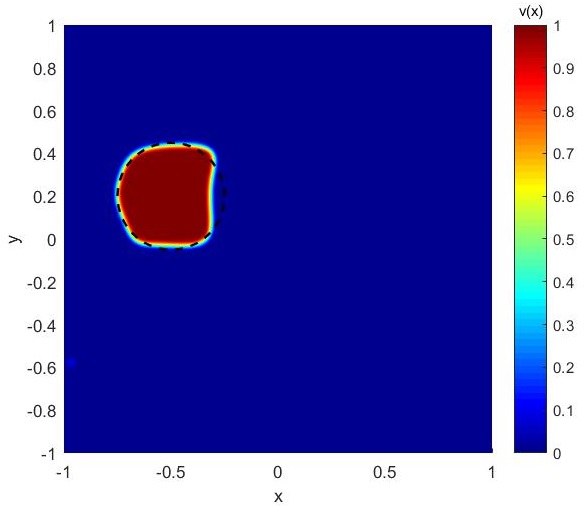}
    \caption{Noise $2\%$. Final iteration at $n=12928$. $n_{ref}=2000$, $\gamma=10^{-1}$, and $\tau_n=4\times 10^{-4}$. $(\mu,\lambda)=(0.5,1)$.}
    \label{fig:test2a}
    \end{subfigure}
    \hfill
    \centering
    \begin{subfigure}{0.3\textwidth}
    \centering
    \includegraphics[width=\textwidth]{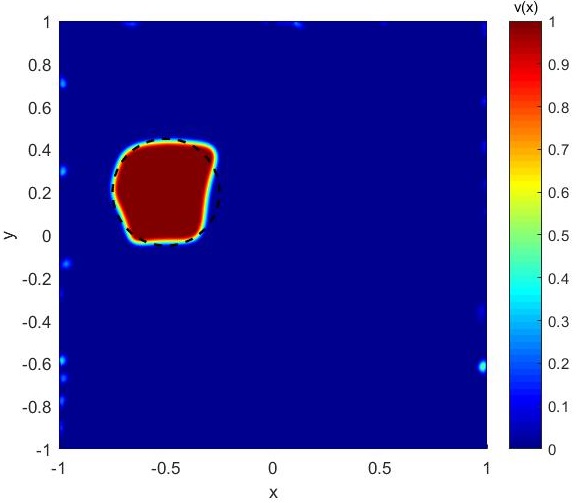}
    \caption{Noise $5\%$. Final iteration at $n=13695$. $n_{ref}=2000$, $\gamma=10^{-1}$, and $\tau_n=4\times 10^{-4}$. $(\mu,\lambda)=(0.5,1)$.}
    \label{fig:test2b}
    \end{subfigure}
    \hfill
    \begin{subfigure}{0.3\textwidth}
    \centering
    \includegraphics[width=\textwidth]{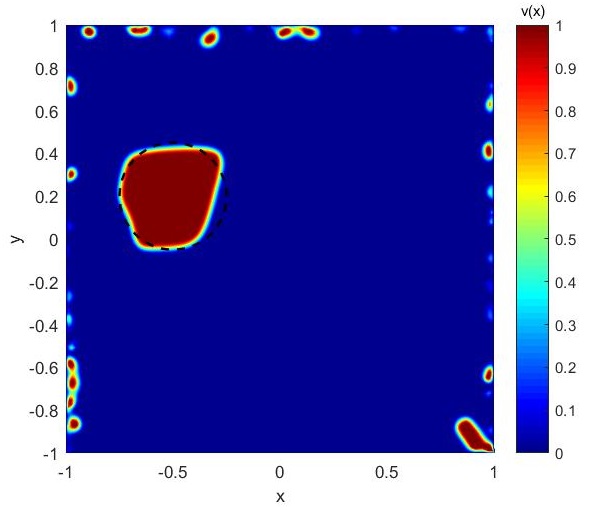}
    \caption{Noise $6,5\%$. Final iteration at $n=20978$. $n_{ref}=2000$, $\gamma=10^{-1}$, and $\tau_n=4\times 10^{-4}$. $(\mu,\lambda)=(0.5,1)$.}
    \label{fig:test2c}
    \end{subfigure}
\caption{Test 2. Reconstruction of a circular cavity with noise in the measurements. Dotted line represents the target cavity.}
\label{fig:Test2}    
\end{figure}
In Figure \ref{fig:Test3} we show the reconstruction of a circular inclusion varying the values of the Lam\'e parameters. The level of noise in this case is fixed at $5\%$. 
\begin{figure}[h!]
    \centering
    \begin{subfigure}{0.45\textwidth}
    \centering
    \includegraphics[width=\textwidth]{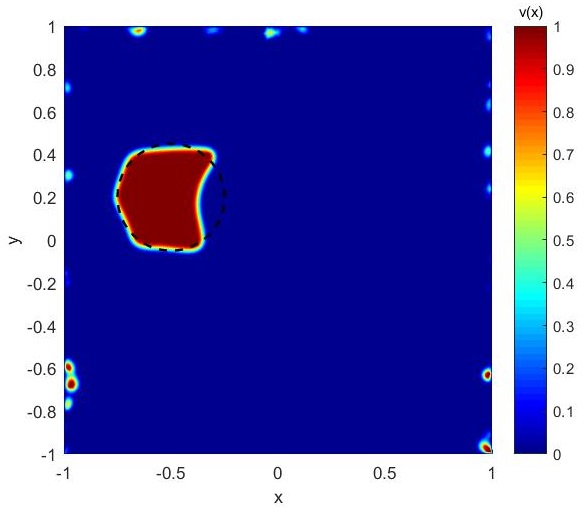}
    \caption{Noise $5\%$. Final iteration at $n=17723$. $n_{ref}=2000$, $\gamma=5\times 10^{-2}$, and $\tau_n=4\times 10^{-4}$. $(\mu,\lambda)=(1,0.2)$.}
    \label{fig:test3a}
    \end{subfigure}
    \hfill
    \centering
    \begin{subfigure}{0.45\textwidth}
    \centering
    \includegraphics[width=\textwidth]{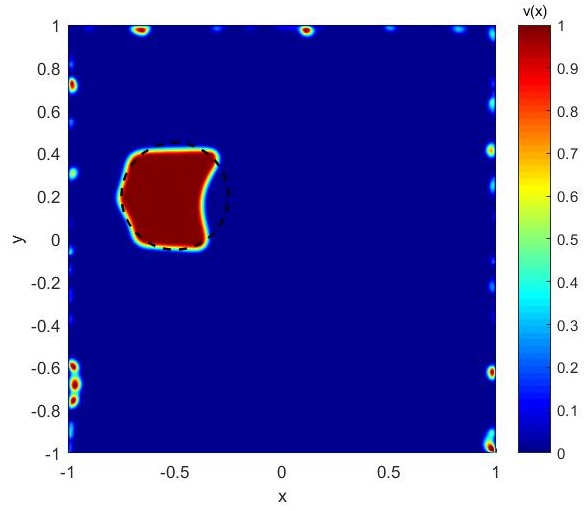}
    \caption{Noise $5\%$. Final iteration at $n=21967$. $n_{ref}=2000$, $\gamma=5\times 10^{-2}$, and $\tau_n=4\times 10^{-4}$. $(\mu,\lambda)=(1,-0.2)$.}
    \label{fig:test3b}
    \end{subfigure}
    \hfill
    \begin{subfigure}{0.45\textwidth}
    \centering
    \includegraphics[width=\textwidth]{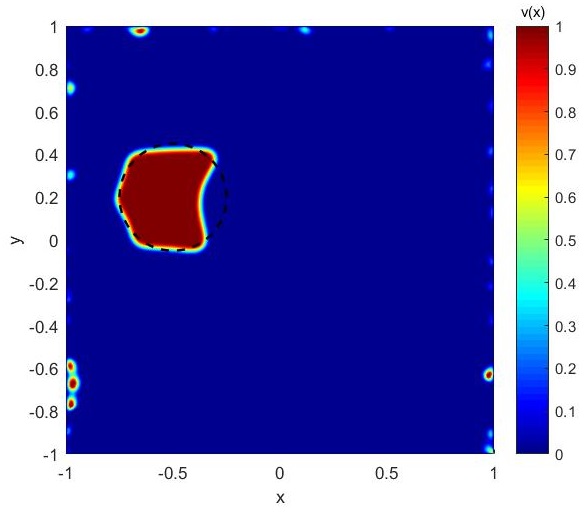}
    \caption{Noise $5\%$. Final iteration at $n=20978$. $n_{ref}=2000$, $\gamma=5\times 10^{-2}$, and $\tau_n=4\times 10^{-4}$. $(\mu,\lambda)=(0.5,0)$.}
    \label{fig:test3c}
    \end{subfigure}
    \hfill
    \begin{subfigure}{0.45\textwidth}
    \centering
    \includegraphics[width=\textwidth]{images/test3/test3c.jpg}
    \caption{Noise $5\%$. Final iteration at $n=11147$. $n_{ref}=3000$, $\gamma=10^{-1}$, and $\tau_n=10^{-4}$. $(\mu,\lambda)=(100,100)$.}
    \label{fig:test3d}
    \end{subfigure}
\caption{Test 3. Reconstruction of a circular cavity with noise in the measurements and for different values of the Lam\'e parameters.}
\label{fig:Test3}    
\end{figure}
In Figure \ref{fig:Test4}, we present the results related to the reconstruction of a rectangular cavity for different values of the noise level and $\gamma$. The Lam\'e parameters are fixed.
\begin{figure}[h!]
    \centering
    \begin{subfigure}{0.3\textwidth}
    \centering
    \includegraphics[width=\textwidth]{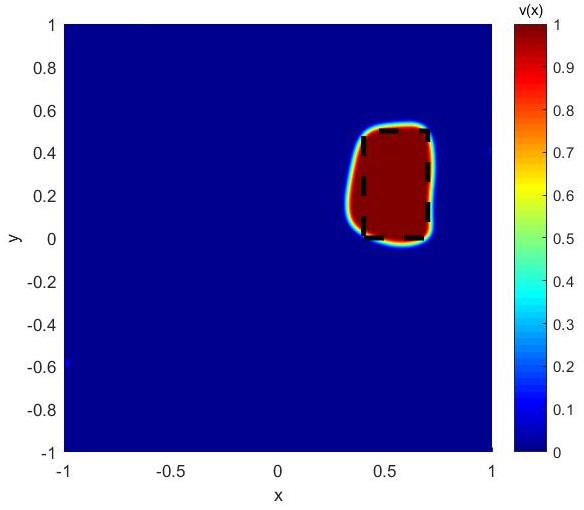}
    \caption{Noise $2\%$. Final iteration at $n=7198$. $n_{ref}=2000$, $\gamma=5\times 10^{-2}$, and $\tau_n=5\times 10^{-4}$. $(\mu,\lambda)=(0.5,1)$.}
    \label{fig:test4a}
    \end{subfigure}
    \hfill
    \centering
    \begin{subfigure}{0.3\textwidth}
    \centering
    \includegraphics[width=\textwidth]{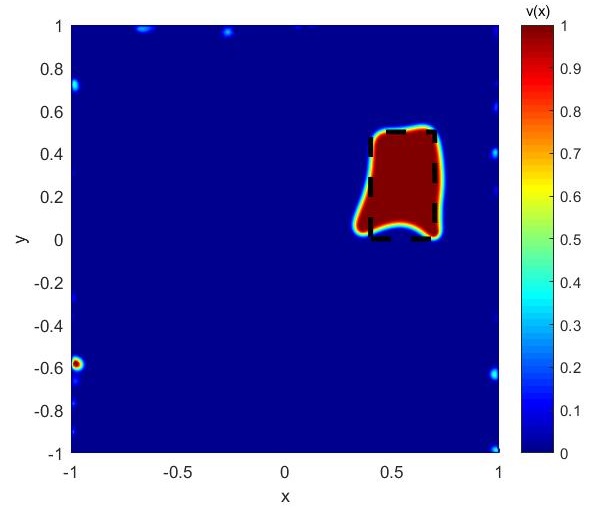}
    \caption{Noise $5\%$. Final iteration at $n=19221$. $n_{ref}=2000$, $\gamma=10^{-1}$, and $\tau_n=10^{-4}$. $(\mu,\lambda)=(0.5,1)$.}
    \label{fig:test4b}
    \end{subfigure}
    \hfill
    \begin{subfigure}{0.3\textwidth}
    \centering
    \includegraphics[width=\textwidth]{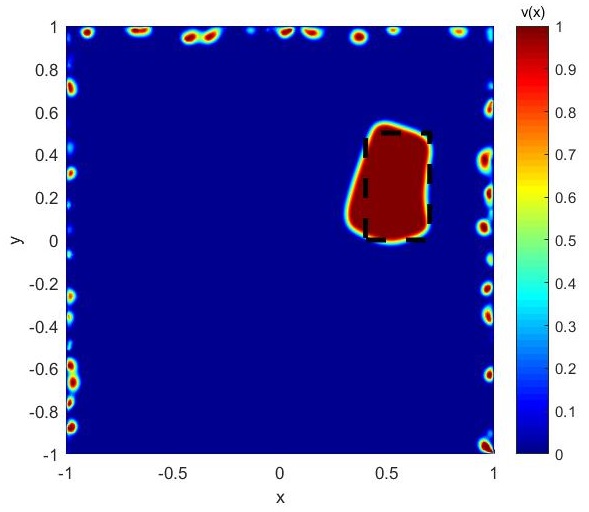}
    \caption{Noise $5\%$. Final iteration at $n=23101$. $n_{ref}=2000$, $\gamma=5\times 10^{-2}$, and $\tau_n=5\times 10^{-4}$. $(\mu,\lambda)=(0.5,1)$.}
    \label{fig:test4c}
    \end{subfigure}
\caption{Test 4. Reconstruction of a rectangular cavity with noise in the measurements. Dotted line represents the target cavity.}
\label{fig:Test4}    
\end{figure}
We also propose the case where the cavities to be reconstructed are two, see Figure \ref{fig:Test5}. We provide two examples where for the rectangular cavity we consider two different positions in $\Omega$.  
\begin{figure}[h!]
    \centering
    \begin{subfigure}{0.45\textwidth}
    \centering
    \includegraphics[width=\textwidth]{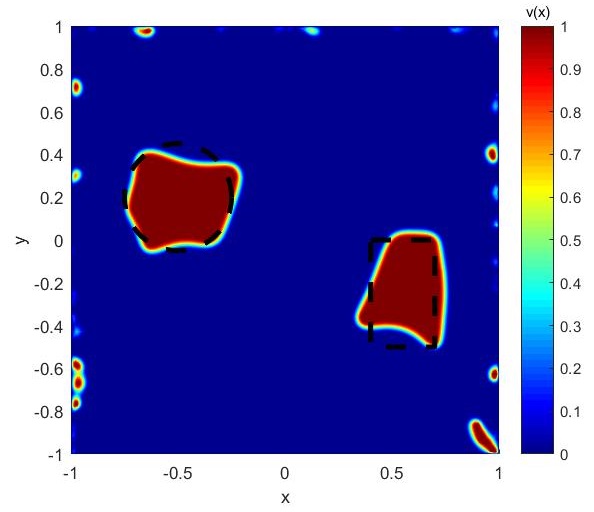}
    \caption{Noise $5\%$. Final iteration at $n=35318$. $n_{ref}=3000$, $\gamma=10^{-1}$, and $\tau_n=10^{-4}$. $(\mu,\lambda)=(0.5,1)$.}
    \label{fig:test5a}
    \end{subfigure}
    \hfill
    \centering
    \begin{subfigure}{0.45\textwidth}
    \centering
    \includegraphics[width=\textwidth]{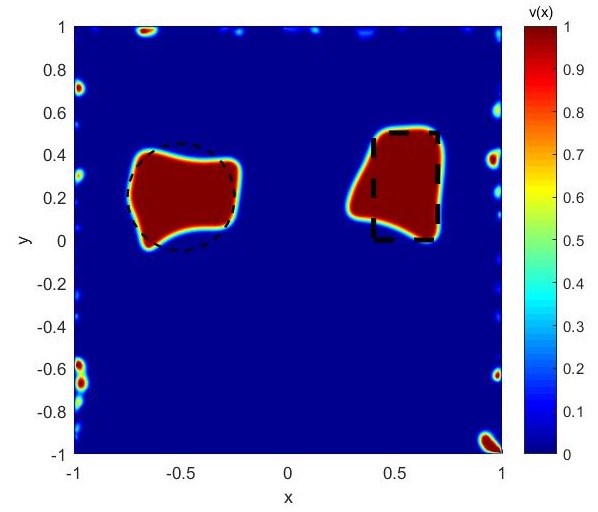}
    \caption{Noise $5\%$. Final iteration at $n=20716$. $n_{ref}=3000$, $\gamma=10^{-1}$, and $\tau_n=10^{-4}$. $(\mu,\lambda)=(0.5,1)$.}
    \label{fig:test5b}
    \end{subfigure}
\caption{Test 5. Reconstruction of two cavities with noise in the measurements. Dotted lines represent the target cavities.}
\label{fig:Test5}    
\end{figure}
In Figure \ref{fig:Test6}, we provide the numerical results of an elliptical cavity. We consider the case of noiseless measurements, the case of noise level at $2\%$ and $5\%$. Note that when the noise level is $5\%$ we change the position and the size of the cavity.
\begin{figure}[h!]
    \centering
    \begin{subfigure}{0.45\textwidth}
    \centering
    \includegraphics[width=\textwidth]{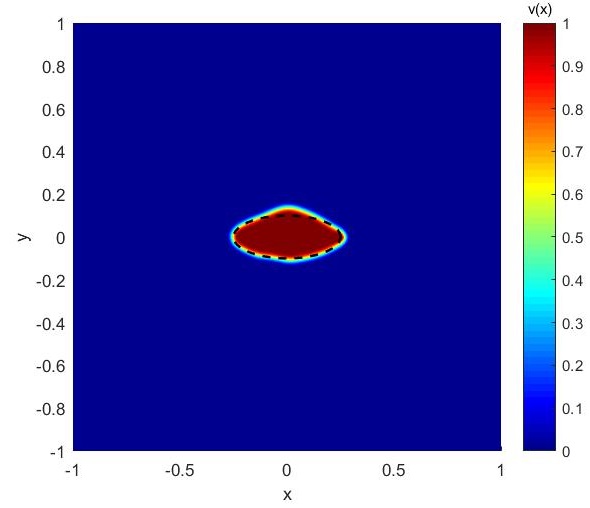}
    \caption{No noise. Final iteration at $n=5718$. $n_{ref}=2000$, $\gamma=10^{-2}$, and $\tau_n=10^{-3}$. $(\mu,\lambda)=(0.5,1)$.}
    \label{fig:test6a}
    \end{subfigure}
    \hfill
    \begin{subfigure}{0.45\textwidth}
    \centering
    \includegraphics[width=\textwidth]{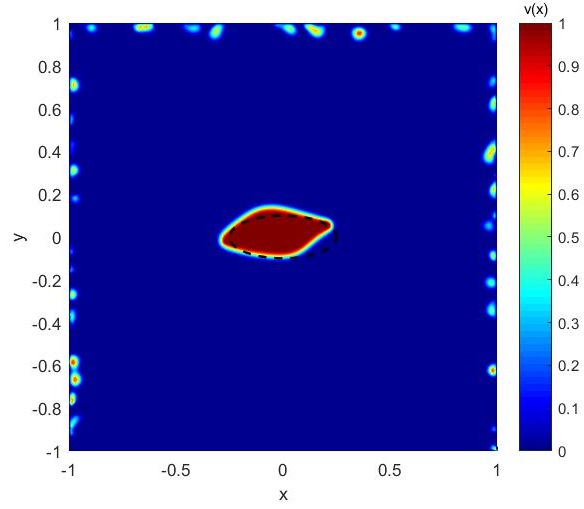}
    \caption{Noise $2\%$. Final iteration at $n=7342$. $n_{ref}=2000$, $\gamma=10^{-2}$, and $\tau_n=10^{-3}$. $(\mu,\lambda)=(0.5,1)$.}
    \label{fig:test6b}
    \end{subfigure}
    \hfill
    \begin{subfigure}{0.45\textwidth}
    \centering
    \includegraphics[width=\textwidth]{images/test6/test6b.jpg}
    \caption{Noise $2\%$. Final iteration at $n=7977$. $n_{ref}=2000$, $\gamma=5\times 10^{-2}$, and $\tau_n=10^{-3}$. $(\mu,\lambda)=(0.5,1)$.}
    \label{fig:test6c}
    \end{subfigure}
    \hfill
    \begin{subfigure}{0.45\textwidth}
    \centering
    \includegraphics[width=\textwidth]{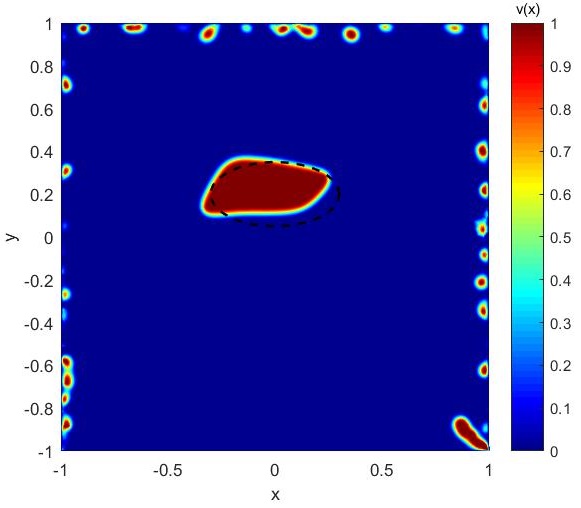}
    \caption{Noise $5\%$. Final iteration at $n=13971$. $n_{ref}=2000$, $\gamma=5\times 10^{-2}$, and $\tau_n=10^{-3}$. $(\mu,\lambda)=(0.5,1)$.}
    \label{fig:test6d}
    \end{subfigure}
\caption{Test 6. Reconstruction of an elliptical cavity with noise in the measurements.}
\label{fig:Test6}    
\end{figure}
In Figure \ref{fig:Test7} we show an example of reconstruction of a non-convex domain. We observe that the cavity is located but its non-convexity is not reconstructed. The convexification of the cavity is due to the presence of the Modica-Mortola relaxation that approximates the perimeter of the cavity. 
\begin{figure}[h!]
    \centering
    \begin{subfigure}{0.35\textwidth}
    \centering
    \includegraphics[width=\textwidth]{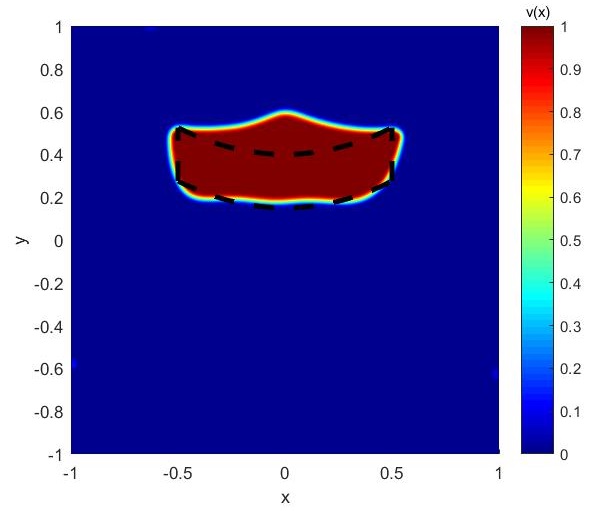}
    \caption{Noise $2\%$. Final iteration at $n=12748$. $n_{ref}=3000$, $\gamma=10^{-1}$, and $\tau_n=10^{-4}$. $(\mu,\lambda)=(0.5,1)$.}
    \label{fig:test7a}
    \end{subfigure}
    \hfill
    \begin{subfigure}{0.35\textwidth}
    \centering
    \includegraphics[width=\textwidth]{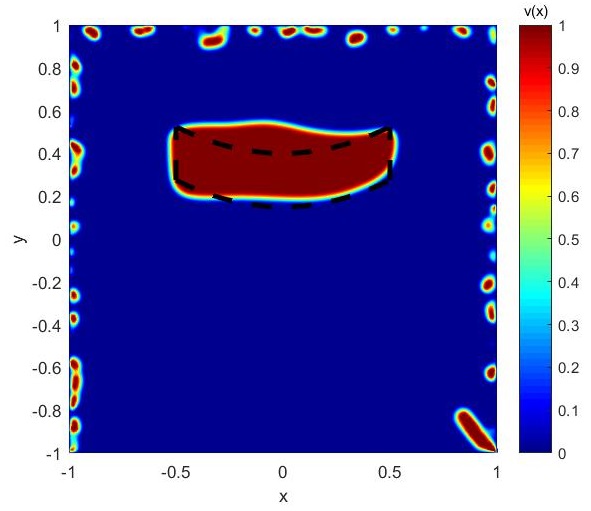}
    \caption{Noise $5\%$. Final iteration at $n=26727$. $n_{ref}=3000$, $\gamma=10^{-1}$, and $\tau_n=10^{-4}$. $(\mu,\lambda)=(0.5,1)$.}
    \label{fig:test7b}
    \end{subfigure}
\caption{Test 7. Reconstruction of a non-convex domain with noise in the measurements.}
\label{fig:Test7}    
\end{figure}
{In Figure \ref{fig:Test8}, we finally provide a numerical experiment for a comparison between the results given by $J^{sum}_{\delta,\varepsilon}$, as defined in \eqref{eq:min_prob_more_meas}, and the misfit functional studied in \cite{AspBerCavRocVer22} (see the section titled ``Numerical Examples''), which is, in the notation adopted in this paper, equal to
\begin{equation}\label{eq:misifit functional}
J^{misfit}_{\delta,\varepsilon}(v) := \frac{1}{N_m}\sum_{i=1}^{N_m}\left( \frac12 \|u^{\delta}_{N,i}(v)-f_{i}^{meas}\|_{L^2(\Sigma_N)}^2\right)
+ \gamma \!\int_{\Omega}\Big( \varepsilon|\nabla v|^2 + \frac{1}{\varepsilon}v(1-v)\Big)
\end{equation}
where $u^{\delta}_{N,i}$, for $i=1,\ldots,N_m$, are solutions to \eqref{pb:neumann_incl} with $g=g_i$. To compare the numerical outcomes of the two functionals, we use the numerical setting proposed in Figure \ref{fig:test5a}.}
\begin{figure}[h!]
    \centering
    \begin{subfigure}{0.3\textwidth}
    \centering
    \includegraphics[width=\textwidth]{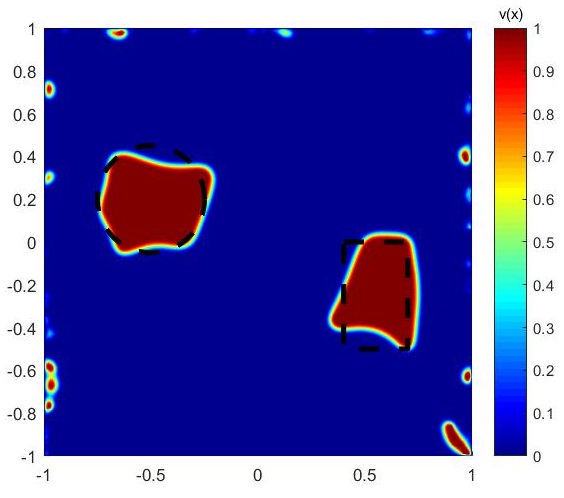}
    \caption{{Result using functional \eqref{eq:min_prob_more_meas}. Noise $5\%$. \\
    Final iteration at $n=35318$. \\ $n_{ref}=3000$, $\gamma=10^{-1}$, and $\tau_n=10^{-4}$. $(\mu,\lambda)=(0.5,1)$.}}
    \label{fig:test8a}
    \end{subfigure}
   \hfill
    \begin{subfigure}{0.3\textwidth}
    \centering
    \includegraphics[width=\textwidth]{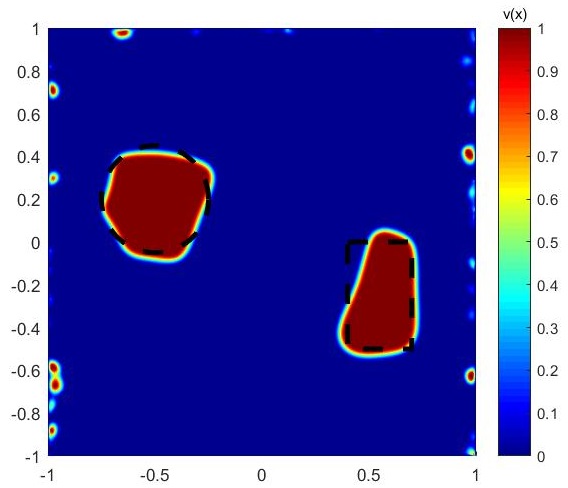}
    \caption{{Result using functional \eqref{eq:min_prob_more_meas}. Noise $5\%$. \\
    Final iteration at $n=26394$.\\ $n_{ref}=3000$,
    $\gamma=10^{-1}$, and $\tau_n=10^{-3}$. $(\mu,\lambda)=(0.5,1)$.}}
    \label{fig:test8b}
    \end{subfigure}
    \hfill \ \\
    \begin{subfigure}{0.3\textwidth}
    \centering
    \includegraphics[width=\textwidth]{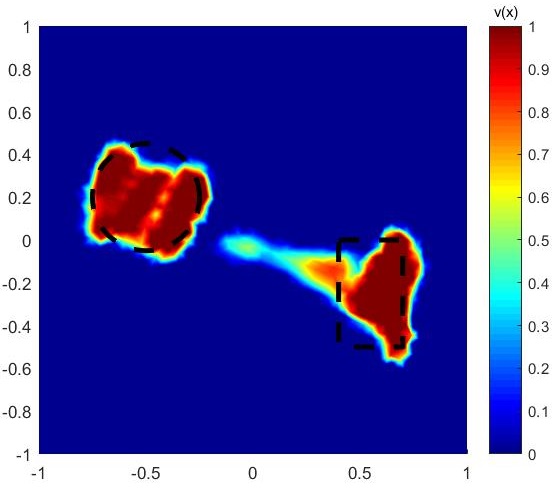}
    \caption{{Result using functional \eqref{eq:misifit functional}. Noise $5\%$. \\ 
    Final iteration at $n=28251$.\\ $n_{ref}=3000$,
    $\gamma=10^{-2}$, \\ $\tau_n=10^{-4}$. $(\mu,\lambda)=(0.5,1)$.}}
    \label{fig:test8c}
    \end{subfigure}
    \hfill
    \begin{subfigure}{0.3\textwidth}
    \centering
    \includegraphics[width=\textwidth]{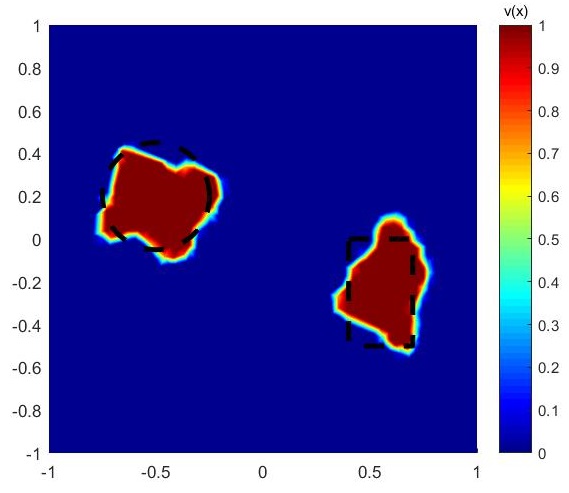}
    \caption{{Result using functional \eqref{eq:misifit functional}. Noise $5\%$. \\ 
    Final iteration at $n=15566$.\\ $n_{ref}=3000$,
    $\gamma=10^{-2}$, and $\tau_n=10^{-3}$. $(\mu,\lambda)=(0.5,1)$.}}
    \label{fig:test8e}
    \end{subfigure}
        \hfill
    \begin{subfigure}{0.3\textwidth}
    \centering
        \includegraphics[width=\textwidth]{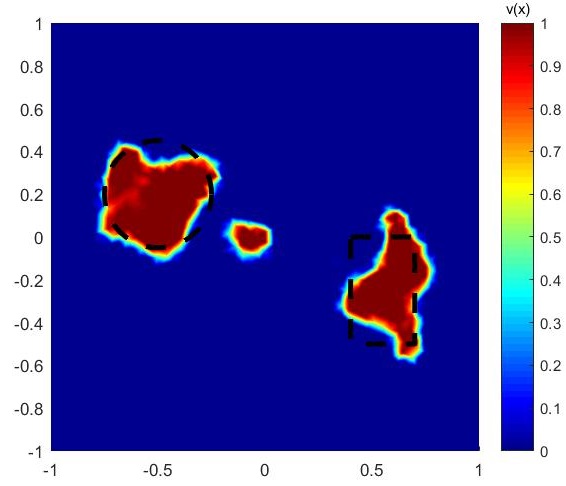}
    \caption{{Result using functional \eqref{eq:misifit functional}. Noise $5\%$. \\ 
    Final iteration at $n=26165$.\\ $n_{ref}=3000$,
    $\gamma=5\times 10^{-3}$, \\ $\tau_n=10^{-4}$. $(\mu,\lambda)=(0.5,1)$.}}
    \label{fig:test8d}
    \end{subfigure}
\caption{{Test 8. Comparison of the numerical outcomes given by the use of the functionals \eqref{eq:min_prob_more_meas} (first line in the figure) and \eqref{eq:misifit functional} (second line in the figure) tuning the values of the regularization parameter $\gamma$ and the time step $\tau_n$.}}
\label{fig:Test8}    
\end{figure}
\section{Conclusions}\label{sec:conclusions}
In this paper we have introduced a phase-field approach for a Kohn-Vogelius type functional for the reconstruction of cavities. This type of functionals is typically used in the implementation of reconstruction algorithms for the identification of defects (cavities, inclusions, cracks) embedded in a domain via shape derivative and topological derivate tools (see the introduction, Section \ref{sec:introduction}, for some literature on the topic).\\
Numerical results of our approach show a robust, efficient, {and promising} algorithm, at least in the case of convex domains. {In fact, a comparison between the misfit functional, defined in \eqref{eq:misifit functional} and studied in \cite{AspBerCavRocVer22}, and the Kohn-Vogelius type functional \eqref{eq:min_prob_more_meas} seems to show moderately better results in the case of the regularized Kohn-Vogelius type functional (see Figure \ref{fig:Test8}) in the presence of multiple inclusions. However, it should also be noted that the Kohn-Vogelius functional provides reconstructions with more artifacts around the boundary of the domain compared to the misfit functional \eqref{eq:misifit functional}. The numerical outcomes in the case of one single inclusion, such as a circle, an ellipse, or a rectangle, are equivalent for the two functionals.}  For non-convex domains it is necessary to introduce some modifications in the perimeter functional which are able to mimic the non-convexity of the domain in order to get better numerical results. From the analytical point of view, it is open the problem of proving that the minima of the relaxed functional $J_{\delta,\varepsilon}$ converge to those of the functional $J_{reg}$ through, for example, the $\Gamma$-convergence theory. 
{Moreover, in order to make the problem closer to possible applications, it would be interesting to consider, both in the analytical and the numerical framework, the case where there is an uncertainty on the knowledge of the material property, introducing, for example, some noise in the Lam\'e parameters.}

\section*{Acknowledgments}
The author is a member of GNAMPA (Gruppo Nazionale per l’Analisi Matematica, la Probabilità e le loro Applicazioni) of INdAM (Istituto Nazionale
di Alta Matematica). {This research has been performed in the framework of the MIUR-PRIN Grant 2020F3NCPX
``Mathematics for industry 4.0 (Math4I4)''}.
The author deeply thanks E. Beretta and E. Rocca for introducing him into the very interesting world of phase field methods.

\bibliographystyle{plain}
\bibliography{references.bib}

\begin{thebibliography}{10}

\bibitem{Alb96}
G.~Alberti.
\newblock Variational models for phase transitions, an approach via
  {$\Gamma$}-convergence.
\newblock In {\em Calculus of variations and partial differential equations
  ({P}isa, 1996)}, pages 95--114. Springer, Berlin, 2000.

\bibitem{AleMorRos08}
G.~Alessandrini, A.~Morassi, and E.~Rosset.
\newblock The linear constraints in {P}oincar\'{e} and {K}orn type
  inequalities.
\newblock {\em Forum Math.}, 20(3):557--569, 2008.

\bibitem{AS21}
S.~Almi and U.~Stefanelli.
\newblock Topology optimization for incremental elastoplasticity: a phase-field
  approach.
\newblock {\em SIAM J. Control Optim.}, 59(1):339--364, 2021.

\bibitem{AFP2000}
L.~Ambrosio, N.~Fusco, and D.~Pallara.
\newblock {\em Functions of bounded variation and free discontinuity problems}.
\newblock Oxford Mathematical Monographs. The Clarendon Press, Oxford
  University Press, New York, 2000.

\bibitem{AmeBurHac07}
H.B. Ameur, M.~Burger, and B.~Hackl.
\newblock Cavity identification in linear elasticity and thermoelasticity.
\newblock {\em Math. Methods Appl. Sci.}, 30(6):625--647, 2007.

\bibitem{AmmBreEliGarJossKan15}
H.~Ammari, E.~Bretin, J.~Garnier, H.~Kang, H.~Lee, and A.~Wahab.
\newblock {\em Mathematical methods in elasticity imaging}.
\newblock Princeton Series in Applied Mathematics. Princeton University Press,
  Princeton, NJ, 2015.

\bibitem{Ammari}
H.~Ammari, H.~Kang, G.~Nakamura, and K.~Tanuma.
\newblock Complete asymptotic expansions of solutions of the system of
  elastostatics in the presence of an inclusion of small diameter and detection
  of an inclusion.
\newblock {\em J. Elasticity}, 67(2):97--129 (2003), 2002.

\bibitem{AspBerCavRocVer22}
A.~Aspri, E.~Beretta, C.~Cavaterra, E.~Rocca, and M.~Verani.
\newblock Identification of cavities and inclusions in linear elasticity with a
  phase-field approach.
\newblock {\em https://arxiv.org/abs/2201.06554}, 2022.

\bibitem{ABR18}
A.~Aspri, E.~Beretta, and E.~Rosset.
\newblock On an elastic model arising from volcanology: an analysis of the
  direct and inverse problem.
\newblock {\em J. Differential Equations}, 265(12):6400--6423, 2018.

\bibitem{AspBerSchMus20}
A.~Aspri, E.~Beretta, O.~Scherzer, and M.~Muszkieta.
\newblock Asymptotic expansions for higher order elliptic equations with an
  application to quantitative photoacoustic tomography.
\newblock {\em SIAM J. Imaging Sci.}, 13(4):1781--1833, 2020.

\bibitem{ABCHDRR20}
F.~Auricchio, E.~Bonetti, M.~Carraturo, D.~H\"{o}mberg, A.~Reali, and E.~Rocca.
\newblock A phase-field-based graded-material topology optimization with stress
  constraint.
\newblock {\em Math. Models Methods Appl. Sci.}, 30(8):1461--1483, 2020.

\bibitem{BelMef13}
Z.~Belhachmi and H.~Meftahi.
\newblock Shape sensitivity analysis for an interface problem via minimax
  differentiability.
\newblock {\em Appl. Math. Comput.}, 219(12):6828--6842, 2013.

\bibitem{BenJaiKhaZin17}
A.~Ben~Abda, E.~Ja\"{\i}em, S.~Khalfallah, and A.~Zine.
\newblock An energy gap functional: cavity identification in linear elasticity.
\newblock {\em J. Inverse Ill-Posed Probl.}, 25(5):573--595, 2017.

\bibitem{BCP2021}
E.~Beretta, M.C. Cerutti, and D.~Pierotti.
\newblock Detection of cavities in a nonlinear model arising from cardiac
  electrophysiology via $\gamma$-convergence.
\newblock {\em arXiv 2106.04213}, 2021.

\bibitem{BerRatVer18}
E.~Beretta, L.~Ratti, and M.~Verani.
\newblock Detection of conductivity inclusions in a semilinear elliptic problem
  arising from cardiac electrophysiology.
\newblock {\em Commun. Math. Sci.}, 16(7):1975--2002, 2018.

\bibitem{BGHFS14}
L.~Blank, H.~Garcke, M.H. Farshbaf-Shaker, and V.~Styles.
\newblock Relating phase field and sharp interface approaches to structural
  topology optimization.
\newblock {\em ESAIM Control Optim. Calc. Var.}, 20(4):1025--1058, 2014.

\bibitem{BGHHR16}
L.~Blank, H.~Garcke, C.~Hecht, and C.~Rupprecht.
\newblock Sharp interface limit for a phase field model in structural
  optimization.
\newblock {\em SIAM J. Control Optim.}, 54(3):1558--1584, 2016.

\bibitem{BonCavFreRiv21}
E.~Bonetti, C.~Cavaterra, F.~Freddi, and F.~Riva.
\newblock On a phase-field model of damage for hybrid laminates with cohesive
  interface.
\newblock {\em Accepted publication}, https://doi.org/10.1002/mma.7999, 2021.

\bibitem{BC}
M.~Bonnet and A.~Constantinescu.
\newblock Inverse problems in elasticity.
\newblock {\em Inverse Problems}, 21(2):R1--R50, 2005.

\bibitem{BoucPeichSayeTouz17}
F.~Bouchon, G.~H. Peichl, M.~Sayeh, and R.~Touzani.
\newblock A free boundary problem for the {S}tokes equations.
\newblock {\em ESAIM Control Optim. Calc. Var.}, 23(1):195--215, 2017.

\bibitem{BouCha03}
B.~Bourdin and A.~Chambolle.
\newblock Design-dependent loads in topology optimization.
\newblock {\em ESAIM Control Optim. Calc. Var.}, 9:19--48, 2003.

\bibitem{BouMulSahTan21}
D.~P. Bourne, A.~J. Mulholland, S.~Sahu, and K.~M.~M. Tant.
\newblock An inverse problem for {V}oronoi diagrams: a simplified model of
  non-destructive testing with ultrasonic arrays.
\newblock {\em Math. Methods Appl. Sci.}, 44(5):3727--3745, 2021.

\bibitem{BucBut05}
D.~Bucur and G.~Buttazzo.
\newblock {\em Variational methods in shape optimization problems}, volume~65
  of {\em Progress in Nonlinear Differential Equations and their Applications}.
\newblock Birkh\"{a}user Boston, Inc., Boston, MA, 2005.

\bibitem{BHSZ01}
D.~Bucur, A.~Henrot, J.~Soko\l{}owski, and A.~\.{Z}ochowski.
\newblock Continuity of the elasticity system solutions with respect to the
  geometrical domain variations.
\newblock {\em Adv. Math. Sci. Appl.}, 11(1):57--73, 2001.

\bibitem{CarRap08}
A.~Carpio and M.L. Rap\'{u}n.
\newblock Topological derivatives for shape reconstruction.
\newblock In {\em Inverse problems and imaging}, volume 1943 of {\em Lecture
  Notes in Math.}, pages 85--133. Springer, Berlin, 2008.

\bibitem{CRBHRA19}
M.~Carraturo, E.~Rocca, E.~Bonetti, D.~H\"{o}mberg, A.~Reali, and F.~Auricchio.
\newblock Graded-material design based on phase-field and topology
  optimization.
\newblock {\em Comput. Mech.}, 64(6):1589--1600, 2019.

\bibitem{CauConGod16}
F.~Caubet, C.~Conca, and M.~Godoy.
\newblock On the detection of several obstacles in 2{D} {S}tokes flow:
  topological sensitivity and combination with shape derivatives.
\newblock {\em Inverse Probl. Imaging}, 10(2):327--367, 2016.

\bibitem{CauDamKatTim13}
F.~Caubet, M.~Dambrine, D.~Kateb, and C.~Z. Timimoun.
\newblock A {K}ohn-{V}ogelius formulation to detect an obstacle immersed in a
  fluid.
\newblock {\em Inverse Probl. Imaging}, 7(1):123--157, 2013.

\bibitem{Che75}
D.~Chenais.
\newblock On the existence of a solution in a domain identification problem.
\newblock {\em J. Math. Anal. Appl.}, 52(2):189--219, 1975.

\bibitem{DalMaso93}
G.~Dal~Maso.
\newblock {\em An introduction to {$\Gamma$}-convergence}, volume~8 of {\em
  Progress in Nonlinear Differential Equations and their Applications}.
\newblock Birkh\"{a}user Boston, Inc., Boston, MA, 1993.

\bibitem{DamHarPui19}
M.~Dambrine, H.~Harbrecht, and B.~Puig.
\newblock Incorporating knowledge on the measurement noise in electrical
  impedance tomography.
\newblock {\em ESAIM Control Optim. Calc. Var.}, 25:Paper No. 84, 16, 2019.

\bibitem{DeFLes15}
J.~Rocha de~Faria and D.~Lesnic.
\newblock Topological derivative for the inverse conductivity problem: a
  {B}ayesian approach.
\newblock {\em J. Sci. Comput.}, 63(1):256--278, 2015.

\bibitem{DecEllSty16}
K.~Deckelnick, C.M. Elliott, and V.~Styles.
\newblock Double obstacle phase field approach to an inverse problem for a
  discontinuous diffusion coefficient.
\newblock {\em Inverse Problems}, 32(4):045008, 26, 2016.

\bibitem{DizLauBen16}
F.~B.~Djupkep Dizeu, Denis Laurendeau, and Abdelhakim Bendada.
\newblock Non-destructive testing of objects of complex shape using infrared
  thermography: rear surface reconstruction by temporal tracking of the thermal
  front.
\newblock {\em Inverse Problems}, 32(12):125007, 20, 2016.

\bibitem{DouCara20}
A.~Doubova and E.~Fern\'{a}ndez-Cara.
\newblock Some geometric inverse problems for the {L}am\'{e} system with
  applications in elastography.
\newblock {\em Appl. Math. Optim.}, 82(1):1--21, 2020.

\bibitem{EbeHar21}
S.~Eberle and B.~Harrach.
\newblock Shape reconstruction in linear elasticity: standard and linearized
  monotonicity method.
\newblock {\em Inverse Problems}, 37(4):045006, 27, 2021.

\bibitem{EbeHarMefRez21}
S.~Eberle, B.~Harrach, H.~Meftahi, and T.~Rezgui.
\newblock Lipschitz stability estimate and reconstruction of {L}am\'{e}
  parameters in linear elasticity.
\newblock {\em Inverse Probl. Sci. Eng.}, 29(3):396--417, 2021.

\bibitem{EilUrb18}
H.~Eiliat and J.~Urbanic.
\newblock Visualizing, analyzing, and managing voids in the material extrusion
  process.
\newblock {\em Int J Adv Manuf Technol}, 96:4095--4109, 2018.

\bibitem{EvaGar15}
L.C. Evans and R.F. Gariepy.
\newblock {\em Measure theory and fine properties of functions}.
\newblock Textbooks in Mathematics. CRC Press, Boca Raton, FL, revised edition,
  2015.

\bibitem{GHHHK15}
H.~Garcke, C.~Hecht, M.~Hinze, and C.~Kahle.
\newblock Numerical approximation of phase field based shape and topology
  optimization for fluids.
\newblock {\em SIAM J. Sci. Comput.}, 37(4):A1846--A1871, 2015.

\bibitem{GLNS21}
H.~Garcke, K.~Lam~Fong, R.~N\"{u}rnberg, and A.~Signori.
\newblock Overhang penalization in additive manufacturing via phase field
  structural topology optimization with anisotropic energies.
\newblock {\em https://arxiv.org/pdf/2111.14070}, 2021.

\bibitem{GheHas21}
E.~Ghezaiel and M.~Hassine.
\newblock Topological asymptotic expansion for a thermal problem.
\newblock {\em Appl. Math. Optim.}, 84(1):955--995, 2021.

\bibitem{Gia04}
A~Giacomini.
\newblock A stability result for {N}eumann problems in dimension {$N\geq3$}.
\newblock {\em J. Convex Anal.}, 11(1):41--58, 2004.

\bibitem{Gri11}
P.~Grisvard.
\newblock {\em Elliptic problems in nonsmooth domains}, volume~69 of {\em
  Classics in Applied Mathematics}.
\newblock Society for Industrial and Applied Mathematics (SIAM), Philadelphia,
  PA, 2011.
\newblock Reprint of the 1985 original [ MR0775683], With a foreword by Susanne
  C. Brenner.

\bibitem{HePen19}
X.~He and P.~Yang.
\newblock The primal-dual active set method for a class of nonlinear problems
  with {$T$}-monotone operators.
\newblock {\em Math. Probl. Eng.}, pages Art. ID 2912301, 8, 2019.

\bibitem{Hecht12}
F.~Hecht.
\newblock New development in freefem++.
\newblock {\em J. Numer. Math.}, 20(3-4):251--265, 2012.

\bibitem{HenPie18}
A.~Henrot and M.~Pierre.
\newblock {\em Shape variation and optimization}, volume~28 of {\em EMS Tracts
  in Mathematics}.
\newblock European Mathematical Society (EMS), Z\"{u}rich, 2018.
\newblock A geometrical analysis, English version of the French publication [
  MR2512810] with additions and updates.

\bibitem{HintItoKun03}
M.~Hinterm\"{u}ller, K.~Ito, and K.~Kunisch.
\newblock The primal-dual active set strategy as a semismooth {N}ewton method.
\newblock {\em SIAM J. Optim.}, 13(3):865--888 (2003), 2002.

\bibitem{HriHasAbdCho19}
M.~Hrizi, M.~Hassine, M.~Abdelwahed, and N.~Chorfi.
\newblock Fast and accurate algorithm for cavities reconstruction in an
  elasticity problem.
\newblock {\em Math. Methods Appl. Sci.}, 42(18):6083--6100, 2019.

\bibitem{Ikehata11}
M~Ikehata and H.~Itou.
\newblock On reconstruction of an unknown polygonal cavity in a linearized
  elasticity with one measurement.
\newblock {\em Journal of Physics: Conference Series}, 290:012005, apr 2011.

\bibitem{Ikehata12}
M.~Ikehata and H.~Itou.
\newblock On reconstruction of a cavity in a linearized viscoelastic body from
  infinitely many transient boundary data.
\newblock {\em Inverse Problems}, 28(12):125003, nov 2012.

\bibitem{JavHol19}
A.~Javaherian and S.~Holman.
\newblock Direct quantitative photoacoustic tomography for realistic acoustic
  media.
\newblock {\em Inverse Problems}, 35(8):084004, 39, 2019.

\bibitem{Kal18}
B.~Kaltenbacher.
\newblock Minimization based formulations of inverse problems and their
  regularization.
\newblock {\em SIAM J. Optim.}, 28(1):620--645, 2018.

\bibitem{Kang03}
H.~Kang, E.~Kim, and J.-Y. Lee.
\newblock Identification of elastic inclusions and elastic moment tensors by
  boundary measurements.
\newblock {\em Inverse Problems}, 19(3):703--724, 2003.

\bibitem{Lesnic}
A.~Karageorghis, D.~Lesnic, and L.~Marin.
\newblock The method of fundamental solutions for the detection of rigid
  inclusions and cavities in plane linear elastic bodies.
\newblock {\em Computers \& Structures}, 106, 2012.

\bibitem{KohVog87}
R.~V. Kohn and M.~Vogelius.
\newblock Relaxation of a variational method for impedance computed tomography.
\newblock {\em Comm. Pure Appl. Math.}, 40(6):745--777, 1987.

\bibitem{KurMarIya17}
T.~Kurahashi, K.~Maruoka, and T.~Iyama.
\newblock Numerical shape identification of cavity in three dimensions based on
  thermal non-destructive testing data.
\newblock {\em Engineering Optimization}, 49(3):434--448, 2017.

\bibitem{LamYou20}
K.F. Lam and I.~Yousept.
\newblock Consistency of a phase field regularisation for an inverse problem
  governed by a quasilinear {M}axwell system.
\newblock {\em Inverse Problems}, 36(4):045011, 33, 2020.

\bibitem{Lang19}
O.~Lang, P.~Kov\'{a}cs, C.~Motz, M.~Huemer, T.~Berer, and P.~Burgholzer.
\newblock A linear state space model for photoacoustic imaging in an acoustic
  attenuating media.
\newblock {\em Inverse Problems}, 35(1):015003, 29, 2019.

\bibitem{CasFarGal12}
A.E. Martínez-Castro, I.H. Faris, and R.~Gallego.
\newblock Identification of cavities in a three-dimensional layer by
  minimization of an optimal cost functional expansion.
\newblock {\em Computer Modeling in Engineering \& Sciences}, 87(3):177--206,
  2012.

\bibitem{MefZol15}
H.~Meftahi and J.-P. Zol\'{e}sio.
\newblock Sensitivity analysis for some inverse problems in linear elasticity
  via minimax differentiability.
\newblock {\em Appl. Math. Model.}, 39(5-6):1554--1576, 2015.

\bibitem{Boch19}
B.~M\'{e}jri.
\newblock Shape sensitivity analysis for identification of voids under
  {N}avier's boundary conditions in linear elasticity.
\newblock {\em J. Inverse Ill-Posed Probl.}, 27(3):385--400, 2019.

\bibitem{MeRon13}
G.~Menegatti and L.~Rondi.
\newblock Stability for the acoustic scattering problem for sound-hard
  scatterers.
\newblock {\em Inverse Probl. Imaging}, 7(4):1307--1329, 2013.

\bibitem{MenHriNov21}
P.~Menoret, M.~Hrizi, and A.~A. Novotny.
\newblock On the {K}ohn-{V}ogelius formulation for solving an inverse source
  problem.
\newblock {\em Inverse Probl. Sci. Eng.}, 29(1):56--72, 2021.

\bibitem{Mod87}
L.~Modica.
\newblock The gradient theory of phase transitions and the minimal interface
  criterion.
\newblock {\em Arch. Rational Mech. Anal.}, 98(2):123--142, 1987.

\bibitem{MorRos03}
A.~Morassi and E.~Rosset.
\newblock Detecting rigid inclusions, or cavities, in an elastic body.
\newblock {\em J. Elasticity}, 73(1-3):101--126 (2004), 2003.

\bibitem{MorRos04}
A.~Morassi and E.~Rosset.
\newblock Stable determination of cavities in elastic bodies.
\newblock {\em Inverse Problems}, 20(2):453--480, 2004.

\bibitem{MR16}
A.~Morassi and E.~Rosset.
\newblock Stable determination of an inclusion in an inhomogeneous elastic body
  by boundary measurements.
\newblock {\em Rend. Istit. Mat. Univ. Trieste}, 48:101--120, 2016.

\bibitem{NgoKasImbNguHui18}
T.D. Ngo, A.~Kashani, G.~Imbalzano, K.T.Q. Nguyen, and D.~Hui.
\newblock Additive manufacturing (3d printing): A review of materials, methods,
  applications and challenges.
\newblock {\em Composites Part B: Engineering}, 143:172--196, 2018.

\bibitem{RinRon11}
W.~Ring and L.~Rondi.
\newblock Reconstruction of cracks and material losses by perimeter-like
  penalizations and phase-field methods: numerical results.
\newblock {\em Interfaces Free Bound.}, 13(3):353--371, 2011.

\bibitem{Rondi11}
L.~Rondi.
\newblock Reconstruction of material losses by perimeter penalization and
  phase-field methods.
\newblock {\em J. Differential Equations}, 251(1):150--175, 2011.

\bibitem{TroWelElv18}
S.A. Tronvoll, T.~Welo, and C.W. Elverum.
\newblock The effects of voids on structural properties of fused deposition
  modelled parts: a probabilistic approach.
\newblock {\em The International Journal of Advanced Manufacturing Technology},
  97(9):3607--3618, Aug 2018.

\end{thebibliography}

\end{document}